\newcolumntype{.}{D{.}{.}{-1}}
\newcolumntype{d}[1]{D{.}{\cdot}{#1}}
\definecolor{mygray}{rgb}{0.9,0.9,0.9}
\newlist{enumerate_noindent_newline}{enumerate*}{1}
\setlist[enumerate_noindent_newline]{label=(\roman*),itemjoin=\\}
\newlist{enumerate_noindent_nonewline}{enumerate*}{1}
\setlist[enumerate_noindent_nonewline]{label=(\roman*),itemjoin=\,}
\newtheorem{assumption}{Assumption}
\newtheorem{theorem}{Theorem}
\newtheorem{lemma}{Lemma}
\newtheorem{proposition}{Proposition}
\newtheorem{corollary}{Corollary}
\newtheorem{example}{Example}
\newtheorem{remark}{Remark}
\crefname{assumption}{Assumption}{Assumptions}
\Crefname{assumption}{Assumption}{Assumptions}
\crefname{example}{Example}{Examples}
\Crefname{example}{Example}{Examples}
\crefname{condition}{Condition}{Conditions}
\Crefname{condition}{Condition}{Conditions}
\crefname{theorem}{Theorem}{Theorems}
\Crefname{theorem}{Theorem}{Theorems}
\crefname{proposition}{Proposition}{Propositions}
\Crefname{proposition}{Proposition}{Propositions}
\crefname{corollary}{Corollary}{Corollaries}
\Crefname{corollary}{Corollary}{Corollaries}
\DeclareMathOperator*{\Var}{\mathbb{V}}
\newcommand{\E}{\mathbb{E}}
\newcommand{\bbP}{\mathbb{P}}
\newcommand{\prob}[1]{\mathbb{P}\left(#1\right)}
\newcommand{\probc}[2]{\prob{\left. #1\,\right\vert\, #2}}
\newcommand{\probcs}[3]{\mathbb{P}_{#1}\left(\left. #2\,\right\vert\, #3\right)}
\newcommand{\Eb}[1]{\E\left[#1\right]}
\newcommand{\Ebc}[2]{\Eb{\left. #1\,\right\vert\, #2}}
\newcommand{\Ebcs}[3]{{\E}_{#1}\left[\left. #2\,\right\vert\, #3\right]}
\newcommand{\Vb}[1]{\Var\left[#1\right]}
\newcommand{\Vbc}[2]{\Var\left[\left. #1\,\right\vert\, #2\right]}
\newcommand{\os}[2]{\overset{#1}{#2}}
\newcommand{\mypm}{\mathbin{\mathpalette\@mypm\relax}}
\newcommand{\@mypm}[2]{\ooalign{%
  \raisebox{.1\height}{$#1+$}\cr
  \smash{\raisebox{-.6\height}{$#1-$}}\cr}}
\newcommand{\mymp}{\mathbin{\mathpalette\@mymp\relax}}
\newcommand{\@mymp}[2]{\ooalign{%
  \raisebox{.5\height}{$#1-$}\cr
  \smash{\raisebox{-.2\height}{$#1+$}}\cr}}
\newcommand{\pushright}[1]{\ifmeasuring@#1\else\omit\hfill$\displaystyle#1$\fi\ignorespaces}
\newcommand{\pushleft}[1]{\ifmeasuring@#1\else\omit$\displaystyle#1$\hfill\fi\ignorespaces}
\newcommand{\tld}[1]{\tilde{#1}}
\newcommand{\fr}[2]{\frac{#1}{#2}}
\newcommand{\ba}[1]{\bar{#1}}
\newcommand{\norm}[2]{\lVert{#2}\rVert_{#1}}
\newcommand{\normu}[1]{\lVert{#1}\rVert}
\newcommand{\supnorm}[1]{\left\lVert{#1}\right\rVert_{\infty}}
\newcommand{\deriv}{\mathrm{d}}
\newcommand{\Deriv}{\mathrm{D}}
\newcommand{\bigO}[1]{O\left(#1\right)}
\newcommand{\bigOP}[1]{O_P\left(#1\right)}
\newcommand{\smallO}[1]{o\left(#1\right)}
\newcommand{\smallOP}[1]{o_P\left(#1\right)}
\newcommand{\indep}{\rotatebox[origin=c]{90}{$\models$}}
\renewcommand{\indep}{\!\perp\!\!\!\perp}
\newcommand{\rsquig}{\rightsquigarrow}
\newcommand{\convprob}{\overset{P}{\longrightarrow}}
\newcommand{\convas}{\overset{a.s.}{\longrightarrow}}
\newcommand{\ceq}{\coloneqq}  
\newcommand{\eqc}{\eqqcolon} 
\newcommand{\indic}[1]{\mathbbm{1}_{#1}}
\newcommand{\indichat}[1]{\hat{\mathbbm{1}}_{#1}}
\newcommand{\abs}[1]{\bigg\vert #1 \bigg\vert}
\newcommand{\filt}{\mathcal{F}}
\newcommand{\set}[1]{\left\{#1\right\}}
\newcommand{\sumn}[1]{\sum_{#1\in[n]}}
\newcommand{\nsumn}[1]{\fr{1}{n}\sum_{#1\in[n]}}
\newcommand{\gproc}{\mathbb{G}}
\newcommand{\real}{\mathbb{R}}
\newcommand{\dummy}{\{0,1\}}
\newcommand{\bin}{\mathbb{B}}
\newcommand{\taut}{\tau_{\mathrm{t}}}
\newcommand{\calJ}{\mathcal{J}}
\newcommand{\calJC}{\calJ}
\newcommand{\hatth}{\hat\theta}
\newcommand{\thatpi}{\hat\tau_\pi}
\newcommand{\tthatpi}{\hat{\tau}_{\mathrm{t},\pi}}
\newcommand{\thatpihat}{\hat\tau_{\hat\pi}}
\newcommand{\tthatpihat}{\hat\tau_{\mathrm{t},\hat\pi}}
\newcommand{\hdotk}{\dot{h}_{k}}
\newcommand{\qdotk}{\dot{q}_{\mu,k}}
\newcommand{\hdotkhat}{\hat{\dot{h}}_k}
\newcommand{\qdotkhat}{\hat{\dot{q}}_{\mu,k}}
\newcommand{\eCDO}{E^\textsf{CDO}}
\newcommand{\uphatth}{\underline p_{\hatth}}
\newcommand{\bphatth}{\bar p_{\hatth}}
\newcommand{\uthatth}{\underline t_{\hatth}}
\newcommand{\bthatth}{\bar t_{\hatth}}
\newcommand{\maxminDelta}{\overline{\underline{\Delta}}_n}
\newcommand{\maxminDeltahat}{\widehat{\overline{\underline{\Delta}}}_n}
\newcommand{\minDelta}{\underline{\Delta}}
\newcommand{\Xdensity}{\Psi}
\newcommand{\ginv}{g^{-1}}
\newcommand{\Xsupp}{\mathcal{X}}
\newcommand{\Pisupp}{[\underline p, \bar p]}
\newcommand{\ntheta}{\textup{Nb}(\theta_0,\epsilon)}
\newcommand{\caliperDelta}{\maxminDelta\vee \fr{\log N_0}{N_0+1} \vee \fr{\log N_1}{N_1+1}}
\newcommand{\caliperDeltahat}{\maxminDeltahat\vee \fr{\log N_0}{N_0+1} \vee \fr{\log N_1}{N_1+1}}
\newcommand{\caliperlognn}{s\fr{\log n}{n}}
\newcommand{\Dvector}{D^{(n)}}
\newcommand{\Pvector}{PS^{(n)}}
\newcommand{\ntruncate}{\hat N}
\begin{document}

\title{Asymptotics of Caliper Matching Estimators for Average Treatment Effects}
\author[1]{M\'at\'e Kormos}
\author[2]{St\'ephanie van der Pas}
\author[1]{Aad van der Vaart}

\affil[1]{\footnotesize{Delft Institute of Applied Mathematics, Delft University of Technology, Mekelweg 4, Delft, The Netherlands}}
\affil[2]{\footnotesize{Epidemiology and Data Science, Amsterdam UMC location Vrije Universiteit Amsterdam, De Boelelaan 1117, Amsterdam, The Netherlands and Amsterdam Public Health Methodology, Amsterdam, The Netherlands}}
\date{ }

\begin{titlepage}
\maketitle
\thispagestyle{empty}

%
%
%
%

%
\begin{abstract}
Caliper matching is used to estimate causal effects of a binary treatment from observational data by comparing matched treated and control units. Units are matched when their propensity scores, the conditional probability of receiving treatment given pretreatment covariates, are within a certain distance called caliper.  So far, theoretical results on caliper matching are lacking, leaving practitioners with ad-hoc caliper choices and inference procedures. We bridge this gap by proposing a caliper that balances the quality and the number of matches. We prove that the resulting estimator of the average treatment effect, and average treatment effect on the treated, is asymptotically unbiased and normal at parametric rate. 
We describe the conditions under which semiparametric efficiency is obtainable, and show that when the parametric propensity score is estimated, the variance is increased for both estimands. Finally, we construct asymptotic confidence intervals for the two estimands.
\end{abstract}

\vfill

\scriptsize{\noindent M\'{a}t\'{e} Kormos thanks \href{https://www.hongdeng-hd.com}{Hong Deng} for her help and \href{https://sites.google.com/view/lisavoois}{Lisa Voois} for suggestions (both of them are from Tinbergen Institute and Erasmus School of Economics, Erasmus University Rotterdam). St\'{e}phanie van der Pas acknowledges that the research leading to these results has received funding from the Dutch Research Council (NWO) under grant VI.Veni.192.087. This research was also partially funded by a Spinoza grant from NWO.}

\end{titlepage}





\pagestyle{fancy}
\rhead{Kormos, Van der Pas, Van der Vaart}
\lhead{Caliper Matching}


\section{Introduction}
\noindent Matching is applied in empirical studies to estimate the causal effect of a binary treatment from observational data. The estimate is the mean difference in the outcome of interest of matched treated and control units. Matches may be formed in various ways. We consider matching on the propensity score, the conditional probability of receiving treatment given the observed pretreatment covariates \citep{rosenbaum_central_1983}. Specifically, we consider caliper matching, where a treated and a control unit are matched if their propensity scores are within a certain distance called \emph{caliper} \citep{cochran_controlling_1973,dehejia_propensity_1998}. Caliper matching is applied in empirical research such as labour \citep{dehejia_propensity_2002,huber_workplace_2015} and health economics \citep{erhardt_microfinance_2017,salmasi_immigration_2015,keng_effect_2013}, policy evaluation \citep{bannor_adoption_2020,patel-campillo_breaking_2022}, business and finance \citep{shen_ambition_2009,heese_is_2017} as well as healthcare \citep{capogrossi_influence_2017,cho_effects_2018,vecchio_effect_2018,izudi_does_2019,wang_mental_2020,brenna_should_2021,krishnamoorthy_impact_2022}. Nonetheless, no rigorous results have been established on the choice of the caliper and the limiting distribution of the estimator.

Our contribution is a theory driven caliper choice, the derivation of the asymptotic distribution of the caliper matching estimator based on propensity scores, and the construction of asymptotic confidence intervals. We consider the estimation of the Average Treatment Effect (ATE) and the Average Treatment Effect on the Treated (ATT). We show that when the order of the caliper decreases at the right speed as the sample size $n$ increases, the estimators of both estimands are asymptotically unbiased and normal at $\sqrt{n}$-rate, even when the parametric propensity score is estimated. In the rest of this section, we situate our contribution in the literature.

Matching has attracted much attention in the literature, with the idea of comparing similar units dating back to at least \cite{densen_studies_1952}; see \cite{cochran_matching_1953}. \cite{cochran_controlling_1973} review then-available matching methods applicable to observational studies. The reader is referred to  \cite{rubin_matched_2006} for a collection of historical results and to \cite{stuart_matching_2010} for a comprehensive survey. \cite{abadie_large_2006} present a key result closely related to ours. They study nearest neighbor matching, where the $M$ closest units in terms of covariates are matched to a given unit. They show that nearest neighbor matching on covariates is asymptotically normal, but unbiased only when we match on a scalar variable, such as the propensity score. Providing the identification results for unbiasedness, the foundations of propensity score matching is laid down by \cite{rosenbaum_central_1983}. \cite{abadie_matching_2016} derive some asymptotic properties of nearest neighbor matching on the estimated parametric propensity score. They discretise the maximum likelihood estimator of the propensity score parameter and show that the resulting matching estimator converges to a normal distribution as, first, the sample size increases and, \emph{then}, the discretisation gets finer. Since their approach changes the estimator, this asymptotic result is not equivalent to the asymptotic normality of nearest neighbor matching on the estimated parametric propensity score. In contrast, we do not change the estimator, nor do we appeal to discretisation arguments and double limits. Employing sample-splitting to estimate the propensity score, we establish the asymptotic normality of caliper matching on the estimated parametric propensity score as the sample size increases. Consequently, we are able to construct confidence intervals for ATE and ATT, centered at the caliper matching estimator based on the estimated propensity scores, which get more reliable as the sample size increases.

The first mention of caliper matching appears to be in \cite{cochran_controlling_1973}. Therein, it is analysed for a few specific models and is compared with other matching methods, such as nearest neighbor. The caliper is chosen based on the variances of the outcome in the treatment and control group. \cite{rosenbaum_constructing_1985} seem to be the first to consider caliper matching involving the propensity score as well as the covariates. They assume a logistic model for the propensity score, and match on the logit of the propensity score, that is, a linear function of the covariates. They choose the caliper based on the variances of the logit in the treatment and control group. The caliper choices of \cite{cochran_controlling_1973} and \cite{rosenbaum_constructing_1985} may lead to a large enough number of matches to reduce the variance of the caliper matching estimator. However, they do not make the bias of the caliper matching estimator converge to zero --- unless the caliper is used in combination with nearest neighbor matching; see next paragraph ---, for that the caliper needs to shrink with the sample size as we show in our present work.

Some authors, including \cite{rosenbaum_constructing_1985}, use the term caliper matching to refer to nearest neighbor matching with a caliper restriction: the $M$ nearest units are to be matched, but only if they are within the caliper. Others, for instance \cite{dehejia_propensity_1998}, use the term to mean that all units within the caliper are matched, even though they may be differently weighted.\footnote{The two interpretations coincide when $M$ is taken to be, for example, $n$ in the caliper restriction case. As $M$ is usually set to a constant independent of $n$, it is reasonable to distinguish the two interpretations.} We adopt the latter approach with uniform weights, sometimes also called radius matching \citep{huber_radius_2015}, because of its simplicity. Caliper matching can then be regarded as a kernel matching method with rectangular kernel and the bandwidth equal to the caliper. As such, the seminal work of \cite{heckman_matching_1998}, establishing the asymptotic normality of the kernel matching estimator of ATT even for nonparametrically estimated propensity score --- with bandwidth choice further investigated by \cite{frolich_matching_2005} ---, is closely related to our work. However, their results do not apply to caliper matching because they require the kernel to be Lipschitz continuous. The rectangular kernel fails to be so, prohibiting the asymptotic linear expansion of the kernel matching estimator, which is key to their argument. The work of \cite{lee_efficient_2018} is similar in spirit. It extends \cite{heckman_matching_1998} to a richer set of estimands beyond average effects using kernel matching methods, but also assuming a smooth kernel, excluding the rectangular one of caliper matching.

We overcome the nonsmoothness of the rectangular kernel by employing empirical process theory in \cite{alexander_rates_1987} and \Citet[]{van_der_vaart_weak_1996}. Writing the number of matches in terms of empirical measures enables us to characterise the asymptotic behaviour of caliper matching using ratio and tail bounds for empirical measures and processes. Furthermore, we can establish the efficiency properties of caliper matching. More efficient estimators have smaller variance and thus yield narrower confidence intervals. The efficiency of caliper matching depends on the estimand, the observed sample, the regression of the outcome on the covariates, and the knowledge of the propensity score.

First, we consider the case when the propensity score is known. We prove that if we only observe the propensity scores in our sample but not the covariates, or the regression of the outcome on the covariates only depends on the covariates through the propensity score, then the limiting variance of the caliper matching estimator of (i) ATE reaches the semiparametric lower bound; (ii) ATT reaches the semiparametric lower bound for \textit{unknown} propensity score \citep{hahn_role_1998}. The latter is not the best possible result as the lower bound for ATT, unlike ATE, is smaller when the propensity score is known \citep{hahn_role_1998}. Yet, we show that caliper matching is more efficient than nearest neighbor matching on the propensity scores studied by \cite{abadie_large_2006,abadie_matching_2016}, yielding narrower confidence intervals for ATE as well as ATT --- regardless of whether we observe the covariates in the sample or whether the outcome regression depends on the covariates or the propensity scores.

 Second, if the propensity score is unknown, but we assume and estimate a parametric specification such as the logit or probit model, then the limiting variance of the caliper matching estimator of both estimands is in general larger compared to when the propensity score is known. Consequently, it remains unclear whether the caliper or the nearest neighbor matching \citep{abadie_matching_2016} on the estimated  propensity scores  is more efficient. 
 
Our assumptions include the usual common support for the propensity score, and smoothness conditions for the conditional moments of the outcome and for (the density of) the propensity score. We verify our assumptions for a logit or probit model for the propensity score and for smooth, potentially nonlinear and heteroskedastic, regression of the outcome on the covariates with a well-behaved density on a compact support.

The rest of the paper is organised as follows. In \Cref{sec:preliminaries}, we introduce the conceptual framework and the caliper matching estimator. \Cref{sec:asymptotics} contains our contributions, the caliper choice and the asymptotic properties of the estimator. \Cref{sec:conclusion} concludes.



\section{Preliminaries}
\label{sec:preliminaries}

\subsection{Framework}
\label{sec:preliminaries:subsec:framework}

We adopt the potential outcome framework of \cite{neyman_jerzy_applications_1924} and \cite{rubin_estimating_1974} with no interference between the units (stable unit-treatment value assumption, \cite{rosenbaum_central_1983}). Let $D$ be the treatment indicator with value one corresponding to treatment and zero to control. The real-valued $Y^1,Y^0$ are the potential outcomes under treatment and control, respectively. We observe exactly one of $Y^1$ and $Y^0$, depending on $D$, so that the observed outcome is $Y=DY^1+(1-D)Y^0$. The estimands of interest, ATE and ATT, are defined respectively as
\begin{align*}
\tau \ceq \Eb{Y^1-Y^0}, \quad
\taut \ceq \Ebc{Y^1-Y^0}{D=1}.
\end{align*}
To identify ATE and ATT from observational data, we assume that the observed pretreatment covariates $X$, taking values in $\Xsupp\subset\real^K$, account for all the systematic differences between treated and control units. Formally, the potential outcomes are assumed to be independent of the treatment participation given the covariates, which is a standard assumption of causal inference \citep{rubin_estimating_1974}.

\begin{assumption}[Unconfoundedness]
\label{ass:ucf}
$Y^0\indep D\mid X$ and $Y^1\indep D\mid X$.
\end{assumption} 

Let $\pi(x)\ceq\probc{D=1}{X=x}$ be the propensity score with conditional distribution function $F_d(p)\ceq \probc{\pi(X)\leq p}{D=d}$. The $F_d$ are assumed to satisfy \Cref{ass:propscoredist}.

\begin{assumption}[Propensity Score Distribution]
\label{ass:propscoredist}
\begin{enumerate_noindent_nonewline}
\item $F_0,F_1$ admit densities $f_0, f_1$, respectively.
\item $f_0, f_1$ have the same compact support $[\underline p, \ba p]$, $0<\underline p<\ba p<1$. 
\item $f_0,f_1$ are strictly positive on their support.
\item $f_0,f_1$ are continuous on their support.
\end{enumerate_noindent_nonewline}
\end{assumption}

\noindent \Cref{ass:propscoredist} imposes the same requirements on the propensity score distribution as \cite{abadie_matching_2016}, except that it also requires the densities $f_0,f_1$ to be strictly positive. This requirement ensures that the quantile functions $F_d^{-1}$ have bounded derivatives, which we use for the caliper choice. It also plays a role in the proof of the asymptotic normality by ensuring that ratio bounds for empirical processes apply.\footnote{The strict positivity of $f_d$ implies that $\inf_{p\in[\underline p,\ba p]}\int_{p-\delta}^{p+\delta}f_d(\tld p)\deriv \tld p\gtrsim \delta>0$, so that the denominator in the ratios of empirical to true measures is bounded away from zero, keeping the ratios finite.}

\Cref{ass:propscoredist} implies that if there is a unit with propensity score in some region of $[0,1]$, then there is a positive probability of finding a unit from the opposite treatment group therein. This ensures that treated and control units can be compared in terms of their propensity scores. In combination with \cref{ass:ucf}, this yields the identification of the estimands from observed variables, by comparing treated and control units with the same propensity scores \citep{rosenbaum_central_1983}:
\begin{align}
\tau &=\Eb{\Eb{Y\mid D=1, \pi(X)}-\Eb{Y\mid D=0, \pi(X)}}, \label{eq:ident:ate} \\
\taut &=\Ebc{\Eb{Y\mid D=1, \pi(X)}-\Eb{Y\mid D=0, \pi(X)}}{D=1}. \label{eq:ident:att}
\end{align}

\subsection{Caliper Matching Estimator}
\label{sec:preliminaries:subsec:estimator}

We wish to construct estimators based on identification formulae \eqref{eq:ident:ate} and \eqref{eq:ident:att} from an independently and identically distributed (i.i.d.) sample from the distribution of $(Y,D,X)$, denoted by $((Y_i,D_i,X_i))_{i\in[n]}$, where $[n]\ceq\set{1,2,\ldots,n}$. This would necessitate finding sample units with the same value of the propensity score, which is infeasible for continuously distributed propensity scores. Rather, matching estimators look for units with \emph{similar} propensity scores. The caliper matching estimator explicitly controls the extent of similarity with the caliper $\delta$, whose choice is discussed later on in \Cref{sec:asymptotics}.

Suppose for now that the propensity score is known. Given $\delta>0$, the caliper matching estimator constructs the match set $\calJC(i)\ceq\{j\in[n]:D_j\neq D_i, |\pi(X_j)-\pi(X_i)|\leq\delta\}$ of unit $i\in[n]$. Next, it estimates the missing potential outcome of the unit with the mean outcome of units in the match set. Averaging out the difference between the (estimated) potential outcomes then gives the estimate of the causal effect. Let $M_i\ceq |\calJC(i)|$ be the number of matches of unit $i\in[n]$, and  write $N_0\ceq \sumn{i} (1-D_i)$, $N_1\ceq\sumn{i} D_i$ for the number of control and treated units, respectively. The estimators of ATE and ATT are defined respectively as
\begin{align*}
\thatpi &\ceq \nsumn{i}\left[D_i\left(Y_i-\fr{1}{M_i}\sum_{j\in\calJC(i)}Y_j\right)+(1-D_i)\left(\fr{1}{M_i}\sum_{j\in\calJC(i)}Y_j - Y_i \right) \right]\indic{M_i>0}, \\
\tthatpi &\ceq \fr{1}{N_1}\sum_{i\in[n]}D_i\left(Y_i-\fr{1}{M_i}\sum_{j\in\calJC(i)}Y_j\right)\indic{M_i>0}.
\end{align*}
The indicator $\indic{M_i>0}$, being one if unit $i$ has matches and zero if not, ensures that only units that have matches are included in the estimate. 

In practice, the propensity score is usually unknown. Often, it is assumed to follow a smooth parametric model, such as logit or probit. Following \cite{abadie_matching_2016}, we also make this assumption.

\begin{assumption}[Smooth Parametric Propensity Score]
\label{ass:parametric_propscore}
\begin{enumerate_noindent_nonewline}
\item \label{ass:parametric_propscore:model} The propensity score is $\probc{D=1}{X}=\pi(X,\theta_0)$ for a parametric model $\{\pi(\cdot,\theta):\theta\in\Theta\subset\real^K\}$ with $\theta_0$ in the interior of $\Theta$.
\item \label{ass:parametric_propscore:derivative}  $\theta\mapsto \pi(x,\theta)$ is differentiable in the neighbourhood of $\theta_0$ for all $x\in\Xsupp$.
\item \label{ass:parametric_propscore:bounded_derivative} The derivative in \Cref{ass:parametric_propscore}\ref{ass:parametric_propscore:derivative}  is bounded uniformly in $x\in\Xsupp$ in the neighbourhood of $\theta_0$.
\end{enumerate_noindent_nonewline}
\end{assumption}

\noindent The caliper matching estimator is then defined by a plug-in rule. Let $\calJC_{\theta}(i)\ceq\{j\in[n]:D_j\neq D_i, |\pi(X_j,\theta)-\pi(X_i,\theta)|\leq\delta\}$ be the match set and $M_i(\theta)\ceq|\calJC_{\theta}(i)|$ its cardinality for some $\theta\in\Theta$.  For an estimator $\hatth$ of $\theta_0$, the matching estimators of ATE and ATT are, respectively,
\begin{align*}
\thatpihat \ceq& \nsumn{i}\left[D_i\left(Y_i-\fr{1}{M_i(\hatth)}\sum_{j\in\calJC_{\hatth}(i)}Y_j\right) +(1-D_i)\left(\fr{1}{M_i(\hatth)}\sum_{j\in\calJC_{\hatth}(i)}Y_j - Y_i \right) \right]\indic{M_i(\hatth)>0}, \\
\tthatpihat \ceq & \fr{1}{N_1}\sum_{i\in[n]}D_i\left(Y_i-\fr{1}{M_i(\hatth)}\sum_{j\in\calJC_{\hatth}(i)}Y_j\right)\indic{M_i(\hatth)>0}.
\end{align*}



\section{Asymptotics}
\label{sec:asymptotics}

In this section, we state our main results: the caliper choice (\Cref{sec:asymptotics:subsec:caliper}), the asymptotic normality of the caliper matching estimators of ATE and ATT for known (\Cref{sec:asymptotics:subsec:pi}) and estimated (\Cref{sec:asymptotics:subsec:pi}) propensity scores, and the variance estimation (\Cref{sec:asymptotics:subsec:variance_estimation}).

\subsection{Caliper Choice}
\label{sec:asymptotics:subsec:caliper}

A smaller caliper means that the propensity scores of matched treated and control units are closer, so the match quality is better. At the same time, a smaller caliper leads to fewer matches. Hence, the caliper controls directly the quality and, indirectly, the number of matches, which, in turn, govern the properties of the matching estimator. The match quality determines the bias: comparing dissimilar units threatens the identification of estimands in \eqref{eq:ident:ate} and \eqref{eq:ident:att}. The number of matches determines the bias --- by excluding units with no matches --- as well as the variance of the estimator: since the estimator involves averages over the match set, a small match set gives large variance.

Thus, the right caliper choice must balance the quality and the number of matches. As the sample size increases, we expect that under \Cref{ass:propscoredist}, we can find both treated and control units in every region of $\Pisupp$ with increasing probability. It is then reasonable to aim for finding matches for each unit in the large sample limit. If we were to set the caliper to $\maxminDelta\ceq \max_{i\in[n]}\min_{j\in[n]:D_j\neq D_i}|\pi(X_i)-\pi(X_j)|$, the largest closest distance between treated and control units, we would have at least one match for each unit. The order of $\E\maxminDelta$ can be concisely described in terms of the sample size, relying on the results of \cite{shorack_empirical_2009} on spacings (all proofs are presented in \Cref{app:sec:proofs} and in the \hyperref[app:online]{Supplement}).

\begin{proposition}[Order of Expected Largest Closest Distance]
\label{prop:maxmindistorder}
Under \Cref{ass:propscoredist}, there exist constants $0<n_0,c<\infty$ such that $\E\maxminDelta\leq c\fr{\log n}{n}$ for all $n\geq n_0$.
\end{proposition}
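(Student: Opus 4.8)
The plan is to control $\E\maxminDelta$ by relating the largest closest cross-group distance to spacings of an appropriate empirical sample. First I would condition on the treatment labels, so that $N_0$ and $N_1$ are fixed with $N_0+N_1=n$; it suffices to show that the conditional expectation is $\bigO{(\log n)/n}$ uniformly over label configurations with, say, $n/3 \le N_d \le 2n/3$, and then handle the rare unbalanced configurations by a crude bound, since by Hoeffding $\bbP(\min(N_0,N_1) < n/3)$ is exponentially small and $\maxminDelta \le \ba p - \underline p$ always. Given the labels, the treated propensity scores $\pi(X_i)$ are i.i.d.\ with density $f_1$ on $[\underline p,\ba p]$ and independently the control ones are i.i.d.\ with density $f_0$; by the probability integral transform through $F_1$ (or $F_0$), and using that $f_0,f_1$ are bounded below by some $c_0>0$ on $[\underline p,\ba p]$ (\Cref{ass:propscoredist}), a gap of size $\delta$ on the propensity-score scale corresponds to a gap of size at most $\delta/c_0$ on the uniform scale. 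So it is enough to bound the analogous quantity for two independent uniform samples.

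Next I would bound $\min_{j:D_j\ne D_i}|\pi(X_i)-\pi(X_j)|$ for a fixed treated unit $i$ by the larger of the distances from $\pi(X_i)$ to its nearest control point \emph{on each side}; more simply, cover $[\underline p,\ba p]$ and argue that the largest closest distance is controlled by the maximal spacing within the combined sample of all $n$ propensity scores (the union of the two groups), because between any two consecutive points of the combined order statistics at most one group "owns" the interval, so a treated point in a long combined gap must still have a control neighbor within two combined spacings — more carefully, the largest closest cross-distance is at most twice the maximal spacing of the pooled sample plus a boundary correction. This reduces matters to the classical fact (e.g.\ \cite{shorack_empirical_2009}) that for $n$ i.i.d.\ draws from a density bounded below on a compact interval, the expected maximal spacing is $\bigO{(\log n)/n}$. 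Alternatively, and perhaps cleaner, I would directly use a union bound: $\bbP(\maxminDelta > t) \le \sum_{i} \bbP(\text{no opposite-group point within } t \text{ of } \pi(X_i))$, and for a fixed $i$ the inner probability is at most $(1 - c_0 t)^{N_{1-D_i}}$ by independence across groups and the density lower bound (for $t$ small enough that $[\pi(X_i)-t,\pi(X_i)+t]$ meets $[\underline p,\ba p]$ in a set of measure $\ge c_0 t$, using that $\underline p,\ba p$ are interior and $f_d$ is bounded below — near the endpoints one still gets measure $\ge c_0 t/2$). Hence $\bbP(\maxminDelta > t) \le n(1-c_0 t/2)^{\lfloor n/3\rfloor} \le n e^{-c_0 n t/6}$ on the balanced event, and integrating the tail, $\E[\maxminDelta \mid \text{labels}] \le t^\ast + \int_{t^\ast}^\infty n e^{-c_0 n t/6}\,\deriv t$, which for $t^\ast = C(\log n)/n$ with $C$ large gives $\bigO{(\log n)/n}$.

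Assembling the pieces: combine the balanced-event bound $\E[\maxminDelta \indic{\text{balanced}}] = \bigO{(\log n)/n}$ with the unbalanced contribution $(\ba p - \underline p)\,\bbP(\text{unbalanced}) = \bigO{e^{-cn}} = \smallO{(\log n)/n}$, and choose $n_0$ and $c$ so that the stated inequality holds for all $n \ge n_0$. The main obstacle — and it is a mild one — is the boundary behavior: a propensity score $\pi(X_i)$ landing within $t$ of $\underline p$ or $\ba p$ has opposite-group mass only on one side, so the measure of the matching window is roughly $c_0 t$ rather than $2c_0 t$; this only changes constants, and strict positivity of $f_d$ on the \emph{closed} support (\Cref{ass:propscoredist}) together with $0<\underline p<\ba p<1$ ensures the window always has measure $\gtrsim t$. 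A secondary bookkeeping point is justifying the interchange of conditioning on labels with the density-based tail bound, which is immediate since, conditionally on $(D_i)_{i\in[n]}$, the within-group propensity scores are i.i.d.\ with densities $f_1$, $f_0$ and the two groups are independent.
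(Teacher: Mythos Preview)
Your union-bound route is correct and is genuinely different from the paper's argument. The paper conditions on $\Dvector$, bounds $\max_{i:D_i=0}\minDelta_i$ by the maximal spacing of the \emph{treated} sample alone (every control score falls in some spacing interval of the treated order statistics), then pushes through the quantile transform $F_1^{-1}$ (Lipschitz since $\inf f_1>0$) to reduce to uniform spacings, and finally invokes the exact formula $\Ebc{\tld U_{(N_1+1)}}{\Dvector}=\fr{1}{N_1+1}\sum_{i\leq N_1+1}\fr{1}{N_1+2-i}$ from \cite{shorack_empirical_2009}, averaging over $N_1$ via \cite{cribari-neto_note_2000}. Your tail bound $\prob{\maxminDelta>t\mid\Dvector}\le n(1-c_0t/2)^{\min(N_0,N_1)}$ followed by integrating from $t^\ast\asymp(\log n)/n$ is more elementary---no spacing identities, no inverse-moment lemma---and yields the same order. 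The paper's approach has the advantage that the spacings machinery reappears later (e.g.\ in \Cref{lem:lindeberg_feller_bound}, where higher moments $\E\maxminDelta^r$ are needed), so it amortizes across the paper; your approach is self-contained for this proposition.

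Two corrections. First, your pooled-spacing alternative is wrong as stated: the largest closest cross-group distance is \emph{not} controlled by the maximal spacing of the combined sample. If many treated scores cluster together with no control among them, the pooled spacings inside the cluster can all be tiny while a treated unit in the middle is far from any control. What does work---and what the paper uses---is the maximal spacing of the \emph{opposite} group alone, since any score lands in some interval of that group's order statistics. Second, the ``balanced'' threshold $n/3\le N_d$ only gives Hoeffding decay when $p_1\in(1/3,2/3)$, which is not assumed; replace $n/3$ by, say, $p_1 n/2$ and $(1-p_1)n/2$ so the concentration goes through for any $p_1\in(0,1)$.
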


This suggests that the caliper choices, for $n\geq 2$,
\begin{align}
\delta\ceq\delta_n\ceq
\caliperlognn \quad\text{ or }\quad
\delta\ceq\delta_n\ceq\caliperDelta
\label{eq:caliper}
\end{align}
 for any fixed constant $s>0$ are asymptotically of the same order and large enough to guarantee matches for each unit, although the data-dependent choice $\delta_n=\caliperDelta$ can better accommodate smaller samples thus it is generally preferred. 
Indeed, \Cref{prop:number_of_matches} shows that, in fact, the implied number of matches is of the order $\log n$.

\begin{proposition}[Number of Matches]
\label{prop:number_of_matches}
Let the caliper satisfy \eqref{eq:caliper}. If \Cref{ass:propscoredist} holds, then there exist constants $0<c_l, c_u<\infty$ such that
$$c_l (1+\smallOP{1})\log n\leq \min_{i\in[n]}M_i \leq \max_{i\in[n]}M_i\leq c_u(1+\smallOP{1})\log n$$
as $n\to\infty$. Thus, $\prob{\min_{i\in[n]}M_i\geq 1}\to1$ as $n\to\infty$.
\end{proposition}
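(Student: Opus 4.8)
The plan is to express $M_i$ — the number of units in the opposite treatment group whose propensity score falls within $\delta_n$ of $\pi(X_i)$ — in terms of empirical measures, and then to sandwich it using a uniform ratio bound for empirical processes combined with the caliper order from Proposition 1. Concretely, for a treated unit $i$ (the control case is symmetric), $M_i = \sum_{j\in[n]} (1-D_j)\indic{|\pi(X_j)-\pi(X_i)|\le\delta_n} = N_0\,\mathbb{P}_{n,0}\big(\{p : |p-\pi(X_i)|\le \delta_n\}\big)$, where $\mathbb{P}_{n,0}$ is the empirical distribution of the control propensity scores. The population counterpart of the bracketed probability is $F_0(\pi(X_i)+\delta_n)-F_0(\pi(X_i)-\delta_n) = \int_{\pi(X_i)-\delta_n}^{\pi(X_i)+\delta_n} f_0(\tilde p)\,\deriv\tilde p$, which by \Cref{ass:propscoredist} (strict positivity and continuity of $f_0$ on the compact support $[\underline p,\ba p]$) is pinched between $2\underline f\,\delta_n(1+o(1))$ and $2\ba f\,\delta_n(1+o(1))$ uniformly in $\pi(X_i)\in[\underline p,\ba p]$, for $\underline f=\inf f_0>0$ and $\ba f=\sup f_0<\infty$ (with appropriate one-sided adjustment near the endpoints $\underline p,\ba p$, which only changes constants). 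Since $N_0/n\to\mathbb{P}(D=0)>0$ a.s. and $\delta_n\asymp \log n/n$, this gives $n\cdot\text{(population measure)}\asymp \log n$, hence the target order $M_i\asymp N_0\delta_n\asymp \log n$ — provided the empirical measure tracks the population measure uniformly over all the intervals $[\,p-\delta_n,p+\delta_n\,]$, $p\in[\underline p,\ba p]$.

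The crux is therefore a uniform relative (ratio-type) deviation bound: I need
$$
\sup_{p\in[\underline p,\ba p]} \left\lvert \frac{\mathbb{P}_{n,0}\big(|\cdot - p|\le\delta_n\big)}{F_0(p+\delta_n)-F_0(p-\delta_n)} - 1 \right\rvert \convprob 0.
$$
The relevant class is $\mathcal{C}_n=\{[p-\delta_n,p+\delta_n] : p\in[\underline p,\ba p]\}$, a subclass of intervals, which is a VC class of sets (VC index $\le 3$), so its covering numbers are polynomial uniformly in $n$. Because the envelope probabilities shrink at rate $\delta_n\asymp\log n/n$ and $n\delta_n\asymp\log n\to\infty$, the ratio-limit / relative Vapnik–Chervonenkis inequalities of \cite{alexander_rates_1987} (see also \Citet{van_der_vaart_weak_1996}, the inequalities for the modulus of continuity of empirical processes over VC classes with shrinking variance) apply: for sets $A$ with $P(A)\gtrsim \log n/n$, the empirical-to-true ratio concentrates around $1$ with the deviation being $o_P(1)$ — precisely the regime flagged in the paper's footnote after \Cref{ass:propscoredist}. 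I would cite the appropriate ratio bound, verify its hypotheses ($n\delta_n/\log n\to\infty$ is what makes the bound $o_P(1)$ rather than merely $O_P(1)$), and conclude the display above.

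Finally I assemble the pieces. On the event that the ratio bound holds and $N_0/n\in[\tfrac12\mathbb{P}(D=0),2\mathbb{P}(D=0)]$ (probability $\to1$), every treated $i$ satisfies
$$
c_l(1+o_P(1))\log n \;\le\; M_i \;=\; N_0\,\mathbb{P}_{n,0}\big(|\cdot-\pi(X_i)|\le\delta_n\big) \;\le\; c_u(1+o_P(1))\log n,
$$
with $c_l,c_u$ depending only on $\underline f,\ba f,\mathbb{P}(D=0)$ and the constant $s$ (or, for the data-dependent caliper, on the order of $\maxminDelta$ via \Cref{prop:maxmindistorder}); the same argument with $f_1$, $N_1$, $\mathbb{P}_{n,1}$ handles control units, and taking the worse of the two constant pairs gives uniformity over all $i\in[n]$, i.e. the stated bound on $\min_i M_i$ and $\max_i M_i$. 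Since the lower bound tends to infinity, $\prob{\min_{i\in[n]}M_i\ge 1}\to1$ is immediate. For the data-dependent caliper $\delta_n=\caliperDelta$ one also needs $\maxminDelta = O_P(\log n/n)$ from below bounded suitably; Markov's inequality applied to \Cref{prop:maxmindistorder} gives the upper side, while the lower order of $\maxminDelta$ — or more simply the fact that $\delta_n \ge \tfrac{\log N_0}{N_0+1}\asymp \log n/n$ by construction — supplies the matching lower bound. I expect the main obstacle to be stating the relative/ratio empirical-process inequality in exactly the form needed (correct VC-class bookkeeping and the $n\delta_n\to\infty$ normalisation) and handling the boundary of $[\underline p,\ba p]$ cleanly, both of which are technical rather than conceptual.
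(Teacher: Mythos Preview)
Your approach is the paper's: express $M_i = N_{1-D_i}\,\mathbbm{F}_{N_{1-D_i}}[\pi(X_i)\mypm\delta_n]$, factor this as $N_{1-D_i}\,F_{1-D_i}[\pi(X_i)\mypm\delta_n]\,(1+R_{1-D_i,i})$ with $R_{di}$ the empirical-to-true ratio minus one, bound the population measure via $\inf f_d>0$ and $\sup f_d<\infty$, and invoke Alexander's ratio bounds over the VC class of intervals to get $\max_i|R_{1-D_i,i}|=o_P(1)$ (the paper isolates this last step as \Cref{lem:convergence_of_ratios}).

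One slip to flag: the sufficient condition you state, ``$n\delta_n/\log n\to\infty$'', does \emph{not} hold here---for $\delta_n=s\log n/n$ this ratio is the constant $s$. The condition Alexander actually needs (his (2.2)--(2.3)) is only $N_d\gamma_n\to\infty$ together with $N_d^{-1}\log\log N_d = o(\gamma_n)$, both of which are satisfied when $\gamma_n\asymp\delta_n\asymp\log n/n$; the exponential tail bound (the paper's display \eqref{eq:wbound}) then gives $\prob{W_d>\zeta}\to 0$ for each fixed $\zeta>0$ precisely because $N_d\gamma_n\asymp\log n\to\infty$, with no extra $\log n$ to spare. So your argument goes through, but be careful to cite the correct hypothesis when you write it out, or you will think the bound fails at the borderline regime you are in.
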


\subsection{Known Propensity Score}
\label{sec:asymptotics:subsec:pi}

Assume for now that the propensity score $x\mapsto\pi(x)$ is known. We derive the asymptotic distribution of caliper matching in this setting, and show that the caliper choice \eqref{eq:caliper} not only leads to a number of matches increasing in the sample size, but also to the asymptotic unbiasedness of the matching estimator.

In the following, we make a series of assumptions amounting to the asymptotic normality of caliper matching, and we prove that, for instance, the models of \Cref{ex:admissible_models} satisfy these assumptions. Popular models, including the logit and probit for the propensity score and smooth heteroskedastic outcome regressions, are all covered by \Cref{ex:admissible_models} as long as the covariates admit a well-behaved density.\footnote{For simplicity of exposition, we assume throughout the paper that $X$ does not include an intercept. The intercept can be accommodated by redefining the distributional assumptions on $X$ to refer to the nonintercept coordinates of $X$.} The condition of having $K\geq2$ continuously distributed covariates with nonzero propensity score parameters is not restrictive; for if we had only one, then matching on the propensity score and matching on the covariate would be akin.\footnote{Replacing the propensity score with the scalar covariate in \Cref{ass:propscoredist,ass:variance,ass:lip_regression} would yield a version of \Cref{prop:maxmindistorder,prop:number_of_matches,thm:asymnorm_known_propscore_ate,thm:asymnorm_known_propscore_att} with the propensity score replaced with the covariate.} Regarding other conditions of \Cref{ex:admissible_models}, $\nu_d\indep D\mid X$ implies \Cref{ass:ucf}, while differentiability of $x\mapsto \Ebc{\nu_d^2}{X=x}$ allows for smooth heteroskedastic models. 

\begin{example}[Admissible Models]
\label{ex:admissible_models}
Let $g:\real\to[0,1]$ be a strictly increasing function that is twice continuously differentiable on $\real$, with first derivative $g'$ satisfying $\sup_{t\in\real}g'(t)<\infty$. The $(K\geq2)$-dimensional covariates have density $\Xdensity$, which is strictly positive on the compact support $\Xsupp$ and continuously differentiable. The propensity score and the potential outcomes satisfy
\begin{align*}
\pi(x) &= g(\theta_0^\intercal x) \\
Y^d & = m_d(X) + \nu_d,\quad\Ebc{\nu_d}{X}=0, \quad d\in\dummy,
\end{align*}
where $\theta_0$ is in the interior of $\Theta\subset\real^K$, and it has at least two nonzero coordinates, $\Theta$ is bounded, and the $m_d$ are continuously differentiable. For all  $d\in\dummy$, $\nu_d\indep D\mid X$, the $x\mapsto \Ebc{\nu_d^r}{X=x}$, $r\in\set{2,4}$, are continuously differentiable on $\Xsupp$, and $\inf_{x\in\Xsupp}\Ebc{\nu_d^2}{X=x}>0$.
\end{example}

We can rewrite $\thatpi,\tthatpi$ as weighted averages of the outcome variable $Y$ as follows:
\begin{align*}
\thatpi & =\nsumn{i}(2D_i-1)(\indic{M_i>0}+ w_i)Y_i,\quad \tthatpi = \fr{1}{N_1}\sum_{i\in[n]}(\indic{M_i>0}D_i-(1-D_i)w_i)Y_i, \\
w_i & \ceq \sum_{j\in\calJC(i)}\fr{1}{M_j},
\end{align*}
where $M_j=0$ only if $\calJC(i)$ is empty, in which case the sum in $w_i$ is taken to be zero.\footnote{This follows from the symmetry of caliper matching: $j\in\calJC(i)$ if and only if $i\in\calJC(j)$.}

Let $\mu^d(p)\ceq\Eb{Y\mid D=d, \pi(X)=p}$ be the regression function and $\varepsilon \ceq Y-\mu^{D}(\pi(X))$ be the corresponding disturbance term with conditional variance $$\sigma_d^2(p)\ceq \Vbc{\varepsilon}{D=d, \pi(X)=p}=\Vbc{Y}{D=d, \pi(X)=p}, \quad d\in\dummy.$$ 
When we apply caliper matching to imitate \eqref{eq:ident:ate} and \eqref{eq:ident:att}, we make two approximations. First, we compare  the outcome $Y$, rather than the regression $\mu^D(\pi(X))$, of the units. The error we make in doing so is $\varepsilon$. Second, we compare units with similar, rather than the same, propensity scores. Therefore, some assumptions must be imposed on the magnitude of $\varepsilon$ and the smoothness of $\mu^d$. The magnitude of $\varepsilon$ cannot be too large, but also, for convenience, not too small either to avoid degenerate limits. \Cref{ass:variance,ass:lip_regression} are the same as Assumption 4 in \cite{abadie_large_2006}, adapted to matching on the propensity score $\pi(X)$, rather than on the covariates $X$.

\begin{assumption}[Disturbance Term]
\label{ass:variance}
\begin{enumerate_noindent_nonewline}
\item \label{ass:variance:bound} The $\sigma_d^2$ satisfy
 $\inf_{d\in\dummy, p\in[\underline p,\ba p]}\sigma_d^2(p)>0$ and $\sup_{d\in\dummy, p\in[\underline p,\ba p]}\sigma_d^2(p)<\infty.$
\item \label{ass:variance:varepsilon} $\sup_{d\in\dummy, p\in[\underline p,\ba p]}\Eb{\varepsilon^4\mid D=d, \pi(X)=p}<\infty$.
\end{enumerate_noindent_nonewline}
\end{assumption}

\begin{assumption}[Lipschitz Regression Functions]
\label{ass:lip_regression}
The $\mu^d$ are Lipschitz continuous: there exists a constant $0< L_\mu<\infty$ such that $|\mu^d(p)-\mu^d(p')|\leq L_\mu|p-p'|$ for all $p,p'\in [\underline p,\ba p]$ for all $d\in\dummy$.
\end{assumption}
\noindent  Lipschitz continuity guarantees that when the propensity scores $\pi(X_i)$ and  $\pi(X_j)$ are close, which we control with $\delta_n$, then so are $\mu^d(\pi(X_i))$ and $\mu^d(\pi(X_j))$. This is in agreement with identification formulae \eqref{eq:ident:ate} and \eqref{eq:ident:att}, leading to asymptotic unbiasedness. Similarly to \cite{abadie_large_2006}, write the ATE estimator as
\begin{align}
\thatpi  = &\, \overline{\tau(\pi(X))} + E + B,  \label{eq:thatpi_decomp} \\
\overline{\tau(\pi(X))} \ceq &\, \nsumn{i} \tau(\pi(X_i)),\quad \tau(\pi(X_i))\ceq \mu^1(\pi(X_i))-\mu^0(\pi(X_i)), \label{eq:ydecomptau} \\
E \ceq &\,\nsumn{i}E_i,\quad E_i\ceq (2D_i-1)(\indic{M_i>0}+ w_i)\varepsilon_i,\label{eq:ydecompecdo} \\
B \ceq &\,\nsumn{i}B_i, \label{eq:ydecompbcdo}  \\
 B_i\ceq&\, (2D_i-1)\fr{\indic{M_i>0}}{M_i}\sum_{j\in\calJC(i)}(\mu^{1-D_i}(\pi(X_i))-\mu^{1-D_i}(\pi(X_j))) \nonumber \\
 &\,+(2D_i-1)(\indic{M_i>0}-1)(\mu^{1-D_i}(\pi(X_i))-\mu^{D_i}(\pi(X_i))). \label{eq:ydecompbicdo}
\end{align}
The first term $\overline{\tau(\pi(X))}$ has mean $\tau$ and the second term $E$ has mean zero. After centering at $\tau$, the first two terms shall be shown to be asymptotically jointly normal and independent at $\sqrt{n}$-rate. The third term $B$ has two sources of bias. The first term in \eqref{eq:ydecompbicdo} is the bias stemming from imperfect matches. If matches were exact, this term would be zero. By \Cref{ass:lip_regression}, the magnitude of this term is $\delta_n$, hence it tends to zero even when multiplied with $\sqrt{n}$. The second term in \eqref{eq:ydecompbicdo} is due to discarding unmatched units, which may happen for the caliper choice $\delta_n=\caliperlognn$, unlike for the data-dependent choice $\delta_n=\caliperDelta$. This leads to a bias because we introduce an artificial sample selection based on $\delta_n$. If every unit had at least one match, as is the case for the data-dependent caliper choice, this term would be zero. But, as shown in \cref{prop:number_of_matches}, this happens in the large sample limit, giving the asymptotic normality of the ATE estimator $\thatpi$.

\begin{theorem}[Asymptotic Normality for Known Propensity Score (ATE)]
\label{thm:asymnorm_known_propscore_ate}
Suppose that $x\mapsto\pi(x)$ is known and the caliper $\delta_n$ satisfies \eqref{eq:caliper}. If \Cref{ass:ucf,ass:propscoredist,ass:variance,ass:lip_regression} all hold, then 
$$\sqrt{n}(\thatpi-\tau)\rsquig\mathcal{N}(0,V)\quad\text{ as $n\to\infty$},$$ where $V\ceq V_{\tau}+V_{\sigma,\pi}$ with $V_\tau\ceq \Eb{(\tau(\pi(X))-\tau)^2}$ and $V_{\sigma,\pi}\ceq\Eb{\fr{\sigma_0^2(\pi(X))}{1-\pi(X)}+\fr{\sigma_1^2(\pi(X))}{\pi(X)}}$.
\end{theorem}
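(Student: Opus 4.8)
The plan is to work from the decomposition \eqref{eq:thatpi_decomp}, $\thatpi-\tau=(\overline{\tau(\pi(X))}-\tau)+E+B$, treating the three summands separately. For the bias $B$ I would bound the two pieces of $B_i$ in \eqref{eq:ydecompbicdo}. The imperfect-match piece has modulus at most $L_\mu\delta_n$ by \Cref{ass:lip_regression} and the caliper constraint $|\pi(X_i)-\pi(X_j)|\le\delta_n$, so $\sqrt n$ times the corresponding part of $B$ is at most $L_\mu\sqrt n\,\delta_n=\bigOP{\log n/\sqrt n}=\smallOP{1}$ for either caliper in \eqref{eq:caliper} (for the data-dependent one using \Cref{prop:maxmindistorder} and $N_d/n\convprob\prob{D=d}\in(0,1)$). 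The discard piece is nonzero only for units with $M_i=0$, hence vanishes identically on $\{\min_{i\in[n]}M_i\ge1\}$, an event of probability tending to one by \Cref{prop:number_of_matches}; so $\sqrt n B=\smallOP{1}$.

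The term $\sqrt n(\overline{\tau(\pi(X))}-\tau)$ is an average of i.i.d., mean-zero (by \eqref{eq:ident:ate}), bounded variables ($\mu^0,\mu^1$ are bounded, being Lipschitz on the compact $[\underline p,\ba p]$), so the classical central limit theorem gives $\sqrt n(\overline{\tau(\pi(X))}-\tau)\rsquig\mathcal N(0,V_\tau)$. The core is $\sqrt n E$, which I would analyse conditionally on $\mathcal{G}_n\ceq\sigma\big((D_i,\pi(X_i)):i\in[n]\big)$. The weights $\indic{M_i>0}$ and $w_i$ and the caliper $\delta_n$ are $\mathcal{G}_n$-measurable, while --- because the $(Y_i,D_i,X_i)$ are i.i.d.\ and $\mu^d(p)=\Eb{Y\mid D=d,\pi(X)=p}$ --- the disturbances $\varepsilon_i$ are conditionally independent given $\mathcal{G}_n$ with $\Ebc{\varepsilon_i}{\mathcal{G}_n}=0$ and $\Ebc{\varepsilon_i^2}{\mathcal{G}_n}=\sigma_{D_i}^2(\pi(X_i))$. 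Hence $\sqrt n E=n^{-1/2}\sumn i(2D_i-1)(\indic{M_i>0}+w_i)\varepsilon_i$ is a conditionally independent, mean-zero triangular array, to which I would apply a conditional Lindeberg--Feller theorem: the conditional Lindeberg condition follows from $\sup_{d,p}\Ebc{\varepsilon^4}{D=d,\pi(X)=p}<\infty$ (\Cref{ass:variance}) together with $\max_{i\in[n]}w_i=\bigOP{1}$ (by \Cref{prop:number_of_matches}, since $w_i\le\max_{i\in[n]}M_i/\min_{i\in[n]}M_i$ on $\{\min_{i\in[n]}M_i\ge1\}$), and the conditional variance is $s_n^2\ceq n^{-1}\sumn i(\indic{M_i>0}+w_i)^2\sigma_{D_i}^2(\pi(X_i))$. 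Granting $s_n^2\convprob V_{\sigma,\pi}$ (the key step; see below), this gives $\Ebc{e^{\mathrm{i}t\sqrt n E}}{\mathcal{G}_n}\convprob e^{-t^2V_{\sigma,\pi}/2}$ for every fixed $t$. Since $\sqrt n(\overline{\tau(\pi(X))}-\tau)$ is $\mathcal{G}_n$-measurable and bounded in modulus, the tower property yields
\[\Eb{e^{\mathrm{i}s\sqrt n(\overline{\tau(\pi(X))}-\tau)+\mathrm{i}t\sqrt n E}}=\Eb{e^{\mathrm{i}s\sqrt n(\overline{\tau(\pi(X))}-\tau)}\,\Ebc{e^{\mathrm{i}t\sqrt n E}}{\mathcal{G}_n}}\longrightarrow e^{-s^2V_\tau/2}\,e^{-t^2V_{\sigma,\pi}/2}\]
by bounded convergence and the central limit theorem for $\sqrt n(\overline{\tau(\pi(X))}-\tau)$, so $\big(\sqrt n(\overline{\tau(\pi(X))}-\tau),\sqrt n E\big)$ is asymptotically bivariate normal with independent components of variances $V_\tau,V_{\sigma,\pi}$; adding $\sqrt n B=\smallOP{1}$ and applying Slutsky's lemma gives $\sqrt n(\thatpi-\tau)\rsquig\mathcal N(0,V_\tau+V_{\sigma,\pi})$.

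What remains, and the main obstacle, is $s_n^2\convprob V_{\sigma,\pi}$. On $\{\min_{i\in[n]}M_i\ge1\}$ the indicators are one, so it is enough to show $n^{-1}\sumn i(1+w_i)^2\sigma_{D_i}^2(\pi(X_i))\convprob V_{\sigma,\pi}$. Expressing $M_i$ and $w_i=\sum_{j\in\calJC(i)}1/M_j$ through the empirical sub-measures $p\mapsto n^{-1}\sumn j\indic{D_j=d}\indic{|\pi(X_j)-p|\le\delta_n}$, $d\in\dummy$, and using Bayes' rule (which ties $f_0,f_1$ to the marginal density of $\pi(X)$), one identifies the pointwise targets $w_i\approx\pi(X_i)/(1-\pi(X_i))$ if $D_i=0$ and $w_i\approx(1-\pi(X_i))/\pi(X_i)$ if $D_i=1$, for which $(1+w_i)^2$ equals $(1-\pi(X_i))^{-2}$, respectively $\pi(X_i)^{-2}$; replacing $w_i$ by these targets turns the sum into an i.i.d.\ average of bounded variables whose limit, after a change of variables back to the law of $\pi(X)$, is exactly $V_{\sigma,\pi}$. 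Making the approximation of $w_i$ rigorous --- uniformly enough that $n^{-1}\sumn i(1+w_i)^2\sigma_{D_i}^2(\pi(X_i))$ concentrates despite the sample-wide dependence among the $w_i$ --- is precisely where the ratio and tail bounds for empirical measures and processes of \citet{alexander_rates_1987} and \citet{van_der_vaart_weak_1996} enter, leaning on the strict positivity and continuity of $f_0,f_1$ on the compact $[\underline p,\ba p]$ in \Cref{ass:propscoredist} (which keep the empirical-to-true measure ratios under control); I expect this to be carried out in one or more auxiliary lemmas and to be the longest part of the proof. A minor further point is that the data-dependent caliper makes $\delta_n$ random, so these bounds must hold uniformly over calipers in a shrinking band around $s\log n/n$.
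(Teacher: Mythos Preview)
Your proposal is correct and follows the paper's architecture: the decomposition \eqref{eq:thatpi_decomp}, the classical CLT for $\overline{\tau(\pi(X))}$, a conditional Lindeberg--Feller argument for $E$ given $\sigma((D_i,\pi(X_i)):i\in[n])$, the combination of the two via conditional characteristic functions and bounded convergence, and the identification of $s_n^2\convprob V_{\sigma,\pi}$ as the main technical lemma (this is the paper's \Cref{lem:efficiency}, proved exactly as you anticipate via the ratio bounds of \Cref{lem:convergence_of_ratios}).

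One difference worth noting: your verification of the conditional Lindeberg condition is more elementary than the paper's. You use $\max_{i\in[n]}w_i\le \max_{i\in[n]}M_i/\min_{i\in[n]}M_i=\bigOP{1}$ directly from \Cref{prop:number_of_matches}, so $n^{-2}\sumn{i}(1+w_i)^4\sigma_{D_i}^4(\pi(X_i))=\bigOP{n^{-1}}$, and the Lindeberg sum vanishes in probability. The paper instead routes through an unconditional moment bound, \Cref{lem:lindeberg_feller_bound}, which shows $\max_{i\in[n]}\E(\indic{M_i>0}+w_i)^4=\smallO{n^\varrho}$ for every $\varrho>0$ using the same Alexander/van der Vaart--Wellner ratio and tail bounds a second time; this is longer but yields a moment (rather than in-probability) estimate. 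Your shortcut is valid precisely because the Lindeberg condition only needs to hold in probability, and \Cref{prop:number_of_matches} already delivers the two-sided $\log n$ control on all $M_i$ simultaneously.
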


\cite{abadie_large_2006} prove that nearest neighbor matching is asymptotically unbiased only when we match on a scalar covariate. Caliper matching is very much alike. If we were to match on the $K$-dimensional covariates, similar arguments show that, under regularity conditions, the bias of $\sqrt{n}(\thatpi-\tau)$ would be of the order $\sqrt{n}(\delta_n+\indic{\set{\exists i\in[n]:M_i=0}})$ and the number of matches would be of the order $n\delta_n^K$. It would then be impossible to have sufficiently good match quality and enough matches at the same time for $K\geq2$, so the bias $B$ would not vanish. Therefore, it is crucial that we match on the scalar propensity score. When we do so, the ATT estimator $\tthatpi$ is also asymptotically normal.

\begin{theorem}[Asymptotic Normality for Known Propensity Score (ATT)]
\label{thm:asymnorm_known_propscore_att}
Suppose that $x\mapsto\pi(x)$ is known and the caliper $\delta_n$ satisfies \eqref{eq:caliper}. Let $p_1\ceq\E\pi(X)$. If \Cref{ass:ucf,ass:propscoredist,ass:variance,ass:lip_regression} all hold, then 
$$\sqrt{n}(\tthatpi-\taut)\rsquig\mathcal{N}(0,V_{\mathrm{t}})\quad\text{ as $n\to\infty$},$$ where $V_{\mathrm{t}}\ceq V_{\taut}+V_{\mathrm{t},\sigma,\pi}$ with $V_{\taut}\ceq \fr{1}{p_1^2}\Eb{\pi(X)(\tau(\pi(X))-\taut)^2}$ and 
$$V_{\mathrm{t},\sigma,\pi}\ceq \fr{1}{p_1^2}\Eb{\fr{\pi(X)^2\sigma_0^2(\pi(X))}{1-\pi(X)}+\pi(X)\sigma_1^2(\pi(X))}.$$
\end{theorem}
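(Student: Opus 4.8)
The plan is to mirror the decomposition used for the ATE estimator in \eqref{eq:thatpi_decomp}, adapted to the ATT weighting $\frac1{N_1}\sum_i(\indic{M_i>0}D_i-(1-D_i)w_i)Y_i$. Writing $Y_i=\mu^{D_i}(\pi(X_i))+\varepsilon_i$, I would split $\tthatpi-\taut$ into three pieces: a ``oracle'' average $\overline{\taut(\pi(X))}\ceq\frac1{N_1}\sum_{i\in[n]}D_i\bigl(\mu^1(\pi(X_i))-\mu^0(\pi(X_i))\bigr)$ recentered at $\taut$, a disturbance term $E_{\mathrm t}\ceq\frac1{N_1}\sum_{i\in[n]}(\indic{M_i>0}D_i-(1-D_i)w_i)\varepsilon_i$, and a bias term $B_{\mathrm t}$ built exactly as in \eqref{eq:ydecompbicdo} but with the ATT weights. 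Because $N_1/n\convprob p_1$ by the law of large numbers and $p_1=\E\pi(X)>0$ under \Cref{ass:propscoredist}, I can replace $1/N_1$ by $1/(np_1)$ up to $o_P(n^{-1/2})$ throughout, reducing everything to $\frac1n$-normalized sums.

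The first step is the oracle term. Here $\frac1n\sum_{i\in[n]}D_i\bigl(\mu^1(\pi(X_i))-\mu^0(\pi(X_i))-\taut\bigr)$ is an i.i.d.\ average of mean-zero terms (using $\taut=\E[\tau(\pi(X))\mid D=1]$ together with $\Eb{D_i(\tau(\pi(X_i))-\taut)}=0$), so a CLT gives asymptotic normality with variance $\Eb{\pi(X)(\tau(\pi(X))-\taut)^2}$ after dividing by $p_1$, explaining the $V_{\taut}/p_1^2$ scaling; I would carry the joint limit with the disturbance term through a Cramér--Wold / Lindeberg argument. The second step is the disturbance term $E_{\mathrm t}$. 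Conditionally on $(D_i,X_i)_{i\in[n]}$ the $\varepsilon_i$ are independent mean zero with variance $\sigma_{D_i}^2(\pi(X_i))$ bounded away from $0$ and $\infty$ by \Cref{ass:variance}, so I would compute its conditional variance: the treated units contribute $\frac1{n^2}\sum_i D_i\sigma_1^2(\pi(X_i))$, and a control unit $j$ contributes $\frac1{n^2}w_j^2\sigma_0^2(\pi(X_j))$. The key computation is to show $\frac1n\sum_{j}(1-D_j)w_j^2\sigma_0^2(\pi(X_j))\convprob\Eb{\frac{\pi(X)^2}{1-\pi(X)}\sigma_0^2(\pi(X))}$; this is where \Cref{prop:number_of_matches} and the empirical-process ratio bounds enter, since $w_j=\sum_{i\in\calJC(j)}1/M_i$ is, up to vanishing error, the number of treated units within $\delta_n$ of $\pi(X_j)$ divided by the (roughly constant across the window) number of control matches, which concentrates around $\frac{f_1(p)}{f_0(p)}\cdot\frac{\pi(p)}{\,}$-type ratios that integrate to $\pi(X)/(1-\pi(X))$ against the treated density. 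Lindeberg's condition for the conditional CLT follows from the fourth-moment bound in \Cref{ass:variance} plus $M_i\gtrsim\log n\to\infty$.

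The third step is to show the bias $B_{\mathrm t}=o_P(n^{-1/2})$. The imperfect-match part is bounded by $L_\mu\delta_n=O(\log n/n)$ uniformly by \Cref{ass:lip_regression}, so $\sqrt n$ times it vanishes; the unmatched-unit part is supported on the event $\{\exists i:M_i=0\}$, which has probability $o(1)$ by \Cref{prop:number_of_matches}, and on that event the summand is bounded, so it is $o_P(n^{-1/2})$ as well — this is verbatim the argument sketched after \Cref{thm:asymnorm_known_propscore_ate}. Combining the three steps via Slutsky and the joint CLT of steps 1--2 yields $\sqrt n(\tthatpi-\taut)\rsquig\mathcal N(0,V_{\mathrm t})$ with $V_{\mathrm t}=V_{\taut}+V_{\mathrm t,\sigma,\pi}$; I also need $\mathrm{Cov}$ between the oracle and disturbance terms to vanish, which follows since $\E[\varepsilon_i\mid D,X]=0$ makes the cross term a conditional-mean-zero average.

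The main obstacle is step 2: controlling $w_j$ and showing its (weighted) second moment converges to the stated integral. Unlike the ATE case where the analogous quantity appears symmetrically, here the asymmetric ATT weighting means I must handle the control units' weights $w_j$ precisely, and $w_j$ is a genuinely complicated object — a sum of reciprocals of random match counts of other units. The route is to write $M_i$ and $w_j$ in terms of empirical measures $\mathbb P_n$ of the propensity scores within calipers, invoke the ratio limit theorems of \cite{alexander_rates_1987} and \Citet{van_der_vaart_weak_1996} to replace these empirical counts by their population analogues $N_d F_d$-type expressions uniformly over the support (this is exactly where strict positivity of $f_0,f_1$ from \Cref{ass:propscoredist} is used, as noted in the paper's footnote), and then pass to the limit using continuity of $f_0,f_1$ and dominated convergence. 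Once $w_j$ is pinned down as $\approx \pi(\pi(X_j))/(1-\pi(\pi(X_j)))$ in the relevant averaged sense, the variance formula falls out; everything else is routine CLT bookkeeping.
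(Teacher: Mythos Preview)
Your proposal is correct and follows essentially the same route as the paper: the same three-term decomposition $\overline{\taut(\pi(X))}+E_{\mathrm t}+B_{\mathrm t}$, a standard CLT with Slutsky for the oracle piece, a conditional Lindeberg--Feller argument for $E_{\mathrm t}$ together with the ratio/empirical-process bounds of \Cref{lem:convergence_of_ratios} to identify the limiting variance (the ATT analogue of \Cref{lem:efficiency}), and \Cref{prop:number_of_matches} plus \Cref{ass:lip_regression} to kill $B_{\mathrm t}$. One small tightening: your blanket claim that $1/N_1$ may be replaced by $1/(np_1)$ ``up to $o_P(n^{-1/2})$'' is only valid for sums that are already centered; for the oracle piece, write it as $(N_1/n)^{-1}\cdot n^{-1/2}\sum_i D_i(\tau(\pi(X_i))-\taut)$ and apply Slutsky to the prefactor, exactly as the paper does, rather than substituting inside an uncentered sum.
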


To examine the efficiency of $\thatpi$ and $\tthatpi$, let 
\begin{align*}
\mu_\Xsupp^d(x)\ceq\Ebc{Y}{D=d,X=x} \text{ and } \sigma_{\Xsupp,d}^2(x) \ceq \Vbc{Y}{D=d,X=x}
\end{align*}
for $d\in\dummy$. The semiparametric efficiency bound of ATE is
\begin{align}
V_\mathrm{eff}\ceq \Eb{(\mu_\Xsupp^1(X)-\mu_\Xsupp^0(X)-\tau)^2+\fr{\sigma_{\Xsupp,0}^2(X)}{1-\pi(X)}+\fr{\sigma_{\Xsupp,1}^2(X)}{\pi(X)}}, \label{eq:semipara_eff_bound_ate}
\end{align}
irrespective of whether or not the propensity scores are known (\citet[Theorems 1 and 2]{hahn_role_1998}). The semiparametric efficiency bound of ATT is
\begin{align*}
V_{\mathrm{t},\mathrm{eff},\pi}\ceq \fr{1}{p_1^2}\Eb{(\mu_\Xsupp^1(X)-\mu_\Xsupp^0(X)-\taut)^2\pi(X)^2+\fr{\pi(X)^2\sigma_{\Xsupp,0}^2(X)}{1-\pi(X)}+\pi(X)\sigma_{\Xsupp,1}^2(X)} 
\end{align*}
if the propensity scores are known, and 
\begin{align*}
V_{\mathrm{t},\mathrm{eff}}\ceq \fr{1}{p_1^2}\Eb{(\mu_\Xsupp^1(X)-\mu_\Xsupp^0(X)-\taut)^2\pi(X)+\fr{\pi(X)^2\sigma_{\Xsupp,0}^2(X)}{1-\pi(X)}+\pi(X)\sigma_{\Xsupp,1}^2(X)}  
\end{align*}
if the propensity scores are unknown (\citet[Theorems 1 and 2]{hahn_role_1998}). The limiting variance $V$  of $\thatpi$ resembles the efficiency bound $V_\mathrm{eff}$, except that $V$ involves moments of the outcome conditional on the propensity score $\pi(X)$, rather than on the covariates $X$ as in $V_\mathrm{eff}$. Hence, if we were to observe only $\pi(X)$ in our sample, instead of $X$, $\thatpi$ would be semiparametrically efficient, reaching $V_\mathrm{eff}$. It is also immediate from \Cref{thm:asymnorm_known_propscore_ate} and \eqref{eq:semipara_eff_bound_ate}, that if we had $\mu_\Xsupp^d(X)=\mu^d(\pi(X))$ and $\sigma_{\Xsupp,d}^2(X)=\sigma_d^2(\pi(X))$ for all $d\in\dummy$ --- so that the conditional moments of the outcome given the covariates only depended on the propensity score ---, then too, the ATE estimator $\thatpi$ would be semiparametrically efficient. In truth, a more precise result in \Cref{prop:semipara_eff} holds.

\begin{proposition}[Semiparametric Efficiency]
\label{prop:semipara_eff}
Suppose that \Cref{ass:ucf} holds. Then $V_\mathrm{eff}\leq V$ and $V_{\mathrm{t},\mathrm{eff}}\leq V_{\mathrm{t}}$ with equality in both cases if and only if
\begin{align}
\mu_\Xsupp^D(X)=\mu^D(\pi(X))\quad \text{almost surely}. \label{cond:semipara_eff:c2}
\end{align}
\end{proposition}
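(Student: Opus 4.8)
The plan is to compare $V$ with $V_{\mathrm{eff}}$ term by term, and likewise $V_{\mathrm{t}}$ with $V_{\mathrm{t},\mathrm{eff}}$, reducing everything to conditional Jensen-type inequalities for the conditional expectation operator $X\mapsto\pi(X)$. The key observation is that, by \Cref{ass:ucf} and the tower property, $\mu^d(\pi(X))=\Ebc{\mu_\Xsupp^d(X)}{\pi(X)}$ and, more importantly, the law of total variance gives $\sigma_d^2(\pi(X))=\Ebc{\sigma_{\Xsupp,d}^2(X)}{D=d,\pi(X)}+\Vbc{\mu_\Xsupp^d(X)}{D=d,\pi(X)}$; under unconfoundedness the conditioning on $D=d$ may be dropped inside, so $\sigma_d^2(\pi(X))=\Ebc{\sigma_{\Xsupp,d}^2(X)}{\pi(X)}+\Vbc{\mu_\Xsupp^d(X)}{\pi(X)}$.

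First I would treat the ATE case. Write $V-V_{\mathrm{eff}}$ as the sum of three pieces. For the noise terms, substituting the total-variance identity into $V_{\sigma,\pi}$ shows that $\Eb{\sigma_d^2(\pi(X))/w_d(\pi(X))}-\Eb{\sigma_{\Xsupp,d}^2(X)/w_d(\pi(X))}=\Eb{\Vbc{\mu_\Xsupp^d(X)}{\pi(X)}/w_d(\pi(X))}\geq 0$, where $w_0(p)=1-p$, $w_1(p)=p$ (here I use that $1/w_d(\pi(X))$ is $\sigma(\pi(X))$-measurable, so it pulls out of the inner conditional expectation). For the regression term, $V_\tau-\Eb{(\mu_\Xsupp^1(X)-\mu_\Xsupp^0(X)-\tau)^2}=\Eb{(\tau(\pi(X))-\tau)^2}-\Eb{(\mu_\Xsupp^1(X)-\mu_\Xsupp^0(X)-\tau)^2}=-\Eb{\Vbc{\mu_\Xsupp^1(X)-\mu_\Xsupp^0(X)}{\pi(X)}}\leq 0$, since $\tau(\pi(X))=\Ebc{\mu_\Xsupp^1(X)-\mu_\Xsupp^0(X)}{\pi(X)}$ and $\E\tau(\pi(X))=\tau$. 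Adding up, and using $\Vbc{\mu_\Xsupp^1(X)-\mu_\Xsupp^0(X)}{\pi(X)}=\Vbc{\mu_\Xsupp^1(X)}{\pi(X)}+\Vbc{\mu_\Xsupp^0(X)}{\pi(X)}-2\,\mathrm{Cov}(\mu_\Xsupp^1(X),\mu_\Xsupp^0(X)\mid\pi(X))$, I get
\begin{align*}
V-V_{\mathrm{eff}}=\Eb{\fr{\Vbc{\mu_\Xsupp^0(X)}{\pi(X)}}{1-\pi(X)}+\fr{\Vbc{\mu_\Xsupp^1(X)}{\pi(X)}}{\pi(X)}-\Vbc{\mu_\Xsupp^1(X)-\mu_\Xsupp^0(X)}{\pi(X)}}.
\end{align*}
The integrand is a quadratic form in the conditional variances/covariance of $(\mu_\Xsupp^0(X),\mu_\Xsupp^1(X))$ given $\pi(X)$; writing $a=\Vbc{\mu_\Xsupp^0(X)}{\pi(X)}$, $b=\Vbc{\mu_\Xsupp^1(X)}{\pi(X)}$, $c$ the conditional covariance, and $p=\pi(X)$, it equals $a/(1-p)+b/p-(a+b-2c)=a p/(1-p)+b(1-p)/p+2c$. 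Since $|c|\leq\sqrt{ab}$ and $p\in(0,1)$, AM–GM gives $a p/(1-p)+b(1-p)/p\geq 2\sqrt{ab}\geq -2c$, so the integrand is $\geq 0$, proving $V\geq V_{\mathrm{eff}}$, with equality iff the integrand vanishes a.s., i.e. iff $a=b=c=0$ a.s.\footnote{If $ap/(1-p)+b(1-p)/p+2c=0$ then $2\sqrt{ab}\le ap/(1-p)+b(1-p)/p=-2c\le 2\sqrt{ab}$, forcing $ap/(1-p)=b(1-p)/p$ and $|c|=\sqrt{ab}$; the first forces $a(1-p)^{-2}=b p^{-2}\cdot$(const) only when $a=b=0$ unless $p$ is degenerate, and then $c=0$ too.} The condition $a=b=0$ a.s. means $\mu_\Xsupp^d(X)$ is $\sigma(\pi(X))$-measurable for both $d$, which is exactly \eqref{cond:semipara_eff:c2}.

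Next I would repeat this for ATT. The structure is identical, with the extra weight $\pi(X)$ (respectively $\pi(X)^2$) inside the expectations and the $1/p_1^2$ prefactor, which are all $\sigma(\pi(X))$-measurable and hence commute with the inner conditioning. The noise-term difference is $\fr{1}{p_1^2}\Eb{\pi(X)^2\Vbc{\mu_\Xsupp^0(X)}{\pi(X)}/(1-\pi(X))+\pi(X)\Vbc{\mu_\Xsupp^1(X)}{\pi(X)}}\geq0$ and the regression-term difference is $-\fr{1}{p_1^2}\Eb{\pi(X)\Vbc{\mu_\Xsupp^1(X)-\mu_\Xsupp^0(X)}{\pi(X)}}$; combining, $V_{\mathrm{t}}-V_{\mathrm{t},\mathrm{eff}}=\fr{1}{p_1^2}\Eb{\pi(X)\big(a p/(1-p)+b(1-p)/p+2c\big)}\geq0$ by the same pointwise bound (the factor $\pi(X)>0$ does not affect the sign), with equality again iff $a=b=c=0$ a.s., i.e. iff \eqref{cond:semipara_eff:c2} holds. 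Note that here I deliberately compare against $V_{\mathrm{t},\mathrm{eff}}$ (the unknown–propensity-score bound), consistent with the statement.

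The main obstacle is bookkeeping the conditional second moments correctly: one must be careful that the law of total variance is applied conditionally on $\pi(X)$ (not unconditionally), that unconfoundedness is invoked to strip the $D=d$ conditioning from the inner moments $\mu_\Xsupp^d$, $\sigma_{\Xsupp,d}^2$, and that all the propensity-score-measurable weights ($1/(1-\pi(X))$, $1/\pi(X)$, $\pi(X)$, $\pi(X)^2$) are pulled outside the inner conditional expectations before the variance decomposition is applied. Once the algebra is organized into the single integrand $a p/(1-p)+b(1-p)/p+2c$ with $|c|\leq\sqrt{ab}$, the inequality and the equality characterization are a one-line AM–GM argument, and the equality condition $a=b=0$ a.s. is precisely measurability of $\mu_\Xsupp^D(X)$ with respect to $\sigma(\pi(X))$, i.e. \eqref{cond:semipara_eff:c2}.
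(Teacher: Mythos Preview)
Your approach---direct term-by-term comparison via the law of total variance---is genuinely different from the paper's. The paper instead identifies $V=\Vb{\chi}$ and $V_{\mathrm{eff}}=\Vb{\chi_\Xsupp}$ for explicit influence-type functions $\chi,\chi_\Xsupp$ (and analogously $\chi_{\mathrm t},\chi_{\mathrm t,\Xsupp}$), verifies the orthogonality $\E\bigl[\chi_\Xsupp(\chi-\chi_\Xsupp)\bigr]=0$, and reads off $V=V_{\mathrm{eff}}+\Vb{\chi-\chi_\Xsupp}\ge V_{\mathrm{eff}}$. Your decomposition, when correct, has the advantage of producing an explicit integrand; the paper's route gives the inequality and the equality condition in one line without any AM--GM.

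There are, however, two genuine gaps in your argument.

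\textbf{ATE equality case.} Your footnote treats $a,b,c$ as constants and invokes ``unless $p$ is degenerate'' to force $a=b=0$. But $a=\Vbc{\mu_\Xsupp^0(X)}{\pi(X)}$, $b=\Vbc{\mu_\Xsupp^1(X)}{\pi(X)}$ and $c$ are \emph{functions of $\pi(X)$}, so the relation $ap^2=b(1-p)^2$ can hold for every $p$ with $a,b>0$. Concretely, if $\Delta_0=(1-\pi(X))g(X)$ and $\Delta_1=-\pi(X)g(X)$ for any $g$ with $\Ebc{g(X)}{\pi(X)}=0$, then the conditional covariance matrix of $(\Delta_0,\Delta_1)$ given $\pi(X)=p$ is proportional to $\bigl(\begin{smallmatrix}(1-p)^2 & -p(1-p)\\ -p(1-p) & p^2\end{smallmatrix}\bigr)$, and your integrand $ap/(1-p)+b(1-p)/p+2c$ vanishes identically while $a,b>0$. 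So pointwise vanishing of the integrand does \emph{not} by itself give \eqref{cond:semipara_eff:c2}. (In the paper's approach this is the condition $\chi=\chi_\Xsupp$, i.e.\ $(1-\pi(X))\Delta_1+\pi(X)\Delta_0=0$ a.s.)

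\textbf{ATT combination step.} Your claimed identity
\[
V_{\mathrm t}-V_{\mathrm t,\mathrm{eff}}=\tfrac{1}{p_1^2}\,\Eb{\pi(X)\bigl(ap/(1-p)+b(1-p)/p+2c\bigr)}
\]
is an arithmetic slip. Your own noise- and regression-term differences are $\tfrac{1}{p_1^2}\E\bigl[\tfrac{p^2a}{1-p}+pb\bigr]$ and $-\tfrac{1}{p_1^2}\E\bigl[p(a+b-2c)\bigr]$; adding them, the $pb$ terms \emph{cancel}, leaving
\[
V_{\mathrm t}-V_{\mathrm t,\mathrm{eff}}=\tfrac{1}{p_1^2}\,\Eb{\tfrac{p^2a}{1-p}-pa+2pc},
\]
not a $\pi(X)$-multiple of the ATE integrand. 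This expression depends on $a$ and $c$ only (no $b$), is \emph{not} pointwise nonnegative (take $p$ small and $c<0$), and is not controlled by your AM--GM bound. So the ATT argument as written does not go through; you would need a different route here---the paper's is to compute $\chi_{\mathrm t}-\chi_{\mathrm t,\Xsupp}$ directly and work with its variance.
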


Suppose that \eqref{cond:semipara_eff:c2} in \Cref{prop:semipara_eff} holds. Even then, in contrast to the ATE estimator $\thatpi$, the ATT estimator $\tthatpi$ only reaches $V_{\mathrm{t},\mathrm{eff}}$, the semiparametric efficiency bound for \emph{unknown} propensity scores, which is larger than the bound $V_{\mathrm{t},\mathrm{eff},\pi}$ for known propensity scores. The difference between them, under \eqref{cond:semipara_eff:c2}, is
\begin{align}
V_{\mathrm{t},\mathrm{eff}}-V_{\mathrm{t},\mathrm{eff},\pi}=\fr{1}{p_1^2}\Eb{\pi(X)(1-\pi(X))(\tau(\pi(X))-\taut)^2}\geq 0. \label{eq:efficiency_loss}
\end{align}
As $\pi(X)(1-\pi(X))\leq1/2$, the difference is bounded by $\fr{1}{2p_1^2}\Eb{(\tau(\pi(X))-\taut)^2}$. Thus, the more homogeneous the treatment effects are across $\pi(X)$ (equivalently, under \eqref{cond:semipara_eff:c2}, across $X$) and the treatment groups $D$, the smaller the difference is. 

The efficiency loss \eqref{eq:efficiency_loss} is not specific to caliper matching. In fact, the limiting variance of the ATT estimator in \cref{thm:asymnorm_known_propscore_att} is lower than that of the nearest neighbor matching estimator in \citet[Proposition 1]{abadie_matching_2016}. The difference is
\begin{align}
\fr{1}{2Mp_1^2}\Eb{\sigma_0^2(\pi(X))\pi(X)\left(2+\fr{\pi(X)}{1-\pi(X)}\right)}\geq 0, \label{eq:efficiency_diff}
\end{align}
where the \emph{constant} $M$ is the number of nearest neighbors to match. This shows that the efficiency gain \eqref{eq:efficiency_diff} of caliper matching is smaller for larger $M$. However, there is no proof that letting $M$ to infinity closes the gap as the results of \cite{abadie_matching_2016} are contingent on a fixed $M$. In contrast, with the caliper choice of \Cref{thm:asymnorm_known_propscore_att}, the number of matches for caliper matching goes to infinity by \Cref{prop:number_of_matches}, thereby cutting variance. Unless $p\mapsto \sigma_0^2(p)$ decreases rapidly around one, which is ruled out by \Cref{ass:variance}\ref{ass:variance:bound}, \eqref{eq:efficiency_diff} is larger when  the propensity score tends to be close to one. In that case, we gain even more by using caliper instead of nearest neighbor matching, although then $V_{\mathrm{t},\sigma,\pi}$, and thus $V_{\mathrm{t}}$, increases too. 

We close the case for the known propensity score by verifying the assumptions of \cref{thm:asymnorm_known_propscore_ate,thm:asymnorm_known_propscore_att} for the models of \cref{ex:admissible_models}.

\begin{proposition}[Admissible Models (Known Propensity Score)]
\label{prop:admissible_models_known_propscore}
The family of models described in \cref{ex:admissible_models} satisfies all \cref{ass:ucf,ass:propscoredist,ass:lip_regression,ass:variance}.
\end{proposition}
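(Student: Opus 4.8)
The plan is to verify each of \Cref{ass:ucf,ass:propscoredist,ass:variance,ass:lip_regression} in turn for the model class of \Cref{ex:admissible_models}, since the proposition is a pure ``assumptions check''. \Cref{ass:ucf} is immediate: the model stipulates $\nu_d \indep D \mid X$ and $Y^d = m_d(X) + \nu_d$, so $Y^d$ is a measurable function of $(X,\nu_d)$ and hence $Y^d \indep D \mid X$ for each $d\in\dummy$. The real work is in the other three, all of which concern the distribution of the scalar $\pi(X) = g(\theta_0^\intercal X)$ and the conditional moments of $Y$ given $(D,\pi(X))$.

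For \Cref{ass:propscoredist}, the key observation is that $\pi(X) = g(\theta_0^\intercal X)$ is a smooth strictly monotone transformation of the linear index $U \ceq \theta_0^\intercal X$. First I would argue that $U$ has a density: since $\theta_0$ has at least two nonzero coordinates and $X$ has a continuously differentiable density $\Xdensity$ supported on a compact set $\Xsupp\subset\real^K$, one can change variables so that $U$ is one coordinate of an invertible linear map of $X$, and integrating out the remaining $K-1$ coordinates yields a density $f_U$ for $U$ (here $K\ge2$ is used). Because $\Xsupp$ is compact and $g$ continuous, $U$ and hence $\pi(X)$ have compact support, say $[\underline p,\ba p]$; strict monotonicity and smoothness of $g$ together with $\inf g' $-type control on the compact index range, plus strict positivity of $\Xdensity$ on $\Xsupp$, give $0<\underline p<\ba p<1$ provided $g$ maps into the open interval $(0,1)$ on the relevant compact index set — I would note this is where one uses that $g:\real\to[0,1]$ with the index range bounded (via $\Theta$ bounded and $\Xsupp$ compact) keeps the propensity score strictly between $0$ and $1$. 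Then the conditional density $f_d$ of $\pi(X)$ given $D=d$ exists by Bayes' rule ($f_d(p) \propto \probc{D=d}{\pi(X)=p} f_{\pi(X)}(p)$, using $\pi(X)\in(0,1)$ bounded away from the endpoints so the conditioning factor is bounded above and below), has the same compact support $[\underline p,\ba p]$ for $d=0,1$, and is continuous and strictly positive there because $f_U$ (hence $f_{\pi(X)}$, via the smooth monotone change of variables with $g'$ bounded and nonvanishing on the compact index set) is continuous and strictly positive. I expect the main obstacle to be precisely this step: showing $f_U$ is \emph{continuous and strictly positive} on its support. Strict positivity of $\Xdensity$ on $\Xsupp$ does not automatically give strict positivity of the marginal $f_U$ at the boundary of its support unless the geometry of $\Xsupp$ is controlled (e.g.\ the fibers $\{x\in\Xsupp: \theta_0^\intercal x = u\}$ must have positive $(K-1)$-dimensional measure for every $u$ in the support interior); the proof presumably invokes compactness and the continuous differentiability of $\Xdensity$ together with a convexity or regularity property implicitly carried by ``compact support with continuously differentiable density,'' and this is the delicate point to get right.

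For \Cref{ass:lip_regression}: since $\Ebc{\nu_d}{X}=0$ and $\nu_d\indep D\mid X$, we have $\mu^d(p) = \Ebc{Y}{D=d,\pi(X)=p} = \Ebc{m_d(X)}{D=d,\pi(X)=p}$, which by unconfoundedness equals $\Ebc{m_d(X)}{\pi(X)=p}$. Because $m_d$ is continuously differentiable on the compact set $\Xsupp$ it is bounded and Lipschitz in $x$; combined with the fact (from the \Cref{ass:propscoredist} verification) that $p\mapsto$ (conditional law of $X$ given $\pi(X)=p$) is suitably smooth, $\mu^d$ is $C^1$ on the compact interval $[\underline p,\ba p]$, hence Lipschitz. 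One clean route: write $\mu^d$ as a ratio of integrals over the fiber of $\Xdensity$ against $m_d$, differentiate in $p$ using the smooth monotone reparametrisation $p = g(u)$ (so $g'$ bounded below away from $0$ on the compact index range keeps the derivative finite), and bound the derivative uniformly. For \Cref{ass:variance}: by the same conditioning, $\sigma_d^2(p) = \Vbc{Y}{D=d,\pi(X)=p} = \Ebc{\nu_d^2}{\pi(X)=p} + \Vbc{m_d(X)}{\pi(X)=p}$ (using $\Ebc{\nu_d}{X}=0$ and $\nu_d\indep D\mid X$); the first term is bounded below by $\inf_x \Ebc{\nu_d^2}{X=x}>0$ (assumed) and bounded above because $x\mapsto\Ebc{\nu_d^2}{X=x}$ is continuous on compact $\Xsupp$, and the second term is bounded above since $m_d$ is bounded on $\Xsupp$; this gives \Cref{ass:variance}\ref{ass:variance:bound}. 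For \Cref{ass:variance}\ref{ass:variance:varepsilon}, $\varepsilon = Y-\mu^D(\pi(X)) = \nu_D + (m_D(X)-\mu^D(\pi(X)))$, so by the $c_r$-inequality $\Ebc{\varepsilon^4}{D=d,\pi(X)=p} \lesssim \Ebc{\nu_d^4}{\pi(X)=p} + \Ebc{(m_d(X)-\mu^d(p))^4}{\pi(X)=p}$; the first is bounded uniformly because $x\mapsto\Ebc{\nu_d^4}{X=x}$ is continuous on compact $\Xsupp$, and the second because $m_d$ is bounded on $\Xsupp$, yielding the finite fourth-moment condition. Throughout I would lean on compactness of $\Xsupp$ and $\Theta$ and continuity/differentiability of all the ingredients to convert ``continuous on a compact set'' into uniform bounds; the only genuinely non-mechanical part remains the density-positivity argument in \Cref{ass:propscoredist}.
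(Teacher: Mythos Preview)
Your proposal is correct and follows essentially the paper's approach. The paper's own proof is a one-line pointer to the proof of \Cref{prop:admissible_models_estimated_propscore}, where the verification is carried out via explicit integral formulas after reducing to $K=2$: the density of $\theta_0^\intercal X$ is written as $\int_{\Xsupp_1}\Xdensity\!\left(x_1,\frac{t-\theta_{0,1}x_1}{\theta_{0,2}}\right)\deriv x_1$, the conditional densities $f_d$ and the conditional expectations $\Ebc{h(X)}{D=d,\theta_0^\intercal X=t}$ are expressed similarly (see \eqref{eq:exph_givendt}), and continuity, differentiability, and strict positivity are read off directly from these formulas together with the assumptions on $g$ and $\Xdensity$. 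Your abstract change-of-variables/Bayes/tower route is the same argument without writing the integrals out.

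Two small remarks. First, when you drop the conditioning on $D=d$ in $\mu^d$ and $\sigma_d^2$, the justification is the Rosenbaum--Rubin balancing property $X\indep D\mid\pi(X)$, not unconfoundedness; the paper in fact never uses this shortcut and keeps the $D=d$ conditioning throughout (its formulas are written for general $\theta\in\ntheta$, where balancing fails). Second, on the ``delicate point'' you flag about strict positivity of $f_U$ on its support: the paper does not treat this more carefully than you do --- it simply asserts that the explicit integral for $f_{\theta^\intercal X}(t)$ is strictly positive ``by assumptions on $\Xdensity$'', without discussing the fiber geometry at the boundary. So your concern is legitimate, but it is not a gap relative to the paper's own argument.
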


\subsection{Estimated Propensity Score}
\label{sec:asymptotics:subsec:pihat}

Suppose that the propensity score $\pi(\cdot,\theta_0)$ of \Cref{ass:parametric_propscore} is estimated. A reasonable estimator of $\theta_0$ will converge to $\theta_0$. We then expect that if local versions of \Cref{ass:propscoredist,ass:variance,ass:lip_regression} hold in the neighbourhood of $\theta_0$, then the caliper matching estimators on the estimated propensity scores will also be asymptotically normal, provided they are smooth enough in $\theta$.

To this end, we require the conditional distribution $F_{d,\theta}(p)\ceq \probcs{\theta_0}{\pi(X,\theta)\leq p}{D=d}$ to resemble that of the true propensity score, but only locally. Extending \cref{ass:propscoredist}, we need that the densities $f_{0,\theta}, f_{1,\theta}$ are not only continuous but differentiable, and that they depend smoothly on $\theta$. For some arbitrary fixed constant $\epsilon>0$, let $\ntheta\ceq \left\{\theta\in\Theta: \normu{\theta-\theta_0}<\epsilon \right\}$ denote a neighbourhood of $\theta_0$, and further let $$\mathcal{S}_{\theta_0,\epsilon}\ceq\left\{(\theta,p):p\in [\underline p_{\theta}, \ba p_{\theta}], \theta\in\ntheta \right\}.$$

 \begin{assumption}[Distribution of the Parametric Propensity Score]
\label{ass:propscoredist_estimated}
\begin{enumerate_noindent_newline}
\item \label{ass:propscoredist_estimated:density} $F_{0,\theta}$, $F_{1,\theta}$ admit densities $f_{0,\theta}, f_{1,\theta}$, respectively, for all $\theta\in\ntheta$. 

\item \label{ass:propscoredist_estimated:common_supp} $f_{0,\theta}, f_{1,\theta}$ have the same support $[\underline p_\theta, \ba p_\theta]$ with $0<\underline p_\theta< \ba p_\theta<1$ for all $\theta\in\ntheta$.

\item \label{ass:propscoredist_estimated:bounded_from_zero} 
$f_{0,\theta},f_{1,\theta}$ are bounded away from zero: $\inf_{\theta\in\ntheta}\inf_{p\in[\underline p_\theta, \ba p_\theta]}f_{d,\theta}(p)>0$ for all $d\in\dummy$.



\item \label{ass:propscoredist_estimated:differentiability_p_and_theta} $(\theta,p)\mapsto f_{d,\theta}(p)$ is continuously differentiable on $\mathcal{S}_{\theta_0,\epsilon}$ for all $d\in\dummy$.

\end{enumerate_noindent_newline}
\end{assumption}

Next, we decompose the outcome in a way that depends on the propensity score parameter $\theta$. Rather than the continuity of \cref{ass:lip_regression}, we need that the regression function $\mu^d(\theta, p)\ceq\Ebc{Y}{D=d, \pi(X,\theta)=p}$ is continuously differentiable, also in $\theta$. In combination with \cref{ass:parametric_propscore}, \cref{ass:differentiability_regression} implies that $\theta\mapsto \mu^d(\theta,\pi(x,\theta))$ can be approximated in the neighbourhood of $\theta_0$ with an error of the order $\normu{\theta-\theta_0}$. Specifically, they imply that the derivative of $\theta\mapsto \mu^d(\theta,\pi(x,\theta))$ exists for all $(\tld\theta,x)\in\ntheta\times\Xsupp$ and it takes the form $\Lambda^d(\tld\theta,x)\ceq  \fr{\partial \mu^d}{\partial \theta^\intercal}(\tld\theta,\pi(x, \tld\theta))+\fr{\partial \mu^d}{\partial p}(\tld\theta,\pi(x,\tld\theta))(\Deriv_\theta\pi)(x,\tld\theta)$.

\begin{assumption}[Differentiability of Regression Functions]
\label{ass:differentiability_regression}
The $(\theta,p)\mapsto \mu^d(\theta,p)$ are continuously differentiable on $\mathcal{S}_{\theta_0,\epsilon}$ with partial derivatives $\fr{\partial \mu^d}{\partial\theta}: \Theta \times [0,1]\to\real^K$ and $\fr{\partial \mu^d}{\partial p}:\Theta\times [0,1]\to\real$ uniformly bounded on $\mathcal{S}_{\theta_0,\epsilon}$ for all $d\in\dummy$.
\end{assumption}

To ensure the smoothness, and to control the magnitude of the disturbance term $\varepsilon_i(\theta)\ceq Y_i- \mu^{D_i}(\theta,\pi(X_i,\theta)),i\in[n]$, we require that the functions
\begin{align*}
\sigma_d^r(\theta, p)&\ceq \Ebc{(Y-\mu^D(\theta,p))^r}{D=d, \pi(X,\theta)=p},\quad r\in\left\{2,4\right\},\,d\in\dummy,
\end{align*}
satisfy the following conditions.

\begin{assumption}[Smooth Parametric Disturbance Term]
\label{ass:lip_conditional_var}
\begin{enumerate_noindent_nonewline}
\item \label{ass:lip_conditional_var:lip} The $\sigma_d^2$ satisfy the Lipschitz-condition $|\sigma_d^2(\theta,p)-\sigma_d^2(\theta',p')|\leq  L_\sigma (\normu{\theta-\theta'}+|p-p'|)$ for all $p\in[\underline p_\theta,\ba p_\theta]$ and $p'\in[\underline p_{\theta'},\ba p_{\theta'}]$ for all $\theta,\theta'\in\ntheta$ for some constant $0< L_\sigma<\infty$ and the lower bound $\inf_{p\in[\underline p_{\theta_0},\ba p_{\theta_0}]}\sigma_d^2(\theta_0,p)>0$ for all $d\in\{0,1\}$.
\item \label{ass:lip_conditional_var:bound} The $\sigma_d^4$ satisfy the condition 
$\sup_{\theta\in\ntheta}\sup_{p\in[\underline p_\theta,\ba p_\theta]}\sigma_d^4(\theta, p)<\infty$ for all $d\in\{0,1\}$.
\end{enumerate_noindent_nonewline}
\end{assumption}

Finally, we need that the estimator $\hatth$ of the propensity score parameter converges to $\theta_0$ in an appropriate sense. For instance, if $\hatth$ is the maximum likelihood estimator, it converges appropriately under regularity conditions. We further assume that $\theta_0$ is estimated from a sample that is independent of $((Y_i,D_i,X_i))_{i\in[n]}$. In practice, sample splitting may be applied to ensure the independence: one can halve a $2n$-large sample and use the first half to estimate $\theta_0$, and plug the resulting estimator $\hatth$ back into the second half to compute $\thatpihat,\tthatpihat$.

\begin{assumption}[Estimator of the Propensity Score Parameter]
\label{ass:theta_estimator}
\begin{enumerate_noindent_nonewline}
\item \label{ass:theta_estimator:asymnorm}  $\hatth$ is asymptotically normal with $\sqrt{n}(\hatth-\theta_0)\rsquig\mathcal{N}(0, V_{\theta_0})$ as $n\to\infty$ for a finite invertible matrix $V_{\theta_0}$.
\item \label{ass:theta_estimator:indep} $\hatth$ is independent of the data set from which the matching estimator is computed: $\hatth\indep ((Y_i, X_i,D_i))_{i\in[n]}$.
\end{enumerate_noindent_nonewline}
\end{assumption}
To accommodate the propensity score estimation, we introduce $$\maxminDeltahat\ceq\max_{i\in[n]}\min_{j\in[n]:D_j\neq D_i}|\pi(X_i,\hatth)-\pi(X_j,\hatth)|,$$ the estimated analogue of $\maxminDelta$, and the corresponding caliper choices
\begin{align}
\delta_n\ceq\caliperlognn &\quad\text{ or }\quad 
\delta_n\ceq\caliperDeltahat \label{eq:caliper_hat}
\end{align}
for any fixed constant $s>0$. \Cref{prop:number_of_matches_estimated} shows that the number of matches based on the estimated propensity scores and the caliper choice \eqref{eq:caliper_hat} is also of the order $\log n$ as in \Cref{prop:number_of_matches}. This yields \Cref{thm:asymnorm_estimated_propscore_ate,thm:asymnorm_estimated_propscore_att}, establishing the asymptotic normality of caliper matching on the estimated propensity score.

\begin{proposition}[Number of Matches for Estimated Propensity Score]
\label{prop:number_of_matches_estimated}
Suppose that the caliper $\delta_n$ satisfies \eqref{eq:caliper_hat}. If \Cref{ass:propscoredist_estimated,ass:theta_estimator} hold, then there exist constants $0<\ba c_l, \ba c_u<\infty$ such that
$$\ba c_l (1+\smallOP{1})\log n\leq \min_{i\in[n]}M_i(\hatth) \leq \max_{i\in[n]}M_i(\hatth)\leq \ba c_u (1+\smallOP{1})\log n$$
as $n\to\infty$. Thus, $\prob{\min_{i\in[n]}M_i(\hatth)\geq1}\to 1$ as $n\to\infty$.
\end{proposition}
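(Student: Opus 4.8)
The plan is to establish a version of \Cref{prop:number_of_matches} that holds \emph{uniformly} over $\theta$ in a closed neighbourhood $\mathcal{K}\subset\ntheta$ of $\theta_0$ (with $\theta_0$ interior to $\mathcal{K}$), and then to pass to $\theta=\hatth$ using the independence of $\hatth$ from the data. \Cref{ass:theta_estimator}\ref{ass:theta_estimator:asymnorm} yields $\hatth\convprob\theta_0$, hence $\prob{\hatth\notin\mathcal{K}}\to0$. Let $\mathcal{A}_n(\theta)$ denote the event that the double inequality $\ba c_l(1+\smallOP{1})\log n\leq \min_{i\in[n]}M_i(\theta)\leq \max_{i\in[n]}M_i(\theta)\leq \ba c_u(1+\smallOP{1})\log n$ \emph{fails}, with the caliper in $M_i(\theta)$ being $\delta_n$ from \eqref{eq:caliper_hat} but with $\maxminDeltahat$ replaced by its $\theta$-version $\max_{i\in[n]}\min_{j:D_j\neq D_i}|\pi(X_i,\theta)-\pi(X_j,\theta)|$. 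Since $\hatth$ is independent of $((Y_i,X_i,D_i))_{i\in[n]}$, conditioning on $\hatth$ gives
\[
\prob{\mathcal{A}_n(\hatth)}\;\leq\;\sup_{\theta\in\mathcal{K}}\prob{\mathcal{A}_n(\theta)}\;+\;\prob{\hatth\notin\mathcal{K}},
\]
so it suffices to pick $0<\ba c_l,\ba c_u<\infty$ with $\sup_{\theta\in\mathcal{K}}\prob{\mathcal{A}_n(\theta)}\to0$; the final assertion $\prob{\min_{i\in[n]}M_i(\hatth)\geq1}\to1$ then follows because $\ba c_l(1+\smallOP{1})\log n\to\infty$.

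For fixed $\theta\in\mathcal{K}$, I would rerun the proofs of \Cref{prop:maxmindistorder,prop:number_of_matches} with the true propensity score $\pi(X)$ replaced by the pseudo-score $\pi(X,\theta)$. Under the true law $\bbP_{\theta_0}$ the conditional distribution of $\pi(X,\theta)$ given $D=d$ is $F_{d,\theta}$, and \Cref{ass:propscoredist_estimated} says exactly that $F_{d,\theta}$ enjoys, \emph{uniformly over $\theta\in\mathcal{K}$}, all the features of $F_d$ that those proofs used: a density on a compact interval $[\underline p_\theta,\ba p_\theta]$ contained in a common compact subinterval of $(0,1)$, bounded below by a positive constant free of $\theta$ and (by continuity on a compact set) bounded above likewise. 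Expressing $M_i(\theta)$ through the empirical measures of $\{\pi(X_j,\theta):D_j=1-D_i\}$, the sandwich for $M_i(\theta)$ follows, just as for \Cref{prop:number_of_matches}, from ratio and tail bounds for empirical measures applied to the class of ``slab'' indicators $x\mapsto\indic{|\pi(x,\theta)-p|\leq\delta}$ indexed by $(\theta,p)\in\mathcal{K}\times[0,1]$ and $\delta>0$. Because $\{\pi(\cdot,\theta):\theta\in\mathcal{K}\}$ is a finite-dimensionally parametrized smooth family (\Cref{ass:parametric_propscore}, a standing assumption of this subsection; in the leading case $\pi(x,\theta)=g(\theta^\intercal x)$ of \Cref{ex:admissible_models} the slabs are intersections of two half-spaces), this class is of VC type with entropy bounds uniform in $\theta$, so $\ba c_l,\ba c_u$ and the $\smallOP{1}$ terms can be taken uniform over $\theta\in\mathcal{K}$.

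It remains to control the caliper. If $\delta_n=\caliperlognn$ it is deterministic and the previous step applies verbatim. For $\delta_n=\caliperDeltahat$ I would first prove the uniform analogue of \Cref{prop:maxmindistorder}, namely $\sup_{\theta\in\mathcal{K}}\E\big[\max_{i\in[n]}\min_{j:D_j\neq D_i}|\pi(X_i,\theta)-\pi(X_j,\theta)|\big]\leq c\,\fr{\log n}{n}$, by applying the spacing results of \cite{shorack_empirical_2009} to the order statistics of the i.i.d. draws with c.d.f. $F_{d,\theta}$; this is uniform in $\theta$ since $f_{d,\theta}$ is uniformly bounded away from $0$ and $\infty$ on $\mathcal{K}$. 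Combined with $N_0\wedge N_1\gtrsim n$ with probability tending to one (law of large numbers, using that $F_{d,\theta_0}$ is supported inside $(0,1)$, so $N_1/n$ converges to $p_1\in(0,1)$), this gives $\maxminDeltahat\vee\fr{\log N_0}{N_0+1}\vee\fr{\log N_1}{N_1+1}\asymp\fr{\log n}{n}$ with probability tending to one, uniformly over $\theta=\hatth\in\mathcal{K}$; hence $\delta_n$ is squeezed between two constant multiples of $\log n/n$, which is all the uniform version of \Cref{prop:number_of_matches} asks of the caliper, and the argument closes.

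The crux — the only genuinely non-routine point — is this uniformity in $\theta$: one must verify that the slab/indicator classes above are (uniformly over $\theta\in\mathcal{K}$) of VC or bracketing type, so that the ratio bounds of \cite{alexander_rates_1987} and \cite{van_der_vaart_weak_1996} invoked in \Cref{prop:number_of_matches} hold with $\theta$-free constants, and that the density bounds of \Cref{ass:propscoredist_estimated} are genuinely uniform on the closed neighbourhood. Everything else is a transcription of the known-propensity-score proofs, and the independence in \Cref{ass:theta_estimator}\ref{ass:theta_estimator:indep} turns the uniform statement into the unconditional one at no extra cost.
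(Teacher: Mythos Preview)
Your approach is essentially the paper's: condition on $\hatth$ (legitimate by \Cref{ass:theta_estimator}\ref{ass:theta_estimator:indep}), rerun \Cref{prop:maxmindistorder,prop:number_of_matches} with $\pi(X,\hatth)$ in place of $\pi(X)$, and use $\hatth\in\ntheta$ with probability tending to one. The paper packages the ratio-bound step as \Cref{lem:convergence_of_ratios}\ref{r_convergence_theta}--\ref{rcheck_convergence_theta}, whose proof is exactly your conditioning argument.

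One simplification worth noting: your worry about VC uniformity over $(\theta,p)$ for the slab class $x\mapsto\indic{|\pi(x,\theta)-p|\leq\delta}$ is more than is needed. Once you condition on $\hatth=\theta$, the data $(\pi(X_i,\theta))_{i:D_i=d}$ are i.i.d.\ \emph{real-valued}, and the indexing class is just $\{[p\mypm\delta]:p\in[\underline p_\theta,\ba p_\theta]\}$, which has VC dimension two regardless of $\theta$. The uniformity in $\theta$ then comes for free from the fact that the constants in the \cite{alexander_rates_1987} and \Citet[Theorem 2.14.9]{van_der_vaart_weak_1996} bounds are distribution-free; only the lower bound $\gamma_n\geq 2\delta_n\inf_{\theta\in\mathcal{K}}\inf_p f_{d,\theta}(p)$ enters, and that is controlled by \Cref{ass:propscoredist_estimated}\ref{ass:propscoredist_estimated:bounded_from_zero}. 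So you need not invoke any finite-dimensional parametrization of the slabs on $\mathcal{X}$.
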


\begin{theorem}[Asymptotic Normality for Estimated Propensity Score (ATE)]
\label{thm:asymnorm_estimated_propscore_ate}
Suppose that the caliper $\delta_n$ satisfies \eqref{eq:caliper_hat}. If \cref{ass:ucf,ass:parametric_propscore,ass:propscoredist_estimated,ass:differentiability_regression,ass:lip_conditional_var,ass:theta_estimator} all hold, then 
$$\sqrt{n}(\thatpihat-\tau)\rsquig\mathcal{N}(0,V_{\hat\pi})\quad\text{ as $n\to\infty$},$$ where $V_{\hat\pi}\ceq V_{\tau}+V_{\sigma,\pi}+(q_1-q_0)^\intercal V_{\theta_0}(q_1-q_0)$ for $V_\tau,V_{\sigma,\pi}$ of \Cref{thm:asymnorm_known_propscore_ate}, $V_{\theta_0}$ of \cref{ass:theta_estimator}, and $q_d\in\real^K$ arising as the probability limit $\nsumn{i}\Lambda^d(\hatth,X_i)\convprob q_d^\intercal$ as $n\to\infty$ for $d\in\dummy$.
 \end{theorem}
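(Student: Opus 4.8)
The plan is to reduce the statement to the known-propensity-score analysis of \cref{thm:asymnorm_known_propscore_ate}, carried out \emph{uniformly} over $\theta$ in the fixed neighbourhood $\ntheta$, and then to put back the estimation error of $\hatth$ by a first-order expansion together with the sample-splitting independence of \cref{ass:theta_estimator}\ref{ass:theta_estimator:indep}. For each fixed $\theta\in\ntheta$ the algebraic identity \eqref{eq:thatpi_decomp}--\eqref{eq:ydecompbicdo} holds verbatim with $\pi(\cdot)$ replaced by $\pi(\cdot,\theta)$, $\mu^d(\cdot)$ by $\mu^d(\theta,\cdot)$, $\varepsilon_i$ by $\varepsilon_i(\theta)\ceq Y_i-\mu^{D_i}(\theta,\pi(X_i,\theta))$, and $\calJC(i)$ by $\calJC_\theta(i)$; writing $\tau_\theta(p)\ceq\mu^1(\theta,p)-\mu^0(\theta,p)$ and $\tau_\theta\ceq\Eb{\tau_\theta(\pi(X,\theta))}$, one has $\tau_{\theta_0}=\tau$ by \cref{ass:ucf} and \eqref{eq:ident:ate}. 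Evaluating at $\theta=\hatth$ and centering at $\tau$ gives
\begin{align*}
\sqrt n\,(\thatpihat-\tau)
&= \underbrace{\sqrt n\Bigl(\nsumn{i}\tau_{\hatth}(\pi(X_i,\hatth))-\tau_{\hatth}\Bigr)}_{R_n^{(1)}}
+\underbrace{\sqrt n\,E(\hatth)}_{R_n^{(2)}} \\
&\quad{}+\underbrace{\sqrt n\,(\tau_{\hatth}-\tau)}_{R_n^{(3)}}
+\underbrace{\sqrt n\,B(\hatth)}_{R_n^{(4)}},
\end{align*}
where $R_n^{(1)},R_n^{(2)}$ are functionals of the matching sample evaluated at the fixed point $\hatth$ (with $E(\theta)$ centered by construction), $R_n^{(3)}$ is a deterministic smooth function of $\hatth$, and $R_n^{(4)}$ is a vanishing matching bias.

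First I would dispose of $R_n^{(4)}$: by \cref{prop:number_of_matches_estimated} one has $\min_{i\in[n]}M_i(\hatth)\geq1$ with probability tending to one, so on that event the unmatched-unit term in \eqref{eq:ydecompbicdo} vanishes, while the remaining term is at most $C\delta_n$ on $\{\hatth\in\ntheta\}$, with $C$ the uniform bound on $\partial\mu^d/\partial p$ from \cref{ass:differentiability_regression}; since $\sqrt n\,\delta_n=\smallOP{1}$ for the calipers in \eqref{eq:caliper_hat}, $R_n^{(4)}=\smallOP{1}$. Next, conditionally on the first-stage sample --- equivalently on $\hatth$ --- the sum $R_n^{(1)}+R_n^{(2)}$ is a functional of the matching sample alone, which by \cref{ass:theta_estimator}\ref{ass:theta_estimator:indep} retains its original distribution. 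Re-running the proof of \cref{thm:asymnorm_known_propscore_ate} with $\pi(\cdot,\theta)$ in the role of the propensity score --- the ratio bounds for the empirical measures formed from $\pi(X_i,\theta)$ that control $1/M_i(\theta)$ via the strict positivity in \cref{ass:propscoredist_estimated}\ref{ass:propscoredist_estimated:bounded_from_zero}, the maximal and tail bounds rendering the cross terms negligible, the central limit theorem for the leading average, and the asymptotic joint normality and independence of the leading average and $E(\theta)$, all of which go through uniformly over $\theta\in\ntheta$ because \cref{ass:propscoredist_estimated,ass:differentiability_regression,ass:lip_conditional_var} are uniform in $\theta$ --- I expect
\begin{align*}
\Ebc{\exp\!\bigl(\mathrm i\,t\,(R_n^{(1)}+R_n^{(2)})\bigr)}{\hatth=\theta}\ &\longrightarrow\ \exp\!\Bigl(-\tfrac12 t^2\bigl(V_\tau(\theta)+V_{\sigma,\pi}(\theta)\bigr)\Bigr)
\end{align*}
uniformly over $\theta\in\ntheta$, where $V_\tau(\theta)\ceq\Vb{\tau_\theta(\pi(X,\theta))}$ and $V_{\sigma,\pi}(\theta)$ is the $\theta$-analogue of $V_{\sigma,\pi}$ obtained from the same computation, both continuous at $\theta_0$ with $V_\tau(\theta_0)=V_\tau$ and $V_{\sigma,\pi}(\theta_0)=V_{\sigma,\pi}$ (here $\pi(X,\theta_0)=\pi(X)$ is used).

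Combining this display with $\hatth\convprob\theta_0$ and $\hatth\indep((Y_i,D_i,X_i))_{i\in[n]}$ by a conditioning argument on characteristic functions (the conditional characteristic function of $R_n^{(1)}+R_n^{(2)}$ given $\hatth$ converges, on the event $\{\hatth\in\ntheta\}$ of probability tending to one, to that of its Gaussian limit, while $\sqrt n(\hatth-\theta_0)$ is $\hatth$-measurable) yields the joint limit $(R_n^{(1)}+R_n^{(2)},\ \sqrt n(\hatth-\theta_0))\rsquig(Z_1,Z_2)$ with $Z_1\sim\mathcal N(0,V_\tau+V_{\sigma,\pi})$, $Z_2\sim\mathcal N(0,V_{\theta_0})$ and $Z_1\indep Z_2$. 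For $R_n^{(3)}$, differentiability of $\theta\mapsto\mu^d(\theta,\pi(x,\theta))$ with uniformly bounded derivative $\Lambda^d(\theta,x)$ (by \cref{ass:parametric_propscore,ass:differentiability_regression}) licenses interchanging differentiation and expectation, so $\theta\mapsto\tau_\theta$ is differentiable at $\theta_0$ with derivative $\Eb{\Lambda^1(\theta_0,X)-\Lambda^0(\theta_0,X)}$; a first-order Taylor expansion and $\sqrt n(\hatth-\theta_0)=\bigOP{1}$ give $R_n^{(3)}=\Eb{\Lambda^1(\theta_0,X)-\Lambda^0(\theta_0,X)}\sqrt n(\hatth-\theta_0)+\smallOP{1}$, and a uniform law of large numbers for $\Lambda^d$ over $\ntheta$ identifies the probability limit in the statement as $q_d^\intercal=\Eb{\Lambda^d(\theta_0,X)}$, whence $R_n^{(3)}=(q_1-q_0)^\intercal\sqrt n(\hatth-\theta_0)+\smallOP{1}$. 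Adding $R_n^{(4)}=\smallOP{1}$ and passing to the limit via the continuous mapping theorem and Slutsky gives $\sqrt n\,(\thatpihat-\tau)\rsquig Z_1+(q_1-q_0)^\intercal Z_2$, which by $Z_1\indep Z_2$ is $\mathcal N\bigl(0,\,V_\tau+V_{\sigma,\pi}+(q_1-q_0)^\intercal V_{\theta_0}(q_1-q_0)\bigr)=\mathcal N(0,V_{\hat\pi})$.

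The main obstacle is the uniform-in-$\theta$ upgrade of \cref{thm:asymnorm_known_propscore_ate}: every empirical-process estimate in that proof (ratio bounds of empirical to population measures, maximal inequalities for the cross terms, the Lindeberg conditions for the leading term) must be shown to hold uniformly over the index set $\ntheta$ --- equivalently, the relevant function classes, now indexed jointly by $\theta$ and by the matching radius, must stay of controlled complexity --- and this is precisely where the uniform-in-$\theta$ content of \cref{ass:propscoredist_estimated,ass:differentiability_regression,ass:lip_conditional_var} is consumed. Once that uniformity and the continuity of $\theta\mapsto V_\tau(\theta)+V_{\sigma,\pi}(\theta)$ at $\theta_0$ are in hand, the delta method for $R_n^{(3)}$, the bias bound for $R_n^{(4)}$, and the assembly through \cref{ass:theta_estimator}\ref{ass:theta_estimator:indep} are routine.
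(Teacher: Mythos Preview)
Your proof is correct and arrives at the same limit, but it is organised differently from the paper's. You split the leading term as $R_n^{(1)}+R_n^{(3)}$, centering the empirical average at the \emph{population} mean $\tau_{\hatth}$ under $\hatth$ and then handling $\sqrt n(\tau_{\hatth}-\tau)$ by a delta-method argument on the deterministic map $\theta\mapsto\tau_\theta$; this forces you to run the entire analysis of \cref{thm:asymnorm_known_propscore_ate} \emph{uniformly} over $\theta\in\ntheta$ (your acknowledged obstacle), and to invoke continuity of $V_\tau(\theta)+V_{\sigma,\pi}(\theta)$ at $\theta_0$. The paper instead expands the whole empirical average around $\theta_0$ from the start: it writes \eqref{eq:tauhat_thetahat_decomp1} as an i.i.d.\ sum at the \emph{true} $\theta_0$ plus a mean-value remainder $(\nsumn{i}[\Lambda^1(\tld\theta,X_i)-\Lambda^0(\tld\theta,X_i)])\sqrt n(\hatth-\theta_0)$, so the first piece is a single ordinary CLT (no uniformity in $\theta$) and the second is a LLN-plus-Slutsky step. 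For the error piece \eqref{eq:tauhat_thetahat_decomp2} the paper does not ``re-run Thm~1'' at a generic $\theta$ but applies a conditional martingale CLT with filtration containing $\hatth$ and verifies directly, via the Lipschitz condition in \cref{ass:lip_conditional_var}, that the conditional variance converges to the \emph{same} $V_{\sigma,\pi}$ as in \cref{thm:asymnorm_known_propscore_ate}; joint asymptotic independence of \eqref{eq:tauhat_thetahat_decomp1} and \eqref{eq:tauhat_thetahat_decomp2} then follows because \eqref{eq:tauhat_thetahat_decomp1} is $\filt_{n0}$-measurable for $\filt_{n0}=\sigma\{\Dvector,(\pi(X_i,\hatth))_i,\hatth\}$. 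Your route is conceptually clean --- treat $\hatth$ as a nuisance index, run the known-$\pi$ theorem at that index, then integrate out --- but buys this simplicity at the cost of the uniform-in-$\theta$ upgrade; the paper's route sidesteps that upgrade entirely by linearising in $\theta$ first, at the price of a slightly more hands-on martingale argument for \eqref{eq:tauhat_thetahat_decomp2}.
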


\begin{theorem}[Asymptotic Normality for Estimated Propensity Score (ATT)]
\label{thm:asymnorm_estimated_propscore_att}
Suppose that the caliper $\delta_n$ satisfies \eqref{eq:caliper_hat}. If \cref{ass:ucf,ass:parametric_propscore,ass:propscoredist_estimated,ass:differentiability_regression,ass:lip_conditional_var,ass:theta_estimator} all hold, then 
$$\sqrt{n}(\tthatpihat-\taut)\rsquig\mathcal{N}(0,V_{\mathrm{t},\hat\pi})\quad\text{ as $n\to\infty$},$$ where $V_{\mathrm{t},\hat\pi}\ceq V_{\taut}+V_{\mathrm{t},\sigma,\pi}+(1/p_1^2)(q_{\mathrm{t},1}-q_{\mathrm{t},0})^\intercal V_{\theta_0}(q_{\mathrm{t},1}-q_{\mathrm{t},0})$ for $V_{\taut}$, $V_{\mathrm{t},\sigma,\pi}$ and $p_1$ of \Cref{thm:asymnorm_known_propscore_att}, $V_{\theta_0}$ of \cref{ass:theta_estimator}, and $q_{\mathrm{t},d}\in\real^K$ arising as the probability limit $\nsumn{i}D_i\Lambda^d(\hatth,X_i)\convprob q_{\mathrm{t},d}^\intercal$ as $n\to\infty$ for $d\in\dummy$.
\end{theorem}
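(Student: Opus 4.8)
The plan is to follow the proof of \Cref{thm:asymnorm_estimated_propscore_ate}, grafting on the reweighting by $D_i$ and $1/N_1$ that already turned \Cref{thm:asymnorm_known_propscore_ate} into \Cref{thm:asymnorm_known_propscore_att}, and to exploit \Cref{ass:theta_estimator}\ref{ass:theta_estimator:indep} by conditioning on $\hatth$. First I would substitute $Y_i=\mu^{D_i}(\hatth,\pi(X_i,\hatth))+\varepsilon_i(\hatth)$ into $\tthatpihat$ to obtain, in parallel with the ATE decomposition \eqref{eq:thatpi_decomp}--\eqref{eq:ydecompbicdo}, a heterogeneity term $\fr{1}{N_1}\sum_{i\in[n]}D_i\bigl(\mu^1(\hatth,\pi(X_i,\hatth))-\mu^0(\hatth,\pi(X_i,\hatth))\bigr)$, a disturbance term $\fr{1}{N_1}\sum_{i\in[n]}\bigl(\indic{M_i(\hatth)>0}D_i-(1-D_i)w_i(\hatth)\bigr)\varepsilon_i(\hatth)$, and two bias terms. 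Using this $\hatth$-indexed decomposition is essential: the match sets $\calJC_{\hatth}(i)$ are defined through $\pi(\cdot,\hatth)$, so on $\calJC_{\hatth}(i)$ the matching error is genuinely $O(\delta_n)$ in terms of $\pi(\cdot,\hatth)$, whereas a $\theta_0$-indexed decomposition would leave an $\bigOP{n^{-1/2}}$ matching error that survives multiplication by $\sqrt n$. By \Cref{prop:number_of_matches_estimated}, $\prob{\min_{i\in[n]}M_i(\hatth)\geq1}\to1$, so with probability tending to one all indicators are one and the discarded-units bias vanishes; the imperfect-match bias is $O(\delta_n)$ by the uniform-in-$\theta$ Lipschitz bound on $p\mapsto\mu^d(\theta,p)$ from \Cref{ass:differentiability_regression} and the constraint $|\pi(X_j,\hatth)-\pi(X_i,\hatth)|\leq\delta_n$ on $\calJC_{\hatth}(i)$, so $\sqrt n$ times it is $O(s\log n/\sqrt n)=\smallOP{1}$ for either caliper in \eqref{eq:caliper_hat}, invoking the estimated analogue of \Cref{prop:maxmindistorder} for the data-dependent choice.

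Next I would run the conditional central limit theorem for the matching-sample part. Conditioning on $\hatth$, the matching sample is i.i.d.\ by \Cref{ass:theta_estimator}\ref{ass:theta_estimator:indep}, and when $\hatth=\vartheta\in\ntheta$ the conditional law $F_{d,\vartheta}$ and the conditional moments $\sigma_d^r(\vartheta,\cdot)$ satisfy the hypotheses of \Cref{thm:asymnorm_known_propscore_att} uniformly in $\vartheta$ by \Cref{ass:propscoredist_estimated,ass:lip_conditional_var}; moreover $\varepsilon_i(\vartheta)$ has conditional mean zero given $(D_i,\pi(X_i,\vartheta))$, which is the property the disturbance-term analysis uses. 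Re-running the proof of \Cref{thm:asymnorm_known_propscore_att} with $\pi(\cdot,\vartheta)$ in place of $\pi(\cdot)$, with the ratio and tail bounds for empirical measures made uniform over $\vartheta$ in a fixed neighbourhood of $\theta_0$ so that they apply to the random argument $\vartheta=\hatth$, gives that, conditionally on $\hatth$, $\sqrt n$ times the sum of the heterogeneity and disturbance terms recentred at $\taut(\hatth)\ceq\Ebc{\mu^1(\hatth,\pi(X,\hatth))-\mu^0(\hatth,\pi(X,\hatth))}{D=1}$ is asymptotically $\mathcal{N}(0,V_{\mathrm t}(\hatth))$, where $\vartheta\mapsto V_{\mathrm t}(\vartheta)$ is continuous on $\ntheta$ with $V_{\mathrm t}(\theta_0)=V_{\mathrm t}$. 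Since this limiting variance is continuous at $\theta_0$ and $\hatth\convprob\theta_0$, the unconditional limit of the matching-sample part is $\mathcal{N}(0,V_{\mathrm t})$.

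The remaining piece is the estimation term. Taylor-expanding $\mu^d(\hatth,\pi(X_i,\hatth))=\mu^d(\theta_0,\pi(X_i,\theta_0))+\Lambda^d(\bar\theta_i,X_i)^\intercal(\hatth-\theta_0)$ with $\bar\theta_i$ between $\hatth$ and $\theta_0$ (equivalently, expanding $\taut(\vartheta)$ around $\theta_0$) and noting $\mu^d(\theta_0,\pi(\cdot,\theta_0))=\mu^d(\pi(\cdot))$, the $\theta_0$-part reproduces the known-$\pi$ ATT heterogeneity term already accounted for through $\taut(\theta_0)=\taut$, and the remainder is $\fr{1}{N_1}\sum_{i\in[n]}D_i\bigl(\Lambda^1(\bar\theta_i,X_i)-\Lambda^0(\bar\theta_i,X_i)\bigr)^\intercal(\hatth-\theta_0)$. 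Using $\bar\theta_i\convprob\theta_0$, continuity and uniform boundedness of $\Lambda^d$ (\Cref{ass:differentiability_regression,ass:parametric_propscore}), $N_1/n\convprob p_1$, and the probability limits $\nsumn{i}D_i\Lambda^d(\hatth,X_i)\convprob q_{\mathrm t,d}^\intercal$, this equals $\fr{1}{p_1}(q_{\mathrm t,1}-q_{\mathrm t,0})^\intercal(\hatth-\theta_0)+\smallOP{n^{-1/2}}$, so $\sqrt n$ times it converges, by \Cref{ass:theta_estimator}\ref{ass:theta_estimator:asymnorm}, to $\mathcal{N}\bigl(0,\fr{1}{p_1^2}(q_{\mathrm t,1}-q_{\mathrm t,0})^\intercal V_{\theta_0}(q_{\mathrm t,1}-q_{\mathrm t,0})\bigr)$. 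This term is $\hatth$-measurable, hence independent of the matching sample by \Cref{ass:theta_estimator}\ref{ass:theta_estimator:indep}; since the conditional limit law of the matching-sample part does not depend on $\hatth$, the two converge jointly to independent Gaussians, and $\sqrt n(\tthatpihat-\taut)$, being their sum plus $\smallOP{1}$, converges to $\mathcal{N}(0,V_{\mathrm t,\hat\pi})$ with $V_{\mathrm t,\hat\pi}=V_{\taut}+V_{\mathrm t,\sigma,\pi}+\fr{1}{p_1^2}(q_{\mathrm t,1}-q_{\mathrm t,0})^\intercal V_{\theta_0}(q_{\mathrm t,1}-q_{\mathrm t,0})$.

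The hard part will be making the empirical-process machinery behind \Cref{thm:asymnorm_known_propscore_att} uniform over $\vartheta$ in a neighbourhood of $\theta_0$: the match sets $\calJC_{\hatth}(i)$, the weights $w_i(\hatth)$, and all the intervening function classes are now indexed by the random $\vartheta=\hatth$, so the ratio and tail bounds for empirical measures must hold simultaneously over that family, after which one must pass from conditional-on-$\hatth$ convergence (with $\hatth$ only eventually in $\ntheta$) to unconditional convergence and to joint convergence with the independent term of the third paragraph. A secondary technical point is controlling the Taylor remainder in $\hatth-\theta_0$: once averaged over $i$ and multiplied by $\sqrt n$ it must be $\smallOP{1}$, which relies on the continuity---not mere existence---of the derivatives assumed in \Cref{ass:propscoredist_estimated,ass:differentiability_regression,ass:parametric_propscore} together with $\normu{\hatth-\theta_0}=\bigOP{n^{-1/2}}$.
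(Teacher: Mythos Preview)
Your proposal is correct and follows the same strategy as the paper, whose proof of this theorem is the single line ``Follows those of \Cref{thm:asymnorm_known_propscore_att} and \Cref{thm:asymnorm_estimated_propscore_ate}.'' Your plan is precisely the intended combination: graft the ATT reweighting of \Cref{thm:asymnorm_known_propscore_att} onto the estimated-$\pi$ machinery of \Cref{thm:asymnorm_estimated_propscore_ate}.

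One organizational difference is worth noting. The paper (in the ATE proof you are adapting) Taylor-expands the \emph{sample} heterogeneity term $\fr{1}{N_1}\sum_i D_i\bigl(\mu^1(\hatth,\pi(X_i,\hatth))-\mu^0(\hatth,\pi(X_i,\hatth))\bigr)$ directly, splitting it into a $\theta_0$-indexed piece (variance $V_{\taut}$) plus the remainder $\fr{1}{N_1}\sum_i D_i(\Lambda^1-\Lambda^0)(\hatth-\theta_0)$; the disturbance term is then handled by the conditional martingale CLT (\Cref{lem:conditional_mclt}) with $\filt_{n0}=\sigma\{\Dvector,(\pi(X_i,\hatth))_i,\hatth\}$, and joint convergence comes from the $\filt_{n0}$-measurability of the heterogeneity piece. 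You instead center the whole matching-sample part at the \emph{population} quantity $\taut(\hatth)$, obtain a conditional-on-$\hatth$ CLT with variance $V_{\mathrm t}(\hatth)\to V_{\mathrm t}$, and then Taylor-expand $\taut(\hatth)-\taut$. Both routes are valid and yield the same limiting variance; yours requires the extra (easy) step of checking continuity of $\vartheta\mapsto V_{\mathrm t}(\vartheta)$, while the paper's avoids this by anchoring at $\theta_0$ from the outset. Your identification of the ``hard part''---uniformity of the ratio and tail bounds over $\vartheta\in\ntheta$---is exactly what the paper packages as \Cref{lem:convergence_of_ratios}\ref{r_convergence_theta}--\ref{rcheck_convergence_theta}, so no new work is needed there.
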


Compared to \Cref{thm:asymnorm_known_propscore_ate,thm:asymnorm_known_propscore_att}, the variances are increased by $(q_1-q_0)^\intercal V_{\theta_0}(q_1-q_0)$ for the ATE estimator $\thatpihat$, and by $(1/p_1^2)(q_{\mathrm{t},1}-q_{\mathrm{t},0})^\intercal V_{\theta_0}(q_{\mathrm{t},1}-q_{\mathrm{t},0})$ for the ATT estimator $\tthatpihat$, representing the uncertainty from the propensity score estimation. 
 The more precisely we can estimate the propensity score, the smaller $V_{\theta_0}$ is, resulting in smaller differences. Alternatively, if $q_1\approx q_0$ or $q_{\mathrm{t},1}\approx q_{\mathrm{t},0}$, then the respective increments  are also small. This is the case if the derivatives $\Lambda^1$ and $\Lambda^0$ are close to each other, although it is difficult to see if and when that happens, even for simple linear regressions in \Cref{ex:admissible_models}. 
As a consequence, it remains unclear whether caliper or nearest neighbor matching \citep{abadie_matching_2016} is more efficient when the parametric propensity score is estimated. 

\begin{remark}[Variance Comparison]
The asymptotic variances of  $\thatpi,\tthatpi$ and  $\thatpihat,\tthatpihat$ are comparable as in the preceding paragraph if and only if we use only half of a $2n$-large sample to compute $\thatpi,\tthatpi$ and the caliper \eqref{eq:caliper}, because of the sample-splitting in the computation of $\thatpihat,\tthatpihat$ and \eqref{eq:caliper_hat}. If we use the whole $2n$-large sample to compute $\thatpi,\tthatpi$, \eqref{eq:caliper}, and only $n$ observations to evaluate $\thatpihat,\tthatpihat$, \eqref{eq:caliper_hat} --- with the remaining $n$ observations reserved to estimate $\theta_0$ ---, then the standard error of $\thatpi$ is $\sqrt{\fr{V_{\tau}+V_{\sigma,\pi}}{2n}}$, while that of $\thatpihat$ is $\sqrt{\fr{V_{\tau}+V_{\sigma,\pi}+(q_1-q_0)^\intercal V_{\theta_0}(q_1-q_0)}{n}}$, which is an increment by a factor of $\sqrt{2}$ even without the contribution of $(q_1-q_0)^\intercal V_{{\theta_0}}(q_1-q_0)$. The same applies to $\tthatpi$ and $\tthatpihat$. 
\end{remark}

\cite{abadie_matching_2016} account for the estimation of the propensity score by considering a shifted law of $(Y,D,X)\sim \bbP_{\theta_0}$. They assume that conditional expectations under the shifted law converge weakly to conditional expectations under the nonshifted law. We pursue a different approach. Our \Cref{ass:propscoredist_estimated,ass:differentiability_regression,ass:lip_conditional_var} do not involve shifted laws. Rather, they impose smoothness of conditional expectations in $\theta$ and may be regarded as local versions of \Cref{ass:propscoredist,ass:variance,ass:lip_regression} in the neighbourhood of $\theta_0$. Moreover, we verify the assumptions of \cref{thm:asymnorm_estimated_propscore_ate,thm:asymnorm_estimated_propscore_att} for the models in \cref{ex:admissible_models}.

\begin{proposition}[Admissible Models (Estimated Propensity Score)]
\label{prop:admissible_models_estimated_propscore}
Consider the family of models described in \cref{ex:admissible_models}, with the propensity score model $\{\pi(x,\theta)=g(\theta^\intercal x):\theta\in\Theta\}$ estimated with maximum likelihood on an independent $n$-large i.i.d.\ sample from the distribution of $(D,X)$. Then \cref{ass:ucf,ass:parametric_propscore,ass:propscoredist_estimated,ass:differentiability_regression,ass:lip_conditional_var,ass:theta_estimator} are all satisfied.
\end{proposition}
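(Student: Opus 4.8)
The plan is to verify \Cref{ass:ucf,ass:parametric_propscore,ass:propscoredist_estimated,ass:differentiability_regression,ass:lip_conditional_var,ass:theta_estimator} in turn for the class of \Cref{ex:admissible_models}. Three are immediate. \Cref{ass:ucf} holds because, given $X$, $Y^d=m_d(X)+\nu_d$ is a deterministic translate of $\nu_d$ and $\nu_d\indep D\mid X$. \Cref{ass:parametric_propscore} holds with $\pi(x,\theta)=g(\theta^\intercal x)$: part (i) is assumed, and $\theta\mapsto g(\theta^\intercal x)$ is differentiable (as $g\in C^2$) with derivative $g'(\theta^\intercal x)\,x$, bounded over $x\in\Xsupp$ by $(\sup_t g'(t))\sup_{x\in\Xsupp}\normu{x}<\infty$. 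For \Cref{ass:theta_estimator}, part (ii) is built into the sample-splitting construction, and part (i) is the standard asymptotic normality of the generalised linear model maximum likelihood estimator with link $g$: the per-observation log-likelihood is $C^2$ in $\theta$ on $\Theta$ because $g(\theta^\intercal x)$ lies in a fixed compact subinterval of $(0,1)$ for $(\theta,x)\in\Theta\times\Xsupp$ (here $g$ is continuous, strictly increasing, $\real$-valued into $[0,1]$, and $\Theta,\Xsupp$ compact); its score and Hessian are dominated by constants there; $\theta_0$ is interior; and the Fisher information $\E\big[g'(\theta_0^\intercal X)^2\big(g(\theta_0^\intercal X)(1-g(\theta_0^\intercal X))\big)^{-1}XX^\intercal\big]$ is finite and positive definite, its scalar weight being bounded and bounded away from zero and $\E[XX^\intercal]$ nonsingular (since $\Xdensity>0$ on the compact $\Xsupp$, which therefore has nonempty interior, so $X$ is not supported on a hyperplane). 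This delivers $\sqrt n(\hatth-\theta_0)\rsquig\mathcal{N}(0,V_{\theta_0})$ with $V_{\theta_0}$ finite and invertible.

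The substance is \Cref{ass:propscoredist_estimated,ass:differentiability_regression,ass:lip_conditional_var}, all of which concern the law of the linear index $Z_\theta\ceq\theta^\intercal X$ and conditional moments given $Z_\theta$. I would reduce everything to $Z_\theta$ via the change of variables $p=g(t)$: since $g$ is a $C^2$ diffeomorphism of $\real$ onto an open subinterval of $(0,1)$ with $g'>0$, the density $f_{d,\theta}$ of $\pi(X,\theta)$ and the functions $\mu^d(\theta,\cdot)$, $\sigma_d^r(\theta,\cdot)$ are obtained from their $Z_\theta$-analogues by composing with $g^{-1}$ and dividing by $g'\circ g^{-1}$, which preserves joint $C^1$-regularity, positivity, and boundedness. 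It then suffices, for $\theta$ near $\theta_0$ (so small that $\theta\neq0$, possible since $\theta_0$ has nonzero coordinates), to show that the density $f_{Z_\theta}$ of $Z_\theta$ and the conditional expectations $(\theta,t)\mapsto\E[\varphi(X)\mid D=d,\,Z_\theta=t]$ for $\varphi\in\{1,\,m_d,\,m_d^2,\,x\mapsto\E[\nu_d^2\mid X=x]\}$ are jointly $C^1$, positive, and suitably bounded. Writing $Z_\theta=\normu{\theta}\,e_1^\intercal(R_\theta X)$ for a rotation $R_\theta$ depending smoothly on $\theta$ near $\theta_0$, $f_{Z_\theta}$ is, up to the smooth factor $\normu{\theta}^{-1}$, a marginal of the $C^1$ density $\Xdensity$ over the $K-1\geq1$ directions orthogonal to $\theta$, and each conditional expectation is a ratio of two such marginals (the numerator carrying the extra weight $\varphi\cdot g(\theta_0^\intercal\cdot)^d(1-g(\theta_0^\intercal\cdot))^{1-d}$); differentiating under the integral and using compactness of $\Xsupp$ together with continuity of $\Xdensity$, $\nabla\Xdensity$, $m_d$, and $x\mapsto\E[\nu_d^2\mid X=x]$ gives the claimed joint $C^1$-regularity, hence \Cref{ass:propscoredist_estimated}(i),(iv) and the differentiability required in \Cref{ass:differentiability_regression,ass:lip_conditional_var}. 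The support endpoints $\underline p_\theta=g(\min_{x\in\Xsupp}\theta^\intercal x)$ and $\bar p_\theta=g(\max_{x\in\Xsupp}\theta^\intercal x)$ lie strictly inside $(0,1)$ and vary continuously with $\theta$ (parametric extremum of a continuous function over a compact set), which is \Cref{ass:propscoredist_estimated}(ii); and the densities are bounded away from zero on the compact $\mathcal{S}_{\theta_0,\epsilon}$ because $\Xdensity$ is continuous and strictly positive on $\Xsupp$, the slices have $(K-1)$-volume bounded below, and the $D=d$ conditioning divides by $\E[g(\theta_0^\intercal X)^d(1-g(\theta_0^\intercal X))^{1-d}\mid Z_\theta=t]$, itself bounded away from zero since $g(\theta_0^\intercal X)$ stays in a fixed compact subinterval of $(0,1)$, which is \Cref{ass:propscoredist_estimated}(iii).

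For \Cref{ass:lip_conditional_var} I would expand $(Y-\mu^D(\theta,p))^r=((m_D(X)-\mu^D(\theta,p))+\nu_D)^r$ and use $\E[\nu_d\mid X,D]=\E[\nu_d\mid X]=0$ to kill the terms linear in $\nu_D$, so that $\sigma_d^2(\theta,p)=\E[(m_d(X)-\mu^d(\theta,p))^2\mid D=d,\pi(X,\theta)=p]+\E[\nu_d^2\mid D=d,\pi(X,\theta)=p]$ is a sum of two $C^1$ functions of the kind just treated, hence Lipschitz on the compact $\mathcal{S}_{\theta_0,\epsilon}$, with $\sigma_d^2(\theta_0,p)\geq\inf_{x\in\Xsupp}\E[\nu_d^2\mid X=x]>0$; for $r=4$ only finiteness is needed, and this follows from boundedness of $m_d$ and $\mu^d$ on compacts, Hölder's inequality, and $\sup_x\E[\nu_d^4\mid X=x]<\infty$.

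The main obstacle is the joint $(\theta,t)$-regularity of these marginals \emph{up to} the endpoints of the moving support $[\underline p_\theta,\bar p_\theta]$, and the attendant uniformity over $\theta$: a density strictly positive on the compact $\Xsupp$ is discontinuous across $\partial\Xsupp$, so differentiating a marginal of it can pick up a contribution from the moving slice boundary $\Xsupp\cap\{\theta^\intercal x=t\}$ that must be shown to depend continuously on $(\theta,t)$. I would control this by parametrising $\partial\Xsupp$ near each slice via the implicit function theorem, bounding the resulting boundary term, and then upgrading to uniformity over $\{\theta:\normu{\theta-\theta_0}\leq\epsilon\}$ by compactness and a covering argument; this is the step that genuinely uses the geometry of $\Xsupp$. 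The remaining pieces --- the $g$-reparametrisation bookkeeping, the Hölder bounds for the fourth moments, and invoking the generalised linear model maximum likelihood limit theorem --- are routine given the boundedness and compactness already imposed in \Cref{ex:admissible_models}.
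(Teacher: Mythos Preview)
Your proposal is correct and takes essentially the paper's route: reduce to the linear index $Z_\theta=\theta^\intercal X$ via $p=g(t)$, express $f_{d,\theta}$, $\mu^d(\theta,\cdot)$ and $\sigma_d^r(\theta,\cdot)$ as integrals over the slices $\{x\in\Xsupp:\theta^\intercal x=t\}$ weighted by $g(\theta_0^\intercal x)^d(1-g(\theta_0^\intercal x))^{1-d}$, and read off the required $C^1$-regularity and positivity from the hypotheses on $g,\Xdensity,m_d,\nu_d$. The paper fixes $K=2$ and uses the explicit substitution $x_2=(t-\theta_1x_1)/\theta_2$ (its formula \eqref{eq:exph_givent}) where you use a rotation $R_\theta$, but these are equivalent parametrisations of the same integral; your explicit identification of the boundary-regularity obstacle---differentiating a marginal of a density that jumps across $\partial\Xsupp$---is in fact a point the paper's proof simply elides.
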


\subsection{Variance Estimation}
\label{sec:asymptotics:subsec:variance_estimation}

In this section, we provide consistent estimators for the components of $V_{\hat\pi}$ and $V_{\mathrm{t},\hat\pi}$ so that we can construct asymptotically valid confidence intervals for ATE and ATT. To prove consistency, we impose some further assumptions, which are all in accordance with the models in \Cref{ex:admissible_models}.

Namely, we need that certain estimators are almost surely bounded, which is implied if the outcome is almost surely bounded. Furthermore, $\theta\mapsto \pi(\cdot,\theta)$ may take many forms in general, which renders $\Lambda^d$ intractable. Requiring that the propensity score follows a single-index model, such as the logit or the probit, and that the covariates have a well-behaved density, alleviates these difficulties, provided the outcome regression is smooth enough. Imposing $K\geq 2$ continuously distributed covariates and certain smoothness conditions implies that $\Lambda^d$ is expressible in a way suitable for showing consistency.

 \begin{assumption}[Outcome and Covariate Distribution]
\label{ass:covar_outcome_distribution}
\begin{enumerate_noindent_nonewline}
\item \label{ass:covar_outcome_distribution:outcome} The outcome is almost surely bounded: there exists a constant $0<\ba y<\infty$ such that $\prob{|Y|>\ba y}=0$.
\item \label{ass:covar_outcome_distribution:covariate} The covariate vector $X$ has at least $K\geq2$ coordinates, and $X$ admits a density $\Xdensity$ on the compact $\mathcal{X}$; the $\Xdensity$ is as specified in \Cref{ex:admissible_models}.
\end{enumerate_noindent_nonewline}
\end{assumption}

 \begin{assumption}[Single-Index Propensity Score and Smooth Outcome Regression]
\label{ass:singleindex_propscore_smooth_outcome}
\begin{enumerate_noindent_nonewline}
\item \label{ass:singleindex_propscore_smooth_outcome:propscore} The propensity score model of \Cref{ass:parametric_propscore} is $\pi(x,\theta)=g(\theta^\intercal x)$ for $g$ as specified in \Cref{ex:admissible_models}.
\item \label{ass:singleindex_propscore_smooth_outcome:regression} The $m(x)\ceq\Ebc{Y}{X=x}$ is bounded, and there exist two covariates --- $X_1$ and $X_2$ without loss of generality --- such that $\fr{\partial m}{\partial x_1}$ and $\fr{\partial m}{\partial x_2}$ are well-defined and continuous for all $x\in\mathcal{X}$.
\end{enumerate_noindent_nonewline}
\end{assumption}

The variance estimators are 
\begin{align}
\hat V_{\hat\pi} &\ceq \hat V_{\tau}+\hat V_{\sigma,\pi}+(\hat q_1-\hat q_0)^\intercal \hat V_{\theta_0}(\hat q_1-\hat q_0), \label{eq:vpihat_estimator} \\
\hat V_{\mathrm{t},\hat\pi}&\ceq \hat V_{\taut}+\hat V_{\mathrm{t},\sigma,\pi}+(1/\hat p_1^2)(\hat q_{\mathrm{t},1}-\hat q_{\mathrm{t},0})^\intercal \hat V_{\theta_0}(\hat q_{\mathrm{t},1}-\hat q_{\mathrm{t},0}), \label{eq:vtpihat_estimator}
\end{align}
where the component estimators are as follows. We assume that $\hat V_{\theta_0}\convprob V_{\theta_0}$ is a consistent estimator of $V_{\theta_0}$. In practice, $\hatth$ is usually the maximum likelihood estimator, as supported by \Cref{prop:admissible_models_estimated_propscore}, in which case, under \Cref{ass:singleindex_propscore_smooth_outcome},
\begin{align*}
\hat V_{\theta_0}\ceq \left(\fr{1}{n}\sum_{i\in[n]} \fr{(g'(\hatth^\intercal X_i))^2}{g(\hatth^\intercal X_i)(1-g(\hatth^\intercal X_i))}X_iX_i^\intercal  \right)^{-1}
\end{align*} 
is well-known to be consistent for $V_{\theta_0}$. The $p_1$ is consistently estimated with $\hat p_1\ceq \nsumn{i}D_i$ by the law of large numbers. The nonparametric estimators of the remaining components in \eqref{eq:vpihat_estimator} and \eqref{eq:vtpihat_estimator} are  developed in \Cref{app:sec:variance_estimation}.

\begin{proposition}[Consistent Variance Estimators]
\label{prop:consistent_variance}
Suppose that \cref{ass:ucf,ass:parametric_propscore,ass:propscoredist_estimated,ass:differentiability_regression,ass:lip_conditional_var,ass:theta_estimator,ass:covar_outcome_distribution,ass:singleindex_propscore_smooth_outcome} all hold, that the caliper $\delta_n$ satisfies \eqref{eq:caliper_hat}, and that $\hat V_{\theta_0}\convprob V_{\theta_0}$ as $n\to\infty$. Then $\hat V_{\hat\pi}\convprob V_{\hat\pi}$ and $\hat V_{\mathrm{t},\hat\pi}\convprob V_{\mathrm{t},\hat\pi}$ as $n\to\infty$. In particular, the estimators on the right side of \eqref{eq:vpihat_estimator} and \eqref{eq:vtpihat_estimator}  are all consistent for their respective estimands.
\end{proposition}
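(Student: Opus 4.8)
The plan is to reduce the claim to the consistency of the individual component estimators, dispatch the trivial components, and then treat the matching-based pieces by re-using the empirical-process machinery of \Cref{thm:asymnorm_estimated_propscore_ate,thm:asymnorm_estimated_propscore_att}. \emph{Reduction.} The limiting variances $V_{\hat\pi}$ and $V_{\mathrm{t},\hat\pi}$ are continuous functions of the component parameters, and $\hat p_1$ enters $V_{\mathrm{t},\hat\pi}$ only through $1/\hat p_1^2$ with $p_1=\E\pi(X)\geq\underline p>0$ by \Cref{ass:propscoredist_estimated}\ref{ass:propscoredist_estimated:common_supp}; so, by the continuous mapping theorem and Slutsky's lemma, it suffices to prove $\hat V_{\tau}\convprob V_\tau$, $\hat V_{\sigma,\pi}\convprob V_{\sigma,\pi}$, $\hat V_{\taut}\convprob V_{\taut}$, $\hat V_{\mathrm{t},\sigma,\pi}\convprob V_{\mathrm{t},\sigma,\pi}$, and $\nsumn{i}\hat\Lambda^d(X_i)\convprob q_d^\intercal$, $\nsumn{i}D_i\hat\Lambda^d(X_i)\convprob q_{\mathrm{t},d}^\intercal$ for $d\in\dummy$. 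Here $\hat V_{\theta_0}$ is consistent by hypothesis and $\hat p_1=\nsumn{i}D_i\convprob p_1$ by the weak law of large numbers. Throughout, we condition on $\hatth$, which is legitimate by the independence in \Cref{ass:theta_estimator}\ref{ass:theta_estimator:indep}, and use that $\hatth\in\ntheta$ with probability tending to one, so the local smoothness hypotheses of \Cref{ass:propscoredist_estimated,ass:differentiability_regression,ass:lip_conditional_var} apply with constants uniform over $\ntheta$ and the matching sample behaves as if $\pi(\cdot,\hatth)$ were a fixed propensity score at a generic point of $\ntheta$.

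\emph{Variance-type terms.} For $\hat V_{\sigma,\pi}$ and $\hat V_{\mathrm{t},\sigma,\pi}$ the building block is the matching estimator of $\sigma_d^2(\theta,\cdot)$ at each unit (a within-match-set dispersion of outcomes, as constructed in \Cref{app:sec:variance_estimation}). Its bias is controlled by the Lipschitz property of \Cref{ass:lip_conditional_var}\ref{ass:lip_conditional_var:lip} together with $|\pi(X_j,\hatth)-\pi(X_i,\hatth)|\leq\delta_n\to0$, and its sampling error by the order-$\log n$ lower bound on $M_i(\hatth)$ from \Cref{prop:number_of_matches_estimated}; the ratio and tail bounds for empirical processes already invoked in the proofs in \Cref{app:sec:proofs} upgrade this to control uniform over $i\in[n]$. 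Averaging with the weights $1/(1-\pi(X_i,\hatth))$ and $1/\pi(X_i,\hatth)$ --- bounded away from $0$ and $\infty$ by \Cref{ass:propscoredist_estimated}\ref{ass:propscoredist_estimated:common_supp} --- and applying the law of large numbers, with \Cref{ass:covar_outcome_distribution}\ref{ass:covar_outcome_distribution:outcome} supplying the integrability, gives $\hat V_{\sigma,\pi}\convprob V_{\sigma,\pi}$ and $\hat V_{\mathrm{t},\sigma,\pi}\convprob V_{\mathrm{t},\sigma,\pi}$. For $\hat V_\tau$, writing the matched contrast at unit $i$ as $\tau(\pi(X_i))$ plus a bias of order $\delta_n$ plus a mean-zero matching-noise term --- exactly the decomposition \eqref{eq:thatpi_decomp}--\eqref{eq:ydecompbicdo} underlying \Cref{thm:asymnorm_known_propscore_ate} --- shows that the empirical variance of these contrasts converges to $V_\tau$ plus the average conditional variance induced by the matching noise, which the correction term subtracted in the definition of $\hat V_\tau$ (itself a $\sigma$-type quantity, estimated as above) cancels in the limit; the same computation reweighted by $\pi(X_i,\hatth)$ and normalised by $\hat p_1^2$ yields $\hat V_{\taut}\convprob V_{\taut}$.

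\emph{Derivative-type terms.} Under \Cref{ass:singleindex_propscore_smooth_outcome}, with $\pi(x,\theta)=g(\theta^\intercal x)$, the function $\Lambda^d$ has the explicit single-index form $\Lambda^d(\theta,x)=\fr{\partial\mu^d}{\partial\theta^\intercal}(\theta,g(\theta^\intercal x))+\fr{\partial\mu^d}{\partial p}(\theta,g(\theta^\intercal x))\,g'(\theta^\intercal x)\,x^\intercal$, and having $K\geq2$ continuously distributed covariates (\Cref{ass:covar_outcome_distribution}\ref{ass:covar_outcome_distribution:covariate}) with $m$ smooth in at least two coordinates makes the two partial derivatives identifiable and estimable from a nonparametric (for instance, local-polynomial) regression of $Y$ on $X$ over the compact $\Xsupp$. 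Standard uniform consistency of such derivative estimators, evaluated at $\hatth$, combined once more with the law of large numbers for the outer average and the boundedness in \Cref{ass:covar_outcome_distribution}\ref{ass:covar_outcome_distribution:outcome}, gives $\nsumn{i}\hat\Lambda^d(X_i)\convprob q_d^\intercal$ and $\nsumn{i}D_i\hat\Lambda^d(X_i)\convprob q_{\mathrm{t},d}^\intercal$. Together with the reduction and $\hat V_{\theta_0}\convprob V_{\theta_0}$ this proves the proposition.

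\emph{Main obstacle.} The delicate step is the variance-type family, in particular $\hat V_\tau$ and $\hat V_{\taut}$: one must show that the matching noise in the matched contrasts adds to the empirical variance an amount that is both explicit and consistently estimable, uniformly over $i\in[n]$, which does not follow from a plain law of large numbers and forces the reuse of the empirical-process ratio and tail bounds together with the order-$\log n$ control of $M_i(\hatth)$ from \Cref{prop:number_of_matches_estimated}; a secondary technical point is keeping all constants uniform over $\hatth\in\ntheta$, which the sample-splitting in \Cref{ass:theta_estimator}\ref{ass:theta_estimator:indep} and the ``neighbourhood'' form of \Cref{ass:propscoredist_estimated,ass:differentiability_regression,ass:lip_conditional_var} are designed to make routine.
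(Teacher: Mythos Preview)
Your proposal rests on a misreading of what the estimators in \Cref{app:sec:variance_estimation} actually are. You treat $\hat V_\tau$, $\hat V_{\sigma,\pi}$, and $\hat q_d$ as built from matching --- ``within-match-set dispersion,'' ``matched contrasts,'' empirical variance with a matching-noise correction --- and then plan to control them by reusing the ratio/tail bounds and the $\log n$ lower bound on $M_i(\hatth)$. But the paper's variance components are \emph{kernel} estimators: $\hat\mu^d$, $\hat\mu_2^d$, $\hat\sigma_d^2$ are Nadaraya--Watson regressions with Gaussian kernel $K$ and bandwidth $\gamma_n\asymp n^{-\beta}$, truncated to the shrinking interval $\hat A_n$; the derivative pieces $\widehat{(\partial\mu^d/\partial p)}$ and $\widehat{(\partial\mu^d/\partial\theta_k)}$ are built from $K'$. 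In particular, $\hat V_\tau$ is $\frac{1}{\ntruncate}\sum_i[\hat\mu^1-\hat\mu^0]^2\indichat{i}-\thatpihat^2$, not an empirical variance of matched contrasts, and the subtracted term is simply $\thatpihat^2$ (second moment minus squared mean), not a noise-cancelling $\sigma$-type correction. Your entire ``variance-type'' paragraph is arguing consistency of estimators the paper does not use.

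Consequently, the real technical work the paper's proof has to do is entirely absent from your plan: (i) showing that $A_n$ and $\hat A_n$ are well-defined and that the truncation is asymptotically innocuous ($\ntruncate/n\convprob 1$, handled via the order statistics and a conditioning identity for $\probc{g(\hatth^\intercal X_i)\leq\underline G_{\hatth}+a_n}{\hatth,\underline G_{\hatth}}$); (ii) establishing uniform-in-$p\in A_n$ consistency of $\hat h$, $\hat q_\mu$, $\hat q_\mu'$, $\hat h'$, $\qdotkhat$, $\hdotkhat$ conditionally on $\hatth$, which the paper does via a Fourier/characteristic-function argument in the style of Bierens for the stochastic part and an integration-by-parts plus Schuster-type boundary analysis for the bias part (this is where the conditions $\beta<1/4$ and $a_n/\gamma_n\to\infty$ are used); and (iii) exploiting the single-index form through the explicit integral representation \eqref{eq:exph_givent}--\eqref{eq:exph_givendt} to identify $\qdotk$ as a $p$-derivative of a tractable integral, which is what makes the $\theta$-derivative estimable without a multivariate kernel on $X$. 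None of these steps can be replaced by the matching lemmas you invoke; \Cref{prop:number_of_matches_estimated} and the empirical-process ratio bounds play no role here.
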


In view of \cref{thm:asymnorm_estimated_propscore_ate,thm:asymnorm_estimated_propscore_att}, an immediate implication is that we can construct asymptotic confidence intervals for ATE and ATT. Let $z_{1-\alpha/2}$ be the $(1-\alpha/2)$th quantile of the standard normal distribution for $\alpha\in(0,1)$, and let $[a\mypm b]$ denote the interval $[a-b,a+b]$ for $a,b\in\real$, $b\geq0$. Then the intervals $[\thatpihat \mypm z_{1-\alpha/2} (\hat V_{\hat\pi}/n)^{1/2}]$ and $[\tthatpihat \mypm z_{1-\alpha/2} (\hat V_{\mathrm{t},\hat\pi} /n)^{1/2}]$ are asymptotically valid confidence intervals for ATE and ATT, respectively.

\begin{corollary}[Asymptotic Confidence Intervals]
\label{coro:confidence_intervals}
Suppose that \cref{ass:ucf,ass:parametric_propscore,ass:propscoredist_estimated,ass:differentiability_regression,ass:lip_conditional_var,ass:theta_estimator,ass:covar_outcome_distribution,ass:singleindex_propscore_smooth_outcome} all hold, that the caliper $\delta_n$ satisfies \eqref{eq:caliper_hat}, and that $\hat V_{\theta_0}\convprob V_{\theta_0}$ as $n\to\infty$. Then $\prob{\left[\thatpihat \mypm z_{1-\alpha/2} \sqrt{\fr{\hat V_{\hat\pi}}{n}} \right]\ni \tau}\to 1-\alpha$ and $\prob{\left[\tthatpihat \mypm z_{1-\alpha/2} \sqrt{\fr{\hat V_{\mathrm{t},\hat\pi}}{n}} \right]\ni \taut}\to 1-\alpha$ as $n\to\infty$.
\end{corollary}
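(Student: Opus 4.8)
The plan is to derive \Cref{coro:confidence_intervals} as a routine consequence of \Cref{thm:asymnorm_estimated_propscore_ate,thm:asymnorm_estimated_propscore_att} and \Cref{prop:consistent_variance}, via Slutsky's lemma and the continuity of the Gaussian distribution function. I will treat the ATE interval; the ATT interval follows by the identical argument with $\thatpihat,\tau,\hat V_{\hat\pi}$ replaced by $\tthatpihat,\taut,\hat V_{\mathrm{t},\hat\pi}$.

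First I would check that the limiting variance $V_{\hat\pi}$ is strictly positive, so that studentisation is asymptotically well defined. Since $V_{\hat\pi}=V_\tau+V_{\sigma,\pi}+(q_1-q_0)^\intercal V_{\theta_0}(q_1-q_0)\geq V_{\sigma,\pi}$, and $\pi(X)$ takes values in $[\underline p_{\theta_0},\ba p_{\theta_0}]\subset(0,1)$ by \Cref{ass:propscoredist_estimated} while $\sigma_d^2(\theta_0,\cdot)$ is bounded away from zero on that interval by \Cref{ass:lip_conditional_var}\ref{ass:lip_conditional_var:lip}, we get $V_{\sigma,\pi}=\Eb{\sigma_0^2(\pi(X))/(1-\pi(X))+\sigma_1^2(\pi(X))/\pi(X)}>0$; the same bound gives $V_{\mathrm{t},\hat\pi}\geq V_{\mathrm{t},\sigma,\pi}>0$. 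Combined with $\hat V_{\hat\pi}\convprob V_{\hat\pi}$ from \Cref{prop:consistent_variance} and the continuous mapping theorem, this yields $\hat V_{\hat\pi}^{-1/2}\convprob V_{\hat\pi}^{-1/2}$.

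Next I would apply Slutsky's lemma: from $\sqrt{n}(\thatpihat-\tau)\rsquig\mathcal N(0,V_{\hat\pi})$ (\Cref{thm:asymnorm_estimated_propscore_ate}) and $\hat V_{\hat\pi}^{-1/2}\convprob V_{\hat\pi}^{-1/2}$ it follows that the studentised statistic $T_n\ceq\sqrt{n/\hat V_{\hat\pi}}\,(\thatpihat-\tau)$ converges in distribution to $\mathcal N(0,1)$. The coverage event is then rewritten exactly: $\tau\in[\thatpihat\mypm z_{1-\alpha/2}(\hat V_{\hat\pi}/n)^{1/2}]$ if and only if $|T_n|\leq z_{1-\alpha/2}$. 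Since the standard normal law assigns no mass to $\{\pm z_{1-\alpha/2}\}$, the boundary of the closed interval $[-z_{1-\alpha/2},z_{1-\alpha/2}]$ is a null set, so the portmanteau lemma gives $\prob{|T_n|\leq z_{1-\alpha/2}}\to\Phi(z_{1-\alpha/2})-\Phi(-z_{1-\alpha/2})=1-\alpha$, which is the first claim; the second is obtained verbatim from \Cref{thm:asymnorm_estimated_propscore_att,prop:consistent_variance}.

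I do not expect a real obstacle here: all of the substantive work --- the asymptotic normality with the explicit variance formulae and the consistency of the plug-in variance estimators --- is already established in \Cref{thm:asymnorm_estimated_propscore_ate,thm:asymnorm_estimated_propscore_att,prop:consistent_variance}. The only points needing a sentence of care are the strict positivity of $V_{\hat\pi}$ and $V_{\mathrm{t},\hat\pi}$ (to legitimise dividing by $\hat V_{\hat\pi}$ and $\hat V_{\mathrm{t},\hat\pi}$ asymptotically) and the continuity of the limiting normal distribution function at $\pm z_{1-\alpha/2}$ (so that convergence in distribution delivers the exact limit $1-\alpha$ rather than a one-sided inequality), both of which are immediate.
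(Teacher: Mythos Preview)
Your proposal is correct and matches the paper's approach: the paper does not give a separate proof of \Cref{coro:confidence_intervals} but presents it as an immediate consequence of \Cref{thm:asymnorm_estimated_propscore_ate,thm:asymnorm_estimated_propscore_att} and \Cref{prop:consistent_variance}, which is exactly the Slutsky-plus-continuous-mapping argument you spell out. Your additional sentence verifying $V_{\hat\pi},V_{\mathrm{t},\hat\pi}>0$ via \Cref{ass:lip_conditional_var}\ref{ass:lip_conditional_var:lip} and \Cref{ass:propscoredist_estimated} is a welcome detail the paper leaves implicit.
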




\section{Conclusion}
\label{sec:conclusion}

We study the caliper matching estimator when matching is performed on the (estimated) propensity scores. We propose a caliper, and prove that the resulting estimator of the Average Treatment Effect (ATE), and of the Average Treatment Effect on the Treated (ATT), is asymptotically unbiased and normal. 

When the propensity score is known, our estimator of ATE reaches the semiparametric lower bound in the restricted model where only the propensity scores and not the covariates are observed in the sample or where the outcome regression on the covariates only depend on the propensity score. In this restricted model, the estimator of ATT only reaches the larger lower bound corresponding to unknown propensity score.  Even in the unrestricted model, both our estimators are more efficient than nearest neighbor matching estimators on the known propensity scores, and are, therefore, preferred over the latter method in the large sample limit, provided our assumptions hold. When the parametric propensity score is estimated, the variances of both our estimators increase, hence it remains unclear whether caliper or nearest neighbor matching will be more efficient. 

We facilitate empirical application of the estimator by verifying our assumptions for a family of often employed models, and by constructing asymptotic confidence intervals for the average treatment effects. An interesting avenue for future research is to study in-sample estimation of the propensity score, and to allow for nonparametric propensity score estimators. The main challenge arising is to see how uncertainty from the propensity score estimation propagates to the matching estimator, which is more difficult to quantify for nonparametric models.




\newpage
\begin{appendix}



\section{Variance Estimation}
\label{app:sec:variance_estimation}
In this section, we define the nonparametric variance estimators of \Cref{sec:asymptotics:subsec:variance_estimation} for the components in \eqref{eq:vpihat_estimator} and \eqref{eq:vtpihat_estimator}. Let $K(u)=(2\pi)^{-1/2}e^{-u^2/2},u\in\real,$ be the Gaussian kernel and $K'(u)$ its derivative. Let $0<\gamma_n\lesssim a_n$ be two arbitrary sequences $\gamma_n\ceq\kappa_0 n^{-\beta}$, $a_n\ceq\kappa_1 n^{-\alpha}$ for fixed finite constants $0<\alpha<\beta<1/4$ and $\kappa_0,\kappa_1>0$.  We employ a truncation strategy to avoid bias at the boundaries. Define the intervals $A_n\ceq [\uphatth+a_n,\bphatth-a_n]$ and $\hat A_n\ceq [\min_{i\in[n]}g(\hatth^\intercal X_i)+a_n,\max_{i\in[n]}g(\hatth^\intercal X_i)-a_n]$, which are well-defined with probability tending to one as $\hatth\convprob\theta_0$ under \Cref{ass:theta_estimator}; see the proof of \Cref{prop:consistent_variance}.\footnote{In practice, especially for moderate sample sizes, $\gamma_n$ and $a_n$ should be chosen carefully to ensure nonnegative variance estimates. The $a_n$ should be chosen small enough to enlarge $A_n,\hat A_n$; for instance, one could choose $\kappa_1$ arbitrary close to zero and $\alpha\ceq1/(4+\varepsilon_\alpha)$ for an $\varepsilon_\alpha>0$ arbitrarily close to zero. As a rule, $\gamma_n$ should be set small too to minimise the bias of the variance estimates by standard nonparametric theory, thereby avoiding negative values; to accommodate $\alpha<\beta$, one can set $\beta=1/(4+\varepsilon_\beta)$ with $0<\varepsilon_\beta<\varepsilon_\alpha$, for example, $\varepsilon_\beta\ceq\varepsilon_\alpha/2$. The $\kappa_0$ should be chosen to accommodate the different scales of $(g(\hatth^\intercal X_i))_{i\in[n]}$ and $(\hatth^\intercal X_i)_{i\in[n]}$ present in the estimation of the $\mu^d$ and their derivate, respectively. A small $\kappa_0$ is a safe but conservative choice.
Note that asymptotically the effect of truncation disappears ($\E \ntruncate/n\to 1$) as shown in \Cref{prop:consistent_variance}.}  Let $\ntruncate\ceq \sum_{i\in[n]}\indic{g(\hatth^\intercal X_i)\in\hat A_n}$.
The estimators of the first components are
\begin{align}
\hat V_\tau & \ceq \left(\fr{1}{\ntruncate}\sum_{i\in[n]}[\hat \mu^1(\hatth,g(\hatth^\intercal X_i))-\hat\mu^0(\hatth,g(\hatth^\intercal X_i))]^2\indic{g(\hatth^\intercal X_i)\in\hat A_n}\right)-\thatpihat^2, \label{eq:hatVtau} \\
\hat V_{\taut} &\ceq \fr{1}{\hat p_1^2} \left(\fr{1}{\ntruncate}\sum_{i\in[n]}D_i[\hat \mu^1(\hatth,g(\hatth^\intercal X_i))-\hat\mu^0(\hatth,g(\hatth^\intercal X_i))]^2\indic{g(\hatth^\intercal X_i)\in\hat A_n}\right)-\fr{\tthatpihat^2}{\hat p_1}, \nonumber
\end{align}
where
\begin{align}
\hat \mu^d(\theta,p) &\ceq \fr{\hat q_{\mu,d}(\theta, p)}{\hat h_d(\theta,p)},\nonumber  \quad
\hat q_{\mu,d}(\theta, p) \ceq  \fr{1}{N_d\gamma_n}\sum_{j\in[n]}\indic{D_j=d}Y_jK\left(\fr{g(\theta^\intercal X_j)-p}{\gamma_n}\right), \nonumber \\
\hat h_d(\theta,p) &\ceq \hat f_{\theta,d}(p) \ceq \fr{1}{N_d\gamma_n}\sum_{j\in[n]}\indic{D_j=d}K\left(\fr{g(\theta^\intercal X_j)-p}{\gamma_n}\right),\quad d\in\dummy; \nonumber
\end{align}
and those of the second components are
\begin{align*}
\hat V_{\sigma,\pi} \ceq &\, \fr{1}{\ntruncate}\sum_{i\in[n]}\left(\fr{\hat\sigma_0^2(\hatth,g(\hatth^\intercal X_i))}{1-g(\hatth^\intercal X_i)} + \fr{\hat\sigma_1^2(\hatth,g(\hatth^\intercal X_i))} {g(\hatth^\intercal X_i)} \right)\indic{g(\hatth^\intercal X_i)\in\hat A_n}, \\
\hat V_{\mathrm{t},\sigma,\pi} \ceq&\, \fr{1}{\hat p_1^2\ntruncate}\sum_{i\in[n]}\Bigg(\fr{g(\hatth^\intercal X_i)^2\hat\sigma_0^2(\hatth,g(\hatth^\intercal X_i))}{1-g(\hatth^\intercal X_i)}+ g(\hatth^\intercal X_i) \hat\sigma_1^2(\hatth,g(\hatth^\intercal X_i))  \Bigg) \indic{g(\hatth^\intercal X_i)\in\hat A_n},
\end{align*}
where 
\begin{align*}
\hat\sigma_d^2(\theta,p) &\ceq \hat\mu_2^d(\theta, p) - (\hat\mu^d(\theta,p))^2, \quad \hat\mu_2^d(\theta, p) \ceq \fr{\hat q_{\mu_2,d}(\theta, p)}{\hat h_d(\theta,p)}, \\
\hat q_{\mu_2,d}(\theta, p) &\ceq  \fr{1}{N_d\gamma_n}\sum_{j\in[n]}\indic{D_j=d}Y_j^2K\left(\fr{g(\theta^\intercal X_j)-p}{\gamma_n}\right), \quad d\in\dummy, \nonumber
\end{align*}
is an estimator of $\sigma_d^2(\theta,p)=\mu_2^d(\theta,p)-(\mu^d(\theta,p))^2$ with 
$$\mu_2^d(\theta,p)\ceq\Ebc{Y^2}{D=d,\pi(X,\theta)=p},\quad d\in\dummy. \nonumber$$
Last, the probability limits of the derivatives are estimated by
\begin{align*}
\hat q_d^\intercal &\ceq \fr{1}{\ntruncate}\sum_{i\in[n]}\hat \Lambda^d(\hatth,X_i)\indic{g(\hatth^\intercal X_i)\in\hat A_n},\quad \hat q_{\mathrm{t},d}^\intercal \ceq \fr{1}{\ntruncate}\sum_{i\in[n]}D_i\hat \Lambda^d(\hatth,X_i)\indic{g(\hatth^\intercal X_i)\in\hat A_n}, \\
\hat \Lambda^d(\theta,x) &\ceq \widehat{\left(\fr{\partial \mu^d}{\partial \theta^\intercal}\right)}(\theta,g(\theta^\intercal x)) + \widehat{\left(\fr{\partial \mu^d}{\partial p}\right)}(\theta,g(\theta^\intercal x))g'(\theta^\intercal x)x^\intercal,  
\end{align*}
where
\begin{align*}
\widehat{\left(\fr{\partial \mu^d}{\partial \theta_k}\right)}(\theta,p)& \ceq \fr{ \widehat{\left(\fr{\partial q_{\mu,d}}{\partial \theta_k}\right)}(\theta,p)\hat h_d(\theta,p)-\hat q_{\mu,d}(\theta,p)\widehat{\left(\fr{\partial h_d}{\partial \theta_k}\right)}(\theta,p)}{(\hat h_d(\theta,p))^2}, \\
\widehat{\left(\fr{\partial q_{\mu,d}}{\partial \theta_k}\right)}(\theta,p) &\ceq \fr{(\ginv)'(p)}{N_d\gamma_n^2}\sum_{j\in[n]}\indic{D_j=d}Y_jX_{j,k}K'\left(\fr{\theta^\intercal X_j-\ginv(p)}{\gamma_n} \right), \\
\widehat{\left(\fr{\partial h_d}{\partial \theta_k}\right)}(\theta,p) &\ceq \fr{(\ginv)'(p)}{N_d\gamma_n^2}\sum_{j\in[n]}\indic{D_j=d}X_{j,k}K'\left(\fr{\theta^\intercal X_j-\ginv(p)}{\gamma_n} \right),
\end{align*}
with $\theta_k$ ($X_{j,k}$) being the $k$th coordinate of $\theta$ ($X_j$) for $k\in[K]$, and
\begin{align*}
\widehat{\left(\fr{\partial \mu^d}{\partial p}\right)}(\theta,p)& \ceq \fr{\left(\fr{\partial}{\partial p}\hat q_{\mu,d}(\theta,p)\right)\hat h_d(\theta,p)-\hat q_{\mu,d}(\theta,p)\fr{\partial}{\partial p}\hat h_{d}(\theta,p)}{(\hat h_d(\theta,p))^2}, \\
\fr{\partial}{\partial p}\hat q_{\mu,d}(\theta,p) &= -\fr{1}{N_d\gamma_n^2}\sum_{j\in[n]}\indic{D_j=d}Y_jK'\left(\fr{g(\theta^\intercal X_j)-p}{\gamma_n}\right), \\
\fr{\partial}{\partial p}\hat h_d(\theta,p) &=  -\fr{1}{N_d\gamma_n^2}\sum_{j\in[n]}\indic{D_j=d}K'\left(\fr{g(\theta^\intercal X_j)-p}{\gamma_n}\right)
\end{align*}
for $d\in\dummy$. An intercept in the propensity score model can be accommodated by defining $X_{j,K+1}\ceq 1$ for $j\in[n]$ and considering derivatives with respect to $\theta_{K+1}$ too. 




\section{Proofs}
\label{app:sec:proofs}


In this section, we prove the main results, \Cref{prop:maxmindistorder,prop:number_of_matches,thm:asymnorm_known_propscore_ate,thm:asymnorm_known_propscore_att,prop:admissible_models_known_propscore,prop:number_of_matches_estimated,thm:asymnorm_estimated_propscore_ate,thm:asymnorm_estimated_propscore_att,prop:admissible_models_estimated_propscore} together with supporting \Cref{lem:convergence_of_ratios,lem:error,lem:lindeberg_feller_bound,lem:efficiency}. The proofs of \Cref{prop:semipara_eff,prop:consistent_variance} and \Cref{lem:spacingsmoments,lem:exponential_orderstats_moments,lem:conditional_mclt} are in the \hyperref[app:online]{Supplement}. For simplicity, we give the proofs for the caliper choice $\delta_n=\caliperlognn$, $s\ceq1$, and provide remarks for the choices $\delta_n=\caliperDelta$ and $\delta_n=\caliperDeltahat$ when necessary.

We adopt the following notation. Let $D^{(n)}\ceq (D_i)_{i\in[n]}$ and $\Pvector\ceq (\pi(X_i))_{i\in[n]}$. For $a,b\in\real, a<b$, let 
\begin{align}
F_d[a,b]&\ceq\probc{\pi(X)\in[a,b]}{D=d}, \nonumber \\
\mathbbm{F}_{N_d}[a,b]&\ceq\fr{1}{N_d}\sum_{i:D_i=d}\indic{\pi(X_i)\in[a,b]}, \label{eq:def_measures}
\end{align}
be the conditional (empirical) measures of intervals $[a,b]$ for $d\in\dummy$. Similarly, under \Cref{ass:parametric_propscore}, define the conditional (empirical) measures
\begin{align}
F_{d,\theta}[a,b]&\ceq\probc{\pi(X,\theta)\in[a,b]}{D=d}, \nonumber \\
\mathbb{F}_{N_d, \theta}[a,b]&\ceq \fr{1}{N_d}\sum_{i:D_i=d}\indic{\pi(X_i,\theta)\in [a,b]}\quad\text{ for $\theta\in\Theta$.}\label{eq:def_measures_theta}
\end{align}
Let $\gproc_{N_d}\ceq \sqrt{N_d}(\mathbbm{F}_{N_d}-F_d)$ be the empirical process of $((\pi(X_{i}))_{i:D_i=d}\mid \Dvector)\overset{\text{i.i.d.}}{\sim}F_d$, and $[a\mypm b]$ denote the interval $[a-b,a+b]$. 
In the proofs, the value of constants may change from equation to equation without explicit notice.

\begin{proof}[Proof of \Cref{prop:maxmindistorder}]

\emph{Spacings and Their Order.} Let $U_{(1)}\leq U_{(2)}\leq \ldots\leq U_{(N_1)}$ be the order statistics of $(U_1,\ldots, U_{N_1}\mid \Dvector)\overset{\text{i.i.d.}}{\sim}\text{Uniform}(0,1)$. Let $\tilde U_1\ceq U_{(1)}$, $\tilde U_i\ceq U_{(i)}-U_{(i-1)}$ for $i=2,\ldots,N_1$ and $\tilde U_{N_1+1}\ceq 1-U_{(N_1)}$ be the spacings generated by $(U_i)_{i\in[N_1]}$. Let $\tilde U_{(1)}\leq \tilde U_{(2)}\leq\ldots\leq \tilde U_{(N_1+1)}$ be the ordered spacings. \citet[Chapter 21]{shorack_empirical_2009} prove that $\Ebc{\tilde U_{(N_1+1)}}{\Dvector}=\fr{1}{N_1+1}\sum_{i=1}^{N_1+1}\fr{1}{N_1+2-i}$, which we apply as follows.

\emph{Bounding $\E\maxminDelta$ with Spacings.} Let $\minDelta_i\ceq\min_{j:D_j\neq D_i}|\pi(X_i)-\pi(X_j)|$ for $i\in[n]$, so that $\maxminDelta\leq\sum_{d\in\dummy}\max_{i:D_i=d}\minDelta_i$. Consider $\max_{i:D_i=0}\minDelta_i$. Let $\pi_{(1)}\leq \pi_{(2)}\leq\ldots\leq\pi_{(N_1)}$ be the order statistics of $(\pi(X_i))_{i:D_i=1}$, and let $\tilde\pi_1\ceq \pi_{(1)}-\underline p$, $\tilde\pi_{i}\ceq \pi_{(i)}-\pi_{(i-1)}$ for $i\in\{2,\ldots,N_1\}$ and $\tilde\pi_{N_1+1}\ceq \ba p-\pi_{(N_1)}$ be the corresponding spacings for $\underline p,\ba p$ of \Cref{ass:propscoredist}. Let $\tilde \pi_{(1)}\leq\tilde\pi_{(2)}\leq\ldots\leq \tilde\pi_{(N_1+1)}$ be the order statistics of these spacings. Every propensity score $\pi(X_j)$ of the control units falls either to the left of $\pi_{(1)}$ or to the right of $\pi_{(N_1)}$ or between two propensity scores $\pi_{(i)}$ and $\pi_{(i-1)}$ for some $i\in\set{2,3,\ldots,N_1}$. In all three cases, the closest treated propensity score to $\pi(X_j)$ is within $\tilde\pi_{(N_1+1)}$-distance. Hence, $\max_{i:D_i=0}\minDelta_i\leq \tilde\pi_{(N_1+1)}$. 
 
Given $\Dvector$, the $\pi(X_i)$ restricted to $i:D_i=1$ are i.i.d., with $(\pi(X_i)\mid D_i=1)\sim F_1$. By \Cref{ass:propscoredist}, $\inf_{p\in[\underline p,\ba p]}f_1(p)>0$, thus $F_1$ is strictly increasing on $[\underline p, \ba p]$, and therefore has a strictly increasing inverse $F_1^{-1}$ on $[F_1(\underline p), F_1(\ba p)]$.   Because $F_1^{-1}$ is increasing, $((\pi_{(i)})_{i\in[N_1]}\mid \Dvector) \sim ((F_1^{-1}(U_{(i)}))_{i\in[N_1]}\mid \Dvector)$ by the quantile transform. We distinguish three cases. 
\begin{itemize}
\item Case 1: $\tilde\pi_{(N_1+1)}=\pi_{(i)}-\pi_{(i-1)}$ for some $i\in\{2,\ldots,N_1\}$. Then we bound $\tilde\pi_{(N_1+1)}$ by noting that  $(\pi_{(i)}-\pi_{(i-1)}\mid \Dvector)\sim (F_1^{-1}(U_{(i)})-F_1^{-1}(U_{(i-1)})\mid\Dvector)$, which is bounded by $\supnorm{(F_1^{-1})'}(U_{(i)}-U_{(i-1)})=\supnorm{(F_1^{-1})'}\tilde U_{i}$, because $F_1^{-1}$ is Lipschitz with constant $\supnorm{(F_1^{-1})'}$ with  $(F_1^{-1})'(u)=\fr{1}{f_1(F_1^{-1}(u))}$ finite as $\inf_{p\in[\underline p,\ba p]}f_1(p)>0$ by \Cref{ass:propscoredist}.

\item Case 2: $\tilde\pi_{(N_1+1)}=\pi_{(1)}-\underline p$. Then write $\underline p=F_1^{-1}(F_1(\underline p))=F_1^{-1}(0)$, so $(\tilde\pi_{(N_1+1)}\mid\Dvector)$ is distributed as a random variable that is bounded by $\supnorm{(F_1^{-1})'}(U_{(1)}-0)=\supnorm{(F_1^{-1})'}\tilde U_{1}$.

\item  Case 3: $\tilde\pi_{(N_1+1)}=\ba p -\pi_{(N_1)}$. Then write $\ba p =F_1^{-1}(F_1(\ba p))=F_1^{-1}(1)$, so $(\tilde\pi_{(N_1+1)}\mid\Dvector)$ is distributed as a random variable that is bounded by $\supnorm{(F_1^{-1})'}(1-U_{(N_1)})=\supnorm{(F_1^{-1})'}\tilde U_{N_1+1}$.
\end{itemize}
Conclude that $(\tilde\pi_{(N_1+1)}\mid\Dvector)$ is distributed as a random variable that is bounded by $\supnorm{(F_1^{-1})'}\tilde U_{(N_1+1)}$. Thus,
\begin{align}
\E\max_{i:D_i=0}\minDelta_i & \leq \E \Ebc{\tilde\pi_{(N_1+1)}}{\Dvector} \lesssim \Eb{\fr{1}{N_1+1}\sum_{i=1}^{N_1+1}\fr{1}{N_1+2-i}}\nonumber\\
 &=\sum_{n_1=0}^{n}\left[\fr{1}{n_1+1}\sum_{i=1}^{n_1+1}\fr{1}{n_1+2-i}\right]{n \choose n_1}p_1^{n_1}(1-p_1)^{n-n_1} \nonumber \\
 &= (1-p_1)^{n} + \sum_{n_1=1}^{n}\left[\fr{1}{n_1+1}\sum_{i=1}^{n_1+1}\fr{1}{n_1+2-i}\right]{n \choose n_1}p_1^{n_1}(1-p_1)^{n-n_1}
  \label{eq:exp_delta_bound}
\end{align}
by \cite{shorack_empirical_2009} where $p_1=\prob{D=1}$. The first term in \eqref{eq:exp_delta_bound} decays exponentially. In the second term of \eqref{eq:exp_delta_bound}, the integrand in the square brackets is asymptotic to $\fr{\log n_1}{n_1+1}\leq \fr{\log n}{n_1+1}$. That is, there exist some constants $c,\ba n_1>0$ such that $\fr{1}{n_1+1}\sum_{i=1}^{n_1+1}\fr{1}{n_1+2-i}\leq c\fr{\log n}{n_1+1}$ if $n_1> \ba n_1$. Since the integrand in the square brackets is bounded by one, it follows that the second term in \eqref{eq:exp_delta_bound} is bounded by 
$$\sum_{n_1=1}^{\ba n_1}{n \choose n_1}p_1^{n_1}(1-p_1)^{n-n_1} + c(\log n) \Eb{(1+N_1)^{-1}},$$
 where the first term is $\bigO{n^{{\ba n_1}}(1-p_1)^n}=\bigO{\log n/n}$ and the second term is $\bigO{\log n/n}$ by \cite{cribari-neto_note_2000}. Similar arguments hold for $\E\max_{i:D_i=1}\minDelta_i$ by symmetry.
\end{proof}

\begin{proof}[Proof of \Cref{prop:number_of_matches}]
We have for the $R_{di}$ in \eqref{eq:def_r} of \Cref{lem:convergence_of_ratios},
\begin{align}
\min_{i\in[n]}M_i &= \min_{i\in[n]}N_{1-D_i} F_{1-D_i}[\pi(X_i)\mypm\delta_n](1+R_{1-D_i,i}). \label{eq:minM}
\end{align}
If $\min_{i\in[n]}(1+R_{1-D_i,i})\geq 0$, which happens with probability tending to one, then \eqref{eq:minM} is larger than or equal to $$2\left(\min_{d\in\dummy}\inf_{p\in[\underline p, \ba p]}f_d(p)\right) (N_0\wedge N_1)\delta_n \min_{i\in[n]}(1+R_{1-D_i,i}).$$ 
By \Cref{ass:propscoredist}, $\inf_{p\in[\underline p, \ba p]}f_d(p)>0$. By the strong law of large numbers and the continuous mapping theorem, $(1/\log n)(N_0\wedge N_1)\delta_n=(N_0\wedge N_1)/n\convas (1-p_1)\wedge p_1>0$. By \Cref{lem:convergence_of_ratios}, $\max_{i\in[n]}|R_{1-D_i,i}|=\smallOP{1}$. Then $\min_{i\in[n]}M_i\simeq (1+\smallOP{1})\log n$, from which the lower bound in \Cref{prop:number_of_matches}, and thus $\prob{\min_{i\in[n]}M_i\geq1}\to1$, follows. The same reasoning applies to the upper bound in \Cref{prop:number_of_matches}. 

When the caliper is $\delta_n=\caliperDelta$, $\min_{i\in[n]}M_i\geq1$. \Cref{lem:convergence_of_ratios}\ref{r_convergence_theta}--\ref{rcheck_convergence_theta}, the continuous mapping theorem, combined with the law of large numbers and \Cref{prop:maxmindistorder} prove the assertion.
\end{proof}

\begin{proof}[Proof of \Cref{prop:number_of_matches_estimated}]
Follows along arguments in the proof of \Cref{prop:number_of_matches} and \Cref{lem:convergence_of_ratios} \ref{r_convergence_theta}--\ref{rcheck_convergence_theta}. 
When the caliper is $\delta_n=\caliperDeltahat$, it is bounded by $\maxminDeltahat+\fr{\log N_0}{N_0+1}+\fr{\log N_1}{N_1+1}$, where $\E\fr{\log N_0}{N_0+1}=\bigO{\fr{\log n}{n}}$ by \cite{cribari-neto_note_2000}. A spacings argument on $(\pi(X_i,\hatth))_{i\in[n]}$, similarly to the proof of \Cref{prop:maxmindistorder}, combined with \Cref{ass:propscoredist_estimated,ass:theta_estimator} yields $\maxminDeltahat=\bigOP{\fr{\log n}{n}}$. Then arguments in \Cref{prop:number_of_matches} and \Cref{lem:convergence_of_ratios}\ref{r_convergence_theta}--\ref{rcheck_convergence_theta} prove \Cref{prop:number_of_matches_estimated}.
\end{proof}

\begin{proof}[Proof of \Cref{thm:asymnorm_known_propscore_ate}]
By \Cref{ass:propscoredist}, $\tau(\pi(X))$ of \eqref{eq:ydecomptau} is well-defined, with $\E\tau(\pi(X))=\tau$ by \Cref{ass:ucf}. By \Cref{ass:lip_regression}, the $\mu^d$ are Lipschitz continuous on the compact set $[\underline p,\ba p]$, hence are bounded, and then so is $V_\tau<\infty$. Then the central limit theorem implies $\sqrt{n}(\overline{\tau(\pi(X))}-\tau)\rsquig \mathcal{N}(0,V_\tau)$. Combine this with \Cref{lem:error}, to get 
\begin{align*}
\begin{bmatrix}
V_\tau^{-1/2}\sqrt{n}(\overline{\tau(\pi(X))}-\tau) \\
V_E^{-1/2}\sqrt{n}E
\end{bmatrix}
\rsquig \mathcal{N}(0,\mathrm{I}_2),
\end{align*}
along subsequences, where $\mathrm{I}_2$ is the 2-by-2 identity matrix. By \Cref{lem:efficiency}, $V_E\convprob V_{\sigma,\pi}$, which is finite by \Cref{ass:propscoredist,ass:variance}. Then the continuous mapping theorem and Slutsky's lemma imply $(V_\tau+V_{\sigma,\pi})^{-1/2}\sqrt{n}(\overline{\tau(\pi(X))}-\tau+E)\rsquig\mathcal{N}(0,1)$. The event $\{\min_{i\in[n]} M_i>0\}$ happens with probability tending to one by \Cref{prop:number_of_matches}. On this event, $\sqrt{n}|B|\lesssim \sqrt{n}\delta_n=\fr{\log n}{\sqrt{n}}=\smallO{1}$ by \Cref{ass:lip_regression}. Thus $\sqrt{n}B=\smallOP{1}$.

When the caliper is $\delta_n=\caliperDelta$, \Cref{lem:error} continues to apply. Then the continuous mapping theorem, the law of large numbers and \Cref{prop:maxmindistorder} imply $\sqrt{n}|B|=\smallOP{1}$.
\end{proof}

\begin{proof}[Proof of \Cref{thm:asymnorm_known_propscore_att}]
A decomposition similar to \eqref{eq:thatpi_decomp}--\eqref{eq:ydecompbicdo} holds, whereby
\begin{align*}
\sqrt{n}(\tthatpi-\taut) =&\, \sqrt{n}(\overline{\taut(\pi(X))}-\taut) + \sqrt{n}E_\text{t} + \sqrt{n}B_\text{t} \\
\overline{\taut(\pi(X))} \ceq &\, \fr{1}{N_1}\sumn{i} D_i\tau(\pi(X_i)) \\
 E_\text{t} \ceq &\, \fr{1}{N_1}\sumn{i} E_{\text{t},i},\quad E_{\text{t},i}\ceq (\indic{M_i>0}D_i-(1-D_i)w_i)\varepsilon_i \\
 B_\text{t}  \ceq &\, \fr{1}{N_1}\sumn{i} B_{\text{t},i}, \\
 B_{\text{t},i} \ceq &\, D_i(\indic{M_i>0}-1)(\mu^{D_i}(\pi(X_i))+\mu^{1-D_i}(\pi(X_i))) \\
 	&+ D_i\fr{\indic{M_i>0}}{M_i}\sum_{j\in\calJC(i)} (\mu^0(\pi(X_i))-\mu^0(\pi(X_j))).
\end{align*}
As $\E D(\tau(\pi(X))-\taut)=\Ebc{\tau(\pi(X))-\taut}{D=1}p_1=0$, $\sqrt{n}(\overline{\taut(\pi(X))}-\taut)$ is mean zero with finite variance by \cref{ass:ucf,ass:lip_regression}. By the central limit theorem, continuous mapping and Slutsky's lemma, $\sqrt{n}(\overline{\taut(\pi(X))}-\taut)=(N_1/n)^{-1}n^{-1/2}\sum_{i\in[n]}D_i(\tau(\pi(X_i))-\taut)\rsquig \mathcal{N}(0, V_{\taut})$ since $(N_1/n)\convas p_1$. The $\sqrt{n}E_\mathrm{t}$ has mean zero and a Lindeberg-Feller central limit theorem establishes that $\sup_{x\in\real}\abs{\probc{V_{E_\mathrm{t}}^{-1/2}\sqrt{n}E_\mathrm{t}\leq x}{\Dvector,\Pvector}-\Phi(x)}\convprob 0$, where $V_{E_\mathrm{t}}\ceq \nsumn{i} (\indic{M_i>0}D_i-(1-D_i)w_i)^2\sigma_{D_i}^2(\pi(X_i))$,  similarly to arguments in \Cref{lem:error}. By arguments similar to those of \Cref{lem:efficiency}, $V_{E_{\mathrm{t}}}\convprob V_{\mathrm{t},\sigma,\pi}$. By \Cref{prop:number_of_matches} and arguments in \Cref{thm:asymnorm_known_propscore_ate}, $\sqrt{n}B_\mathrm{t}=\smallOP{1}$.
\end{proof}

\begin{proof}[Proof of \Cref{thm:asymnorm_estimated_propscore_ate}]
Let $w_i(\hatth)\ceq \sum_{j\in\calJC_{\hatth}(i)}\fr{1}{M_j(\hatth)}, i\in[n]$, and, as in \eqref{eq:thatpi_decomp}--\eqref{eq:ydecompbicdo}, decompose
\begin{align}
\sqrt{n}(\thatpihat-\tau) =\, \fr{1}{\sqrt n}\sum_{i\in[n]}\left\{\mu^1(\hatth, \pi(X_i,\hatth))-\mu^0(\hatth, \pi(X_i,\hatth)) - \tau \right\} \label{eq:tauhat_thetahat_decomp1} \\
 +\fr{1}{\sqrt n}\sum_{i\in[n]}(2D_i-1)(\indic{M_i(\hatth)>0}+w_i(\hatth))\varepsilon_i(\hatth)\label{eq:tauhat_thetahat_decomp2} \\
 +\fr{1}{\sqrt n}\sum_{i\in[n]}(2D_i-1)(\indic{M_i(\hatth)>0}-1)(\mu^{1-D_i}(\hatth, \pi(X_i,\hatth))-\mu^{D_i}(\hatth,\pi(X_i,\hatth))) \label{eq:tauhat_thetahat_decomp3} \\
 +\fr{1}{\sqrt n}\sum_{i\in[n]}(2D_i-1)\fr{\indic{M_i(\hatth)>0}}{M_i(\hatth)}\sum_{j\in\calJC_{\hatth}(i)}\left[\mu^{1-D_i}(\hatth, \pi(X_i,\hatth))-\mu^{1-D_i}(\hatth, \pi(X_j,\hatth))\right]. \label{eq:tauhat_thetahat_decomp4}
\end{align}
We show first that \eqref{eq:tauhat_thetahat_decomp1} and \eqref{eq:tauhat_thetahat_decomp2} are, asymptotically, jointly normal and independent, and then that \eqref{eq:tauhat_thetahat_decomp3} and \eqref{eq:tauhat_thetahat_decomp4} are asymptotically negligible.

\emph{Terms \eqref{eq:tauhat_thetahat_decomp1} and \eqref{eq:tauhat_thetahat_decomp2}}. We apply the following result with \eqref{eq:tauhat_thetahat_decomp1} and \eqref{eq:tauhat_thetahat_decomp2} corresponding to $V_n$ and $W_n$ respectively.  Let $V_{n}$, $W_{n}$, $n=1,2,\ldots$ be two sequences of random variables defined on some probability space. To show that $(V_n,W_n)\rsquig(V,W)\sim\mathcal{N}(0, \Sigma)$, for a diagonal matrix $\Sigma=\textup{diag}(\sigma_V^2,\sigma_W^2)$, it suffices that $\E h_1(V_n)h_2(W_n)\to (\E h_1(V))(\E h_2(W))$ for all bounded continuous functions $h_1$, $h_2:\real\to\real$. Let $\filt_{n0}$ be a sub-$\sigma$-algebra such that $V_{n}$ is $\filt_{n0}$-measurable for all $n\geq 1$. As $\E h_1(V_n)h_2(W_n)=\Eb{h_1(V_n)\Ebc{h_2(W_n)}{\filt_{n0}}}$, if suffices, by the Portmanteau lemma, that $V_n\rsquig \mathcal{N}(0,\sigma_V^2)$ and $\probc{W_n\leq w}{\filt_{n0}}\convprob \Phi(w/\sigma_W)$ for all $w\in\real$. 

\emph{Convergence of \eqref{eq:tauhat_thetahat_decomp1}.} Expand \eqref{eq:tauhat_thetahat_decomp1} as
\begin{align}
\fr{1}{\sqrt n}\sum_{i\in[n]}(\mu_{\theta_0}^1(\pi(X_i,\theta_0))-\mu_{\theta_0}^0(\pi(X_i,\theta_0))-\tau)   \label{eq:tauhat_thetahat_decomp1_1} \\
 + \fr{1}{\sqrt n}\sum_{i\in[n]}\Big\{\mu^1(\hatth, \pi(X_i,\hatth))-\mu^0(\hatth, \pi(X_i,\hatth)) -\left[\mu_{\theta_0}^1(\pi(X_i,\theta_0))-\mu_{\theta_0}^0(\pi(X_i,\theta_0))\right]\Big\}.\label{eq:tauhat_thetahat_decomp1_2}
 \end{align}
By \Cref{ass:ucf,ass:lip_regression}, \eqref{eq:tauhat_thetahat_decomp1_1} converges weakly to $\mathcal{N}(0,V_\tau)$ by the standard central limit theorem. By \cref{ass:parametric_propscore,ass:differentiability_regression}, $\Lambda^d(\tld\theta,x)$ is well-defined for all $(\tld\theta,x)\in\ntheta\times\Xsupp$. Then by the mean-value theorem, $\mu^d(\hatth, \pi(x,\hatth))=\mu_{\theta_0}^d(\pi(x,\theta_0))+\Lambda^d(\tld\theta^d,x)(\hatth-\theta_0)$, for some $\tld\theta^d$ on the line segment between $\hatth$ and $\theta_0$. Rewrite \eqref{eq:tauhat_thetahat_decomp1_2} as
\begin{align}
\left(\fr{1}{n}\sum_{i\in[n]}\left[\Lambda^1(\tld\theta^1,X_i)-\Lambda^0(\tld\theta^0,X_i) \right]\right) \sqrt{n}(\hatth-\theta_0). \label{eq:tauhat_thetahat_decomp1_2_rewrite}
\end{align}
By \cref{ass:parametric_propscore,ass:differentiability_regression}, $\tld\theta\mapsto \Lambda^d(\tld\theta,x)$ is continuous and uniformly bounded for all $(\tld\theta,x)\in\ntheta\times\Xsupp$, therefore $\nsumn{i}\Lambda^d(\tld\theta^d,X_i)\convprob q_d^\intercal$ for some finite $q_d\in\real^K$. 
Then \eqref{eq:tauhat_thetahat_decomp1_2_rewrite} and Slutsky's lemma imply that \eqref{eq:tauhat_thetahat_decomp1_2} converges weakly to $\mathcal{N}(0,(q_1-q_0)^\intercal V_{\theta_0}(q_1-q_0))$ by \cref{ass:theta_estimator}. But, by \cref{ass:theta_estimator} again, \eqref{eq:tauhat_thetahat_decomp1_1} is independent of $\sqrt{n}(\hatth-\theta_0)$, thus \eqref{eq:tauhat_thetahat_decomp1}, being the sum of \eqref{eq:tauhat_thetahat_decomp1_1} and \eqref{eq:tauhat_thetahat_decomp1_2}, converges weakly to $\mathcal{N}(0,V_\tau+(q_1-q_0)^\intercal V_{\theta_0}(q_1-q_0))$.

\emph{Conditional Convergence of \eqref{eq:tauhat_thetahat_decomp2}.} Let $\filt_{n0}\ceq \sigma\{D_1,\ldots, D_n,\pi(X_1,\hatth),\ldots ,\pi(X_n,\hatth),\hatth\}$, so that \eqref{eq:tauhat_thetahat_decomp1} is $\filt_{n0}$-measurable. We show that given $\filt_{n0}$, \eqref{eq:tauhat_thetahat_decomp2} converges weakly to a normal variate in probability. We construct a martingale array and apply  \Cref{lem:conditional_mclt}. Let $\xi_{ni}\ceq (2D_i-1)(1+ w_i(\hatth))\varepsilon_i(\hatth)/{\sqrt n}$ and 
$$\mathcal{F}_{ni}\ceq \sigma\{D_1,\ldots, D_n,\pi(X_1,\hatth),\ldots ,\pi(X_n,\hatth), \varepsilon_1(\hatth),\ldots,\varepsilon_{i}(\hatth),\hatth\}$$
for $i\in[n]$. Assume temporarily that $\min_{i\in[n]} M_i(\hatth)>0$, so that \eqref{eq:tauhat_thetahat_decomp2} is equal to $\sum_{i=1}^n\xi_{ni}$. One can verify that $\xi_{n1},\xi_{n2},\ldots,\xi_{nn}$ are martingale differences relative to the filtration $\mathcal{F}_{n1}\subset \mathcal{F}_{n2}\subset\ldots\subset \mathcal{F}_{nn}$, using that $\mu^{D_i}({\hatth},\pi(X_i,\hatth))$ is $\mathcal{F}_{n,i-1}$-measurable and \Cref{ass:theta_estimator}\ref{ass:theta_estimator:indep}, which implies that the observations are i.i.d. given $\hatth$.

First, we verify the variance condition \eqref{eq:mclt_var} of Lemma \ref{lem:conditional_mclt}. Consider $\sum_{i=1}^n\Ebc{\xi_{ni}^2}{\mathcal{F}_{n,i-1}}$. We have
 \begin{align*}
\Ebc{\xi_{ni}^2}{\mathcal{F}_{n,i-1}} & = (1+w_i(\hatth))^2\sigma_{D_i}^2(\hatth,\pi(X_i,\hatth))/n, 
\end{align*}
where we used \Cref{ass:theta_estimator}\ref{ass:theta_estimator:indep} again, and that $w_i(\hatth)$ is $\mathcal{F}_{n,i-1}$-measurable. But then $\Ebc{\xi_{ni}^2}{\mathcal{F}_{n,i-1}}$ is $\filt_{n0}$-measurable for all $i\in[n]$ for all $n\geq 1$, thus for condition \eqref{eq:mclt_var} it suffices that $\sum_{i=1}^n\Ebc{\xi_{ni}^2}{\mathcal{F}_{n,i-1}}$ converges in $\bbP$-probability to a finite constant. \Cref{ass:parametric_propscore}\ref{ass:parametric_propscore:bounded_derivative} implies that $\max_{i\in[n]}|\pi(X_i,\hatth)-\pi(X_i,\theta_0)|=\bigOP{\normu{\hatth-\theta_0}}$. Then, under Assumption \ref{ass:lip_conditional_var}, we can write $\sigma_d^2(\hatth,\pi(X_i,\hatth))=\sigma_{d}^2(\theta_0,\pi(X_i,\theta_0))+S_i$, where $\max_{i\in[n]}|S_i|=\bigOP{\normu{\hatth-\theta_0}}$. Write
\begin{align}
\sum_{i=1}^n\Ebc{\xi_{ni}^2}{\mathcal{F}_{n,i-1}} =&\, \fr{1}{n}\sum_{i\in[n]}(1+w_i(\hatth))^2\sigma_{D_i}^2(\theta_0,\pi(X_i,\theta_0)) +\fr{1}{n}\sum_{i\in[n]}(1+ w_i(\hatth))^2S_i. \label{eq:varcondition}
\end{align}
Under \Cref{ass:parametric_propscore,ass:propscoredist_estimated,ass:theta_estimator}, the arguments in the proof of  \Cref{lem:efficiency} continue to apply in view of \Cref{lem:convergence_of_ratios}\ref{r_convergence_theta}--\ref{rcheck_convergence_theta}. Then for any sequence $(Q_i)_{i\in[n]}$ of random variables and any fixed constant $r\in\real$, we have, by \Cref{lem:convergence_of_ratios}\ref{r_convergence_theta}--\ref{rcheck_convergence_theta},
\begin{align}
\sum_{i\in[n]} w_i(\hatth)^rQ_i  = &\, (1+\smallOP{1})\left\{\left(\fr{N_1}{N_0}\right)^r\sum_{i:D_i=0}\left(\fr{f_{1,\hatth}(\pi(X_i,\hatth))}{f_{0,\hatth}(\pi(X_i,\hatth))}\right)^rQ_i  \right. \nonumber \\
& \left. + \left(\fr{N_0}{N_1}\right)^r\sum_{i:D_i=1}\left(\fr{f_{0,\hatth}(\pi(X_i,\hatth))}{f_{1,\hatth}(\pi(X_i,\hatth))}\right)^rQ_i  \right\}. \label{eq:wtilde_expansion}
\end{align}
The second term in \eqref{eq:varcondition} is bounded by
$$\bigOP{\normu{\hatth-\theta_0}}\left(1+\fr{1}{n}\sum_{i\in[n]}w_i(\hatth)^2\right).$$
Assumption \ref{ass:propscoredist_estimated}, bounding the ratios $f_{d,\theta}(p)/f_{1-d,\theta}(p)$ uniformly in $p\in[\underline p_{\theta},\ba p_{\theta}]$ and $\theta\in\ntheta$, combined with \eqref{eq:wtilde_expansion} and  $(N_{d}/N_{1-d})^r\convas\left(\fr{p_d}{1-p_d}\right)^r$, where $p_0\ceq 1-p_1$, bounds $\fr{1}{n}\sum_{i\in[n]}w_i(\hatth)^2$ by a $(1+\smallOP{1})$-term up to a constant factor. Thus, the second term in \eqref{eq:varcondition} is $\bigOP{\normu{\hatth-\theta_0}}=\smallOP{1}$ under \Cref{ass:theta_estimator}. Put $Q_i\ceq\sigma_{D_i}^2(\theta_0,\pi_i)$ and apply a mean-value expansion in $\theta$ around $\theta_0$ to the right side of \eqref{eq:wtilde_expansion}. This is feasible under \Cref{ass:parametric_propscore,ass:propscoredist_estimated}, which also bound the derivative uniformly in $(x,\tld\theta)\in\Xsupp\times\ntheta$. Then \Cref{ass:lip_conditional_var,ass:theta_estimator} imply that the first term in \eqref{eq:varcondition} is $V_{\sigma,\pi}+\smallOP{1}$ under \Cref{ass:parametric_propscore}, as in the proof of \Cref{lem:efficiency}, where $V_{\sigma,\pi}$ is finite by \Cref{ass:propscoredist_estimated,ass:lip_conditional_var}. Conclude that  \eqref{eq:mclt_var} holds.

Second, we verify the Lindeberg-condition \eqref{eq:mclt_lindeberg} of \Cref{lem:conditional_mclt}.
We need to show 
\begin{align*}
\sum_{i=1}^n\Ebc{\xi_{ni}^2\indic{|\xi_{ni}|\geq\eta}}{\filt_{n0}}\convprob 0\quad \text{for each } \eta>0.
\end{align*}
As $\indic{|\xi_{ni}|\geq\eta}$ is bounded by $\xi_{ni}^2/\eta^2$,
 \begin{align*}
\sum_{i=1}^n\Ebc{\xi_{ni}^2\indic{|\xi_{ni}|\geq\eta}}{\filt_{n0}}&\leq \sum_{t=1}^n\fr{\Ebc{\xi_{ni}^4}{\filt_{n0}}}{\eta^2}=\fr{1}{n\eta^2}\sum_{i\in[n]}\fr{(1+\tld w_i(\hatth))^4\Ebc{\varepsilon_i(\hatth)^4}{\filt_{n0}}}{n},
\end{align*}
with $\Ebc{\varepsilon_i(\hatth)^4}{\filt_{n0}}=\sigma_{D_i}^4(\hatth,\pi(X_i,\hatth))$ by \Cref{ass:theta_estimator}\ref{ass:theta_estimator:indep}, which is bounded uniformly in $i\in[n]$ by \Cref{ass:lip_conditional_var}. In view of \eqref{eq:wtilde_expansion},  $\fr{1}{n^2}\sum_{i\in[n]}(1+\tld w_i(\hatth))^4=\smallOP{1}$,  so \eqref{eq:mclt_lindeberg}  is met. 

Conclude that under the temporary assumption $\min_{i\in[n]} M_i(\hatth)>0$, Lemma \ref{lem:conditional_mclt} applies,  so \eqref{eq:tauhat_thetahat_decomp2} converges weakly to $\mathcal{N}(0,V_{\sigma,\pi})$ in probability. To remove this assumption, define the $\filt_{n0}$-measurable set $A_n\ceq\{\min_{i\in[n]} M_i(\hatth)>0\}$. On $A_n$, \eqref{eq:tauhat_thetahat_decomp2} is equal to $\sum_{i\in[n]}\xi_{ni}$. As $\prob{A_n}\to1$ by \Cref{prop:number_of_matches_estimated}, the desired convergence follows.

\emph{Vanishing \eqref{eq:tauhat_thetahat_decomp3}.}  By Assumption \ref{ass:differentiability_regression}, the $p\mapsto \mu^d(\theta,p)$ are continuous on a compact set $[\underline p_{\theta},\ba p_{\theta}]$ for all $\theta\in\ntheta$. By \Cref{ass:theta_estimator},  $\prob{\hatth\in\ntheta}\to 1$. By \Cref{prop:number_of_matches_estimated}, $\prob{A_n}\to1$, thus \eqref{eq:tauhat_thetahat_decomp3} is $\smallOP{1}$.

\emph{Vanishing \eqref{eq:tauhat_thetahat_decomp4}.}  By Assumption \ref{ass:differentiability_regression}, $|\mu^d(\theta,p')-\mu^d(\theta,p)|\leq L|p'-p|$ for all $p',p\in [\underline p_\theta, \ba p_\theta]$ for all $\theta\in\ntheta$. Then
$$\max_{i\in[n]}\max_{j\in\calJC_{\hatth}(i)}|\mu^{1-D_i}(\hatth,\pi(X_i,\hatth))-\mu^{1-D_i}(\hatth,\pi(X_j,\hatth))|\lesssim \max_{i\in[n]}\max_{j\in\calJC_{\hatth}(i)}|\pi(X_i,\hatth)-\pi(X_j,\hatth)|.$$
By the construction of $\calJC_{\hatth}(i)$, the right side is bounded by $\delta_n$, where $\sqrt{n}\delta_n\to 0$.

When the caliper is $\delta_n=\caliperDeltahat$, the above arguments continue to hold. Specifically, in showing the conditional convergence of \eqref{eq:tauhat_thetahat_decomp2}, $w_i(\hatth)$ is still $\mathcal{F}_{n,i-1}$-measurable; \eqref{eq:tauhat_thetahat_decomp3} is exactly zero, and, in \eqref{eq:tauhat_thetahat_decomp4}, $\sqrt{n}\delta_n\leq \sqrt{n}\left(\maxminDeltahat+\fr{\log n}{N_0+1}+\fr{\log n}{N_1+1}\right)=\smallOP{1}$ in view of the proof of \Cref{prop:number_of_matches_estimated}.
\end{proof}

\begin{proof}[Proof of \Cref{thm:asymnorm_estimated_propscore_att}]
Follows those of \Cref{thm:asymnorm_known_propscore_att} and \Cref{thm:asymnorm_estimated_propscore_ate}.
\end{proof}

\begin{proof}[Proof of \Cref{prop:admissible_models_estimated_propscore}]
\Cref{ass:ucf,ass:parametric_propscore,ass:theta_estimator}\ref{ass:theta_estimator:indep} hold by construction as $\Xsupp$ is bounded.  The assumptions of \cref{ex:admissible_models} imply \Cref{ass:theta_estimator}\ref{ass:theta_estimator:asymnorm} by standard asymptotic theory \Citep{van_der_vaart_asymptotic_1998}.  We assume $K=2$ covariates in the following so that $\Xsupp=\Xsupp_1\times\Xsupp_2$; the general case $K\geq 2$ follows analogously. First, we establish some general results. Let $\theta_k$ ($\theta_{0,k}$) denote the $k$th entry of $\theta$ ($\theta_0$). For $t\in\mathcal{T}\ceq\left\{\theta^\intercal x:\theta\in\Theta,x\in\Xsupp\right\}$, $\theta^\intercal X$ has density $f_{\theta^\intercal X}(t)=\int_{\Xsupp_1}\Xdensity\left(x_1,\fr{t-\theta_1x_1}{\theta_2}\right)\deriv x_1$, which is strictly positive by assumptions on $\Xdensity$, the density of $X$.\footnote{If $X$ included an intercept, then we would have $f_{\theta^\intercal X}(t)=\int_{\Xsupp_1}\Xdensity\left(x_1,\fr{t-\theta_3-\theta_1x_1}{\theta_2}\right)\deriv x_1$, where $\theta_3$ is the coefficient on the intercept. Below, the right side of \eqref{eq:exph_givent}, $f_{\theta^\intercal X \mid D}(t\mid D)$ and \eqref{eq:exph_givendt} would need to be adjusted in a similar manner to accommodate an intercept.} Let $h:\mathcal{X}\to \real^J,J\geq 1$, be an arbitrary integrable function. We have
\begin{align}
 \Ebc{h(X_1, X_2)}{\theta^\intercal X=t} = \fr{1}{f_{\theta^\intercal X}(t)} \int_{\Xsupp_1} h\left(x_1, \fr{t-\theta_1x_1}{\theta_2}\right) \Xdensity\left(x_1,\fr{t-\theta_1x_1}{\theta_2}\right)\deriv x_1. \label{eq:exph_givent}
 \end{align}
 Combine this with the tower property of expectations to get the conditional density
 \begin{align*}
f_{\theta^\intercal X\mid D}(t\mid d) = \begin{cases}
 \fr{1}{1-p_1} \int_{\Xsupp_1} (1-g_{\theta_0}(x_1,\theta,t)) \Xdensity\left(x_1,\fr{t-\theta_1x_1}{\theta_2}\right)\deriv x_1 \quad &\text{ if } d=0, \\
  \fr{1}{p_1} \int_{\Xsupp_1} g_{\theta_0}(x_1,\theta,t)\Xdensity\left(x_1,\fr{t-\theta_1x_1}{\theta_2}\right)\deriv x_1 \quad & \text{ if } d=1,
\end{cases}
\end{align*}
where $g_{\theta_0}(x_1,\theta,t)\ceq g\left(\theta_{0,1}x_1+\theta_{0,2}\fr{t-\theta_1x_1}{\theta_2}\right)\in(0,1)$ as $0<g(t')<1$ for all $t'$ in the bounded $\mathcal{T}$. Hence, $f_{\theta^\intercal X\mid D}$ is strictly positive. It is also continuously differentiable in $t$ by assumptions on $g$ and $\Xdensity$. Moreover, $\theta\mapsto f_{\theta^\intercal X\mid D}(t\mid d)$ is continuously differentiable at $\tld
\theta\in\ntheta$ with bounded derivative for a $\theta_{0,2}\neq 0$ as $\sup_{t\in\real}g'(t)<\infty$ and the derivatives of $\Xdensity$ are bounded.  One can also show that $\Ebc{h(X_1, X_2)}{D=d,\theta^\intercal X=t}$ is
\begin{align}
&\fr{1-p_1}{f_{\theta^\intercal X\mid D}(t\mid0)}\int_{\Xsupp_1} h\left(x_1,\fr{t-\theta_1x_1}{\theta_2}\right)\left(1-g_{\theta_0}(x_1,\theta,t)\right)\Xdensity\left(x_1,\fr{t-\theta_1x_1}{\theta_2}\right)\deriv x_1\quad&\text{ if } d=0, \nonumber \\
&\fr{p_1}{f_{\theta^\intercal X\mid D}(t\mid1)}\int_{\Xsupp_1} h\left(x_1,\fr{t-\theta_1x_1}{\theta_2}\right)g_{\theta_0}(x_1,\theta,t)\Xdensity\left(x_1,\fr{t-\theta_1x_1}{\theta_2}\right)\deriv x_1 \quad&\text{ if } d=1,\label{eq:exph_givendt}
\end{align}
which is continuously differentiable in $t$ and $\theta$ by assumptions on $g,\Xdensity$, and the properties of $f_{\theta^\intercal X\mid D}$ derived above, provided $h$ is continuously differentiable. We are now ready to verify the remaining assumptions.

\emph{\Cref{ass:propscoredist_estimated}.} The distributions are
\begin{align*}
F_{d,\theta}(p)= \begin{cases}
 \fr{1}{1-p_1} \int_\Xsupp \indic{g(\theta^\intercal x)\leq p} (1-g(\theta_0^\intercal x)) \Xdensity(x)\deriv x \quad &\text{ if } d=0, \\
  \fr{1}{p_1} \int_\Xsupp \indic{g(\theta^\intercal x)\leq p} g(\theta_0^\intercal x)\Xdensity(x)\deriv x \quad & \text{ if } d=1.
\end{cases}
\end{align*}
Since $g$ is increasing and $\mathcal{T}_\theta\ceq\left\{\theta^\intercal x: x\in\Xsupp\right\}$ is compact, for all $\theta\in\Theta$ there exist $0<\underline p_\theta<\ba p_\theta<1$ such that $F_{d,\theta}(\underline p_\theta)=0$ and $F_{d,\theta}(\ba p_\theta)=1$. Specifically, $\underline p_\theta=g(\inf\mathcal{T}_\theta)$ and $\ba p _\theta=g(\sup \mathcal{T}_\theta)$. On $[\underline p_\theta,\ba p_\theta]$, the $F_{d,\theta}$ admit densities
$$f_{d,\theta}(p)=f_{\theta^\intercal X\mid D}(\ginv(p)\mid d) (\ginv)'(p)=\fr{f_{\theta^\intercal X\mid D}(\ginv(p)\mid d)}{g'(\ginv(p))},$$
where $\ginv$ is well-defined, as well as its derivative by the inverse function theorem. Then the assumptions on $g$ and the properties of $f_{\theta^\intercal X\mid D}$ imply that \Cref{ass:propscoredist_estimated} holds.

\emph{\Cref{ass:differentiability_regression}.} Under \Cref{ass:ucf}, the tower property of expectation gives
$$\mu^d(\theta,p)=\Ebc{Y}{D=d,\theta^\intercal X=\ginv(p)}=\Ebc{m_d(X)}{D=d,\theta^\intercal X=\ginv(p)}.$$
But then the properties of \eqref{eq:exph_givendt}, $g$ and $m_d$ imply that \Cref{ass:differentiability_regression} holds.

\emph{\Cref{ass:lip_conditional_var}.} The lower bound in \ref{ass:lip_conditional_var:lip} is satisfied by assumptions on $\nu_d$ in \Cref{ex:admissible_models}. The Lipschitz condition in \ref{ass:lip_conditional_var:lip} and \ref{ass:lip_conditional_var:bound} both follow by the mean-value theorem as the $\sigma_d^2$ and $\sigma_d^4$ are polynomials in terms of the form $\Ebc{h(X_1, X_2)}{D=d,\theta^\intercal X=\ginv(p)}$ for continuously differentiable functions $h$ by assumptions on $m_d$ and $\nu_d$, so \eqref{eq:exph_givendt} applies.
\end{proof}

\begin{proof}[Proof of \Cref{prop:admissible_models_known_propscore} ]
Follows that of \Cref{prop:admissible_models_estimated_propscore}.
\end{proof}

\begin{lemma}[Convergence of Ratios]
\label{lem:convergence_of_ratios}
Suppose that the caliper $\delta_n$ satisfies \eqref{eq:caliper}. For the measures in \eqref{eq:def_measures}, define
\begin{align}
R_{di}&\ceq \fr{\mathbbm{F}_{N_d}[\pi(X_i)\mypm\delta_n]}{F_{d}[\pi(X_i)\mypm\delta_n]}-1,   \label{eq:def_r} \\
\tld R_{di}&\ceq \fr{F_d[\pi(X_i)\mypm\delta_n]}{2\delta_n f_d(\pi(X_i))}-1, \label{eq:def_rtld} \\
\check R_{dji} &\ceq \fr{f_d(\pi(X_j))}{f_d(\pi(X_i))}-1 \label{eq:def_rcheck}
\end{align}
for $j\in\calJC(i)$, $i\in\set{i\in[n]:D_i=1-d}$ and $d\in\dummy$. If \Cref{ass:propscoredist} holds, then
\begin{enumerate}[label=(\roman*)]
\item \label{r_convergence} $\max_{i:D_i=1-d}|R_{di}|=\smallOP{1}$; and
\item \label{rtld_convergence} $\max_{i:D_i=1-d}|\tld R_{di}|=\smallOP{1}$; and
\item \label{rcheck_convergence} $\max_{i:D_i=1-d}\max_{j\in\calJC(i)}|\check R_{dji}|=\smallOP{1}$
\end{enumerate}
as $n\to\infty$ for all $d\in\dummy$. Suppose that \cref{ass:parametric_propscore} holds and the caliper $\delta_n$ satisfies \eqref{eq:caliper_hat}. For the measures in \eqref{eq:def_measures_theta}, define 
\begin{align}
R_{di}^{\theta}&\ceq \fr{\mathbbm{F}_{N_d,\theta}[\pi(X_i,\theta)\mypm\delta_n]}{F_{d,\theta}[\pi(X_i,\theta)\mypm\delta_n]}-1,   \label{eq:def_r_theta} \\
\tld R_{di}^{\theta}&\ceq \fr{F_{d,\theta}[\pi(X_i,\theta)\mypm\delta_n]}{2\delta_n f_{d,\theta}(\pi(X_i,\theta))}-1, \label{eq:def_rtld_theta} \\
\check R_{dji}^{\theta} &\ceq \fr{f_{d,\theta}(\pi(X_j,\theta))}{f_{d,\theta}(\pi(X_i,\theta))}-1 \label{eq:def_rcheck_theta}
\end{align}
for $j\in\calJC_{\theta}(i)$, $i\in\set{i\in[n]:D_i=1-d}$, $d\in\dummy$ and $\theta\in\ntheta$. If \Cref{ass:propscoredist_estimated} holds, then for any $\hatth$ satisfying \Cref{ass:theta_estimator},
\begin{enumerate}[label=(\roman*)]
\setcounter{enumi}{3}
\item \label{r_convergence_theta} $\max_{i:D_i=1-d}|R_{di}^{\hatth}|=\smallOP{1}$; and
\item \label{rtld_convergence_theta} $\max_{i:D_i=1-d}|\tld R_{di}^{\hatth}|=\smallOP{1}$; and
\item \label{rcheck_convergence_theta} $\max_{i:D_i=1-d}\max_{j\in\calJC_{\hatth}(i)}|\check R_{dji}^{\hatth}|=\smallOP{1}$
\end{enumerate}
as $n\to\infty$ for all $d\in\dummy$.
\end{lemma}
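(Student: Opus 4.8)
The plan is to prove (i)--(iii) for the known propensity score and then deduce (iv)--(vi) by conditioning on $\hatth$ and re-running those arguments uniformly over $\theta\in\ntheta$. Items (ii) and (iii) are essentially deterministic and rest only on the smoothness in \Cref{ass:propscoredist}. For $j\in\calJC(i)$ one has $|\pi(X_j)-\pi(X_i)|\le\delta_n\to0$, so $\check R_{dji}\to0$ uniformly by uniform continuity of $f_d$ on the compact $[\underline p,\ba p]$ together with $\inf_p f_d(p)>0$ (which also keeps the ratio well defined); this is (iii). For (ii), $F_d[\pi(X_i)\mypm\delta_n]$ is the integral of $f_d$ over an interval of length $\le 2\delta_n$ about $\pi(X_i)$, which by uniform continuity equals $2\delta_n f_d(\pi(X_i))(1+o(1))$ uniformly over centres at distance $\gtrsim\delta_n$ inside the support; dividing by $2\delta_n f_d(\pi(X_i))$, which is bounded away from $0$, gives $\tld R_{di}=o(1)$ there, while for the $O(\log n)$ centres within $\delta_n$ of an endpoint the ratio $1+\tld R_{di}$ stays bounded between $\tfrac{1}{2}(1+o(1))$ and $\supnorm{f_d}/\inf_p f_d(p)$, which is what is needed in the sequel.

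For (i), condition on $\Dvector$: the $(\pi(X_i))_{i:D_i=d}$ are i.i.d.\ with law $F_d$, and the centres $\pi(X_i)$ for $i$ with $D_i=1-d$ are independent of that subsample, so with $\gproc_{N_d}=\sqrt{N_d}(\mathbbm{F}_{N_d}-F_d)$ one has $R_{di}=\gproc_{N_d}[\pi(X_i)\mypm\delta_n]/(\sqrt{N_d}\,F_d[\pi(X_i)\mypm\delta_n])$. By \Cref{ass:propscoredist}, $F_d[\pi(X_i)\mypm\delta_n]\ge\delta_n\inf_p f_d(p)\gtrsim\delta_n$, so it suffices to control $\sup_{|I|\le 2\delta_n}|\gproc_{N_d}(I)|$. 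Intervals form a VC class of bounded index, and those of length $\le 2\delta_n$ have $L_2(F_d)$-radius $\lesssim\sqrt{\delta_n}$; a ratio/oscillation maximal inequality for empirical processes over VC classes (in the spirit of \cite{alexander_rates_1987} and \Citet[]{van_der_vaart_weak_1996}), together with $N_d\delta_n\asymp\log n\to\infty$ and $N_d/n\convas p_d>0$ from the law of large numbers, then yields $\sup_{|I|\le 2\delta_n}|\gproc_{N_d}(I)|/(\sqrt{N_d}\,\delta_n)=\smallOP{1}$, hence $\max_{i:D_i=1-d}|R_{di}|=\smallOP{1}$; integrating out $\Dvector$ removes the remaining randomness in $N_d$ and in the centres. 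When $\delta_n=\caliperDelta$, which is of order $\log n/n$ in probability by \Cref{prop:maxmindistorder}, the same bound applies on the event $\{\delta_n\le C\log n/n\}$, which has probability tending to one.

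Items (iv)--(vi) follow by a conditioning argument exploiting \Cref{ass:theta_estimator}\ref{ass:theta_estimator:indep}: given $\hatth=\theta$, the sample $((Y_i,X_i,D_i))_{i\in[n]}$ still has law $\bbP_{\theta_0}$, so $(\pi(X_i,\theta))_{i:D_i=d}$ are i.i.d.\ with law $F_{d,\theta}$. Under \Cref{ass:propscoredist_estimated} the densities $f_{d,\theta}$ are bounded away from zero and continuously differentiable \emph{uniformly} over $\theta\in\ntheta$ (by compactness of $\mathcal{S}_{\theta_0,\epsilon}$), so the ingredients of the (i)--(iii) arguments --- the lower bound $F_{d,\theta}(I)\gtrsim\delta_n$, the modulus of continuity of $f_{d,\theta}$, and the VC index of the interval class --- hold with constants not depending on $\theta\in\ntheta$. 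Running those proofs conditionally therefore gives $\sup_{\theta\in\ntheta}\max_{i:D_i=1-d}|R_{di}^{\theta}|=\smallOP{1}$ and likewise for $\tld R^\theta$ and $\check R^\theta$; since $\prob{\hatth\in\ntheta}\to1$ by \Cref{ass:theta_estimator}\ref{ass:theta_estimator:asymnorm}, evaluating at $\theta=\hatth$ and integrating out $\hatth$ yields (iv)--(vi). For the data-dependent caliper $\delta_n=\caliperDeltahat$ one additionally invokes $\maxminDeltahat=\bigOP{\log n/n}$, which follows from a spacings argument on $(\pi(X_i,\hatth))_{i\in[n]}$ exactly as in the proof of \Cref{prop:maxmindistorder}, using \Cref{ass:propscoredist_estimated,ass:theta_estimator}.

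The main obstacle is the empirical-process control in (i) and (iv): obtaining a maximal inequality sharp enough to turn $\sup_{|I|\le 2\delta_n}|\gproc_{N_d}(I)|$ into a genuine $\smallOP{1}$ relative error at the borderline bandwidth $\delta_n\asymp\log n/n$, where $N_d F_d(I)\asymp\log n$ sits exactly at the edge of the regime in which ratio limit theorems deliver $\smallOP{1}$ rather than merely $\bigOP{1}$; this is where the data-dependent caliper and a careful choice of the VC maximal inequality (and of the constant $s$) are what make the argument go through. Everything else reduces to the smoothness hypotheses and the sample-splitting independence of $\hatth$ from the data used for matching.
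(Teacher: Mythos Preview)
Your approach is the same as the paper's: (ii)--(iii) from uniform continuity of $f_d$ on the compact support and $\inf f_d>0$; (i) from Alexander's empirical-process ratio bound on the VC class of intervals, conditional on $\Dvector$; (iv)--(vi) by conditioning on $\hatth$ and exploiting that the relevant constants are distribution-free, together with $\prob{\hatth\in\ntheta}\to1$.

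Two remarks are worth making. First, in (i) your phrasing ``it suffices to control $\sup_{|I|\le2\delta_n}|\gproc_{N_d}(I)|$'' after lower-bounding $F_d(I)\gtrsim\delta_n$ would, taken literally, only give $\bigOP{1}$: at $\delta_n\asymp\log n/n$ the oscillation bound $\sup_{|I|\le2\delta_n}|\gproc_{N_d}(I)|\asymp\sqrt{\delta_n\log(1/\delta_n)}$ and the denominator $\sqrt{N_d}\,\delta_n$ are of the same order, as you yourself note in the final paragraph. The paper does not separate numerator and denominator; it applies Alexander's bounds directly to the ratio $W_d\ceq\sup_p|\gproc_{N_d}[p\mypm\delta_n]|/(\sqrt{N_d}F_d[p\mypm\delta_n])$, splitting on $F_d\le\tfrac12$ (Alexander's (7.66)--(7.67)) versus $F_d>\tfrac12$ (a VC tail bound from \Citet{van_der_vaart_weak_1996}), and obtains an \emph{explicit} exponential tail bound for $\probc{W_d>\zeta}{\Dvector}$. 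That quantitative bound is not cosmetic: it is integrated in \Cref{lem:lindeberg_feller_bound} to prove $\E(1+R_{di})^{4r}\indic{N_d\ge1}=\bigO{1}$, which the Lindeberg verification needs and which a bare $\smallOP{1}$ would not furnish.

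Second, your boundary caveat in (ii) is apt: when $\pi(X_i)$ is within $\delta_n$ of an endpoint of $[\underline p,\ba p]$, the ratio $1+\tld R_{di}$ can be as small as $\tfrac12+o(1)$, so $\max_i|\tld R_{di}|=\smallOP{1}$ fails at such points. The paper's ``uniform differentiability of $F_d$'' argument glosses over this endpoint failure. Your observation that boundedness of $1+\tld R_{di}$ away from zero is what is actually used in the averaging of \Cref{lem:efficiency} (the $O_P(\log n)$ boundary points contribute negligibly to means over $n$ terms) is the correct patch.
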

\begin{proof}
Assertion \ref{r_convergence}. Consider
\begin{align}
|R_{1i}|\leq \sup_{p\in[\underline p, \ba p]}\bigg\vert\fr{\mathbbm{F}_{N_1}[p\mypm\delta_n]}{F_1[p\mypm\delta_n]}-1\bigg\vert =\sup_{p\in[\underline p, \ba p]}\fr{|\gproc_{N_1}[p\mypm\delta_n]|}{\sqrt{N_1}F_1[p\mypm\delta_n]}\eqc W_1.\label{eq:rbound}
\end{align}
Fix a constant $\zeta>0$. We bound $\prob{W_1>\zeta \mid \Dvector}$ using ratio and tail bounds of empirical processes. To this end, note that for any finite $\delta_n>0$, $\mathcal{C}_{\delta_n}\ceq \{[p\mypm\delta_n]:p\in[\underline p, \ba p]\}$ is a VC-class, with VC-dimension equal to two (e.g. \Citet[Example 2.6.1]{van_der_vaart_weak_1996}). Let $\gamma_{n}\ceq 2\delta_n \inf_{p\in[\underline p,\ba p]}f_1(p)$, so that $\inf_{p\in[\underline p,\ba p]}F_1[p\mypm\delta_n]\geq \gamma_{n}$. The event $\{W_1> \zeta\}$ is equal to 
\begin{align}
 \left\{\sup\left\{\fr{|\gproc_{N_1}[p\mypm\delta_n]|}{F_1[p\mypm\delta_n]}: p\in[\underline p, \ba p], F_1[p\mypm\delta_n]\geq \gamma_{n}\right\}>\sqrt{N_1}\zeta\right\} \subset \nonumber \\
\left( \left\{\sup\left\{\fr{|\gproc_{N_1}[p\mypm\delta_n]|}{F_1[p\mypm\delta_n]}: p\in[\underline p, \ba p], F_1[p\mypm\delta_n]\geq \gamma_{n}, F_1[p\mypm\delta_n]\leq \fr{1}{2} \right\}>\sqrt{N_1}\zeta\right\} \right. \cup \label{eq:unionset1} \\
  \left. \left\{\sup\left\{\fr{|\gproc_{N_1}[p\mypm\delta_n]|}{F_1[p\mypm\delta_n]}: p\in[\underline p, \ba p], F_1[p\mypm\delta_n]\geq \gamma_{n}, F_1[p\mypm\delta_n]> \fr{1}{2} \right\}>\sqrt{N_1}\zeta\right\} \right).\label{eq:unionset2}
\end{align}
First, we bound the probability of the event in \eqref{eq:unionset1}. Take $A\ceq\{p\in[\underline p, \ba p]: F_1[p\mypm\delta_n]\geq\gamma_{n}\}$, $B\ceq\{p\in[\underline p, \ba p]: F_1[p\mypm\delta_n]\leq \fr{1}{2}\}$.  If $\gamma_{n}>\fr{1}{2}$, $A\cap B$ is empty and by the convention $\sup\emptyset=-\infty$, the set \eqref{eq:unionset1} has measure zero. So assume without loss of generality that $\gamma_{n}\leq F_1[p\mypm\delta_n]\leq \fr{1}{2}$. Thus, $\fr{\gamma_{n}}{2}\leq \sigma_1^2[p\mypm\delta_n]\ceq (F_1[p\mypm\delta_n])(1-F_1[p\mypm\delta_n])\leq \fr{1}{4}$, where note that $\sigma_1^2[p\mypm\delta_n]< F_1[p\mypm\delta_n]$ for $F_1[p\mypm\delta_n]\geq\gamma_{n}>0$. As $N_1\gamma_n\convas\infty$ and $N_1^{-1}\log(e\vee\log(e\vee N_1))=\smallO{\gamma_n}$ a.s., conditions (2.2)--(2.3)  in \citet[Theorem 2.1]{alexander_rates_1987} hold a.s.. Therefore, the bounds in the proof of Theorem 5.1 therein apply. Hence, as in (7.66)--(7.67) of \cite{alexander_rates_1987},
\begin{align*}
\probc{\sup\left\{\fr{|\gproc_{N_1}[p\mypm\delta_n]|}{F_1[p\mypm\delta_n]}: p\in[\underline p, \ba p], F_1[p\mypm\delta_n]\geq \gamma_{n}, F_1[p\mypm\delta_n]\leq \fr{1}{2} \right\}>\sqrt{N_1}\zeta}{\Dvector} \\
\leq \probc{|\gproc_{N_1}[p\mypm\delta_n]|>(\sigma_1^2[p\mypm\delta_n])\sqrt{N_1}\zeta\text{ for some } p\in[\underline p, \ba p]: \fr{\gamma_{n}}{2}\leq \sigma_1^2[p\mypm\delta_n]\leq \fr{1}{4}}{\Dvector} \\
\leq 36\int_{\gamma_{n}/2}^{1/4}t^{-1}e^{-\zeta^2N_1t/512} \deriv t+68e^{-\zeta N_1\gamma_{n}/256} \\
\leq \fr{36}{\zeta^2N_1\gamma_{n}} e^{-\zeta^2N_1\gamma_n/1024} +68e^{-\zeta N_1\gamma_{n}/256}.
\end{align*}
Second, the probability of the event in \eqref{eq:unionset2} is bounded by
\begin{align}
\probc{\sup \left\{\fr{|\gproc_{N_1}[p\mypm\delta_n]|}{F_1[p\mypm\delta_n]}:p\in[\underline p, \ba p], F_1[p\mypm\delta_n]>\fr{1}{2}\right\}>\sqrt{N_1}x}{\Dvector} \nonumber \\
\leq \probc{\sup \left\{|\gproc_{N_1}[p\mypm\delta_n]|:p\in[\underline p, \ba p], F_1[p\mypm\delta_n]>\fr{1}{2}\right\}>\fr{\sqrt{N_1}}{2}x}{\Dvector}\nonumber  \\
\leq \probc{\sup_{p\in[\underline p, \ba p]}|\gproc_{N_1}[p\mypm\delta_n]|>\fr{\sqrt{N_1}}{2}x}{\Dvector} \label{eq:lindeberg_tailprob}
\end{align}
as the supremum over a larger set cannot decrease. \Citet[Theorem 2.14.9]{van_der_vaart_weak_1996}  bound \eqref{eq:lindeberg_tailprob} by $ cN_1\zeta^2e^{-\fr{N_1\zeta^2}{2}}$. Therefore,
\begin{align}
\probc{W_1>\zeta}{\Dvector}\leq \fr{c_1}{\zeta^2N_1\gamma_{n}}e^{-\zeta^2N_1\gamma_{n}/1024}+c_2 e^{-\zeta N_1\gamma_{n}/256}+c_3 N_1\zeta^2 e^{-N_1 \zeta^2/2}, \label{eq:wbound}
\end{align}
on the set where $N_0,N_1\geq 1$, which happens with probability tending to one. The left side is bounded by one, the right side converges to zero in probability; then the left side also converges to zero in expectation. Similar arguments hold for $W_0\ceq \sup_{p\in[\underline p, \ba p]}\fr{|\gproc_{N_0}|[p\mypm \delta_n]|}{\sqrt{N_0} F_0[p\mypm\delta_n]}$, bounding $\max_{i:D_i=1}|R_{0i}|$. Conclude that $\max_{i:D_i=1-d}|R_{di}|=\smallOP{1}$. When the caliper is $\delta_n=\caliperDelta$, the same arguments yield the assertion by letting $\gamma_n\ceq \gamma_{N_d}\ceq 2\inf_{p\in[\underline p,\ba p]}f_d(p)\fr{\log N_d}{N_d+1} $ when bounding $W_d$, $d\in\dummy$.

Assertion \ref{rtld_convergence}. By \Cref{ass:propscoredist}, the $f_d$ are continuous on the compact set $[\underline p,\ba p]$, hence are uniformly continuous. By the mean-value theorem, uniform continuity of $f_d$ implies uniform differentiability of $F_d$. By uniform differentiability of $F_0$, $\sup_{j:D_j=1}|F_0[\pi(X_j)\mypm\delta_n]-2\delta_n f_0(\pi(X_j))|=\smallOP{\delta_n}$ for $\delta_n=\smallOP{1}$. To see this, fix a constant $\zeta>0$. By uniform differentiability of $F_0$, for all $\zeta$ there exists a constant $\ba \delta>0$ such that 
$$\sup_{p\in[\underline p, \ba p]}\fr{|F_0(p+\delta_n)-F_0(p)-\delta_n f_0(p)|}{\delta_n}\leq \zeta$$
whenever $\delta_n\leq \ba\delta$. The event $\{\sup_{p\in[\underline p, \ba p]}\fr{|F_0(p+\delta_n)-F_0(p)-\delta_n f_0(p)|}{\delta_n}> \zeta\}$ is equal to
\begin{align*} 
\set{\sup_{p\in[\underline p, \ba p]}\fr{|F_0(p+\delta_n)-F_0(p)-\delta_n f_0(p)|}{\delta_n}> \zeta,\delta_n\leq\ba\delta} \\\cup\set{\sup_{p\in[\underline p, \ba p]}\fr{|F_0(p+\delta_n)-F_0(p)-\delta_n f_0(p)|}{\delta_n}> \zeta, \delta_n>\ba\delta}.
\end{align*}
The first event has measure zero by uniform differentiability. The probability of the second event is dominated by $\prob{\delta_n>\ba\delta}$, which is $\smallO{1}$. Then the statement follows by noting that $F_0[p\mypm\delta_n]=F_0(p+\delta_n)-F_0(p)+F_0(p)-F_0(p-\delta_n)$ and that $\max_{i\in[n]}\max_{j\in\calJC(i)}|f_0(\pi(X_i))-f_0(\pi(X_j))|=\smallOP{1}$ (see proof of Assertion \ref{rcheck_convergence}), so $2f_0(p)=f_0(p)+f_0(p-\delta_n)+f_0(p)-f_0(p-\delta_n)=f_0(p)+f_0(p-\delta_n)+\smallOP{1}$. As $\inf_{p\in[\underline p, \ba p]} f_0(p)>0$ by \Cref{ass:propscoredist}, Assertion \ref{rtld_convergence} follows. When the caliper is $\delta_n=\caliperDelta$, \Cref{prop:maxmindistorder}, the continuous mapping theorem and the law of large numbers imply $\delta_n=\smallOP{1}$, whence the assertion follows by the above arguments.

Assertion \ref{rcheck_convergence}. As $f_0$ is uniformly continuous by \Cref{ass:propscoredist}, 
$$\max_{i\in[n]}\max_{j\in\calJC(i)}|f_0(\pi(X_j))-f_0(\pi(X_i))|=\smallOP{1}.$$ 
To see this, fix a constant $\zeta>0$. By uniform continuity of $f_0$, for all $\zeta$ there exists an $\eta>0$ such that $|f_0(p)-f_0(p')|\leq\zeta$ whenever $|p-p'|\leq\eta$ for all $p,p'\in[\underline p, \ba p]$. The event $\{|f_0(\pi(X_i))-f_0(\pi(X_j))|>\zeta\}$ is equal to $\{|f_0(\pi(X_i))-f_0(\pi(X_j))|>\zeta,|\pi(X_i)-\pi(X_j)|\leq\eta\}\cup\{|f_0(\pi(X_i))-f_0(\pi(X_j))|>\zeta, |\pi(X_i)-\pi(X_j)|>\eta\}$. The first event has measure zero by uniform continuity. As $j\in\calJC(i)$, the probability of the second event is dominated by $\prob{|\pi(X_i)-\pi(X_j)|>\eta}\leq\prob{\delta_n>\eta}$, which is $\smallO{1}$. Hence $\max_{i:D_i=0}\max_{j\in\calJC(i)}|\check R_{0ji}|=\smallOP{1}$, because $\inf_{p\in[\underline p, \ba p]} f_0(p)>0$ by \Cref{ass:propscoredist}. When the caliper is $\delta_n=\caliperDelta$, arguments proving Assertion \ref{rtld_convergence} apply.

Assertions \ref{r_convergence_theta}, \ref{rtld_convergence_theta}, \ref{rcheck_convergence_theta} follow along the same arguments by conditioning on $\hatth$, exploiting \Cref{ass:propscoredist_estimated,ass:theta_estimator} and that the constants of the bounds of \cite{alexander_rates_1987} and \Citet[]{van_der_vaart_weak_1996} do not depend on the underlying distribution. This holds for any caliper choice in \eqref{eq:caliper_hat} as $\maxminDeltahat=\smallOP{1}$ by the proof of \Cref{prop:number_of_matches_estimated}.
\end{proof}

\begin{lemma}[Error Term]
\label{lem:error}
Suppose \Cref{ass:propscoredist,ass:variance} hold, and the caliper $\delta_n$ satisfies \eqref{eq:caliper}. Then 
$$\sup_{x\in\real}\bigg\vert \probc{V_E^{-1/2}\sqrt{n}\eCDO\leq x}{\Dvector, \Pvector}-\Phi(x)\bigg\vert\convprob 0 \quad\text{ as $n\to\infty$},$$
for $V_E\ceq \fr{1}{n}\sum_{i\in[n]}(\indic{M_i>0}+w_i)^2\sigma_{D_i}^2(\pi(X_i))$ and standard normal distribution function $\Phi$.
\end{lemma}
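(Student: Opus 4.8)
The plan is to condition on $\filt_{n0}\ceq\sigma(\Dvector,\Pvector)$, with respect to which $M_i$, $\indic{M_i>0}$ and $w_i$ are all measurable, and to write $\sqrt n\eCDO=\sum_{i\in[n]}\xi_{ni}$ with $\xi_{ni}\ceq \fr{1}{\sqrt n}(2D_i-1)(\indic{M_i>0}+w_i)\varepsilon_i$ as a sum of martingale differences, to which the conditional martingale central limit theorem \Cref{lem:conditional_mclt} applies. Set $\filt_{ni}\ceq\sigma(\Dvector,\Pvector,\varepsilon_1,\ldots,\varepsilon_i)$. Since $((Y_j,D_j,X_j))_{j\in[n]}$ are i.i.d., $\varepsilon_i$ is independent of $\{\varepsilon_j,D_j,\pi(X_j):j\neq i\}$ given $(D_i,\pi(X_i))$, so $\Ebc{\varepsilon_i}{\filt_{n,i-1}}=\Ebc{\varepsilon_i}{D_i,\pi(X_i)}=0$ and $\Ebc{\varepsilon_i^2}{\filt_{n,i-1}}=\sigma_{D_i}^2(\pi(X_i))$; as $(2D_i-1)(\indic{M_i>0}+w_i)$ is $\filt_{n,i-1}$-measurable, the $\xi_{n1},\ldots,\xi_{nn}$ form a martingale difference array relative to $\filt_{n1}\subset\cdots\subset\filt_{nn}$ with $\sum_{i\in[n]}\Ebc{\xi_{ni}^2}{\filt_{n,i-1}}=V_E$, exactly the $\filt_{n0}$-measurable quantity in the statement.

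I would then verify the two conditions of \Cref{lem:conditional_mclt}. The variance condition \eqref{eq:mclt_var} amounts to $V_E\convprob V_{\sigma,\pi}$, which is \Cref{lem:efficiency}; the limit is finite (the $\sigma_d^2$ are bounded and $\pi(X)$ is bounded away from $0$ and $1$) and strictly positive (since $\inf_{d,p}\sigma_d^2(p)>0$) under \Cref{ass:propscoredist,ass:variance}. For the conditional Lindeberg condition \eqref{eq:mclt_lindeberg}, bound $\indic{|\xi_{ni}|\geq\eta}\leq \xi_{ni}^2/\eta^2$ to get
\begin{align*}
\sum_{i\in[n]}\Ebc{\xi_{ni}^2\indic{|\xi_{ni}|\geq\eta}}{\filt_{n0}}\leq \fr{1}{\eta^2}\sum_{i\in[n]}\Ebc{\xi_{ni}^4}{\filt_{n0}}=\fr{1}{\eta^2 n}\cdot\fr{1}{n}\sum_{i\in[n]}(\indic{M_i>0}+w_i)^4\,\Ebc{\varepsilon_i^4}{D_i,\pi(X_i)},
\end{align*}
where the conditional fourth moment is uniformly bounded by \Cref{ass:variance}\ref{ass:variance:varepsilon} and $\fr{1}{n}\sum_{i\in[n]}(\indic{M_i>0}+w_i)^4=\bigOP{1}$ follows from $\max_{i\in[n]}w_i=\bigOP{1}$, delivered by the expansion of $w_i$ through \Cref{lem:convergence_of_ratios}\ref{r_convergence}--\ref{rcheck_convergence} (as in the proof of \Cref{lem:efficiency}); hence the right-hand side is $\bigOP{1/n}=\smallOP{1}$. \Cref{lem:conditional_mclt} then yields $\probc{\sqrt n\eCDO\leq x}{\filt_{n0}}\convprob\Phi\bigl(x/\sqrt{V_{\sigma,\pi}}\bigr)$ for every $x\in\real$, and since the limit is a continuous distribution function this upgrades to $\sup_{x\in\real}\bigl|\probc{\sqrt n\eCDO\leq x}{\filt_{n0}}-\Phi(x/\sqrt{V_{\sigma,\pi}})\bigr|\convprob0$ by a conditional P\'olya argument.

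It remains to pass from the $V_{\sigma,\pi}$-normalisation to the $V_E$-normalisation. Since $V_E\convprob V_{\sigma,\pi}>0$, the normalisation $V_E^{-1/2}$ is well-defined with probability tending to one; writing $G_n(t)\ceq\probc{\sqrt n\eCDO\leq t}{\filt_{n0}}$ and using that $V_E$ is $\filt_{n0}$-measurable, the left side of the claim equals $G_n(xV_E^{1/2})$, and
\begin{align*}
\sup_{x\in\real}\bigl|G_n(xV_E^{1/2})-\Phi(x)\bigr|\leq\sup_{t\in\real}\bigl|G_n(t)-\Phi(t/\sqrt{V_{\sigma,\pi}})\bigr|+\sup_{x\in\real}\bigl|\Phi(xV_E^{1/2}/\sqrt{V_{\sigma,\pi}})-\Phi(x)\bigr|;
\end{align*}
the first term is $\smallOP{1}$ by the previous step and the second is $\smallOP{1}$ because $V_E^{1/2}/\sqrt{V_{\sigma,\pi}}\convprob1$ and $s\mapsto\sup_{x\in\real}|\Phi(sx)-\Phi(x)|$ is continuous at $s=1$ with value $0$ (the $\mathcal{N}(0,s^{-2})$ distribution function converges uniformly to $\Phi$ as $s\to1$). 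I expect the main obstacle to lie in \eqref{eq:mclt_var}--\eqref{eq:mclt_lindeberg}, i.e.\ in establishing $V_E\convprob V_{\sigma,\pi}$ and $\max_{i\in[n]}w_i=\bigOP{1}$; however both follow from the ratio bounds of \Cref{lem:convergence_of_ratios} and the computations already performed for \Cref{lem:efficiency}, so within this lemma the argument is mostly bookkeeping. For the data-dependent caliper $\delta_n=\caliperDelta$ the same steps go through, as \Cref{lem:convergence_of_ratios,lem:efficiency} cover that choice as well.
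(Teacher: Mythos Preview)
Your argument is correct and complete. The overall shape matches the paper's---condition on $(\Dvector,\Pvector)$, verify a Lindeberg condition via a fourth-moment bound, and appeal to \Cref{lem:efficiency} for the variance---but there are two genuine differences worth noting.

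First, the paper exploits that the $E_i$ are conditionally \emph{independent} (not merely martingale differences) given $(\Dvector,\Pvector)$, and applies a Lindeberg--Feller CLT directly with normalisation $V_E$ from the outset; you instead invoke \Cref{lem:conditional_mclt} with the constant limit $V_{\sigma,\pi}$ and then swap normalisations at the end via a P\'olya-type argument. Both are valid; the paper's packaging saves the swap, yours reuses the martingale lemma that is needed anyway for \Cref{thm:asymnorm_estimated_propscore_ate}.

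Second, and more substantively, your verification of the Lindeberg condition is more elementary than the paper's. You control $\fr{1}{n}\sum_{i\in[n]}(\indic{M_i>0}+w_i)^4$ by the uniform bound $\max_{i\in[n]}w_i=\bigOP{1}$, which indeed follows from the expansion in the proof of \Cref{lem:efficiency}: for $D_i=0$, say, $w_i=(N_1/N_0)(1+\smallOP{1})f_1(\pi(X_i))/f_0(\pi(X_i))$ uniformly in $i$, and the density ratio is bounded under \Cref{ass:propscoredist}. The paper instead establishes the weaker bound $\fr{1}{n}\sum_{i\in[n]}(\indic{M_i>0}+w_i)^4=\bigOP{n^\varrho}$ for arbitrary $\varrho>0$ by passing through \emph{unconditional} fourth moments, invoking Markov's inequality and the dedicated \Cref{lem:lindeberg_feller_bound}. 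Your route is tighter and bypasses \Cref{lem:lindeberg_feller_bound} entirely; the paper's route yields a moment bound that may be of independent use but is not needed for the lemma as stated.
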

\begin{proof}
We apply a Lindeberg--Feller central limit theorem as $E_i$, given $(\Dvector, \Pvector)$, are independently, but not identically, distributed with mean zero across $i\in[n]$ (the $M_i,w_i$ are constants given $(\Dvector,\Pvector)$). By \Cref{ass:propscoredist}, the $\mu^d$, and hence $\varepsilon$, are well-defined. By definition of $V_E$, $\Vb{\sum_{i\in[n]}E_i/\sqrt{nV_E}\mid \Dvector, \Pvector}=1$. Thus, we only need to verify the Lindeberg--Feller condition:
\begin{align}
\sum_{i\in[n]}\Ebc{(E_i/\sqrt{nV_E})^2\indic{|E_i/\sqrt{nV_E}|\geq \eta}}{\Dvector, \Pvector}\convprob 0 \quad \text{for all constants } \eta>0. \label{eq:error_lindeberg}
\end{align}
Since $\indic{|E_i/\sqrt{nV_E}|\geq \eta}$ is bounded by $E_i^2/(\eta^2nV_E)$ on $\set{E_i^2/(\eta^2nV_E)\geq 1}$, the expectation in \eqref{eq:error_lindeberg} satisfies
\begin{align}
\fr{1}{nV_E}\Ebc{E_i^2\indic{|E_i|\geq \eta\sqrt{nV_E}}}{\Dvector, \Pvector} &\leq\fr{1}{nV_E}\Ebc{E_i^4/(\eta^2nV_E)}{\Dvector, \Pvector}  \nonumber \\ 
&=\fr{\Ebc{E_i^4}{\Dvector, \Pvector}}{(\eta n V_E)^2}, \label{eq:error_lindebergbound}
\end{align}
where $\Ebc{E_i^4}{\Dvector, \Pvector}=(\indic{M_i>0}+ w_i)^4\Ebc{\varepsilon_i^4}{D_i, \pi(X_i)}$, with $\Ebc{\varepsilon_i^4}{D_i, \pi(X_i)}\leq \sup_{d\in\bin,p\in[0,1]}\Eb{\varepsilon_i^4\mid D_i=d, \pi(X_i)=p}<\infty$ by \Cref{ass:variance}. By \Cref{lem:efficiency},  $V_E\convprob V_{\sigma,\pi}\in(0,\infty)$, so \eqref{eq:error_lindebergbound} is bounded by 
$(1+\smallOP{1})n^{-1}\left(\nsumn{i}(\indic{M_i>0}+w_i)^4\right)$ up to a constant factor. Fix a constant $C>0$. By Markov's inequality,
$$\prob{\nsumn{i}(\indic{M_i>0}+w_i)^4>C}\leq C^{-1}\max_{i\in[n]}\E(\indic{M_i>0}+w_i)^4.$$
Below, we show that $n^{-\varrho}\max_{i\in[n]}\E(\indic{M_i>0}+w_i)^4=\bigO{1}$ for any $\varrho>0$, so that $$\nsumn{i}(\indic{M_i>0}+w_i)^4=\bigOP{n^{\varrho}}.$$ Then \eqref{eq:error_lindebergbound} is bounded by $(1+\smallOP{1})\bigOP{n^{\varrho-1}}$, which is $\smallOP{1}$ for $\varrho<1$, so \eqref{eq:error_lindeberg} is met.

As $w_i\geq0$ and $(1+x)^4\leq (2x)^4$ for $x\geq 1$, 
$$(\indic{M_i>0}+w_i)^4\leq 2^4+2^4 w_i^4\indic{w_i>1}.$$
We have $w_i\leq M_i \max_{j\in\calJC(i)}M_j^{-1}$ and hence 
$$\E w_i^4\indic{w_i>1}\leq \Eb{\indic{w_i>1}M_i^4\max_{j\in\calJC(i)}M_j^{-4}}.$$
 This yields the result by \Cref{lem:lindeberg_feller_bound}.
 
When the caliper is $\delta_n=\caliperDelta$, it is $\sigma\set{\Dvector,\Pvector}$-measurable, hence the $M_i,w_i$ are constants given $(\Dvector,\Pvector)$. \Cref{lem:lindeberg_feller_bound,lem:efficiency} complete the proof.
\end{proof}

\begin{lemma}[Lindeberg--Feller Bound]
\label{lem:lindeberg_feller_bound}
Suppose that the caliper $\delta_n$ satisfies \eqref{eq:caliper} and that \Cref{ass:propscoredist} holds. Then for any finite fixed constant integer $r\geq 2$ and any finite fixed constant $\varrho>0$,
\begin{align} 
\max_{i\in[n]} \E{\indic{w_i>1}\max_{j\in\calJC(i)}\left(\fr{M_i}{M_j}\right)^{r}}=\smallO{n^\varrho}. \label{eq:lindebergbound}
\end{align}
\end{lemma}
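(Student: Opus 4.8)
The plan is to reduce the statement to a bound on the raw $r$th moment of the match count $M_i$, and then to use that, conditionally on the treatments and on $\pi(X_i)$, the variable $M_i$ is Binomial with mean of order $\log n$. For the reduction, note that the factor $\indic{w_i>1}$ forces $\calJC(i)$ to be nonempty, and $\calJC$ is symmetric ($j\in\calJC(i)$ iff $i\in\calJC(j)$), so on $\{w_i>1\}$ every $j\in\calJC(i)$ has $M_j=|\calJC(j)|\geq1$; hence $M_i/M_j\leq M_i$ and $\indic{w_i>1}\max_{j\in\calJC(i)}(M_i/M_j)^r\leq M_i^r$. It therefore suffices to prove $\max_{i\in[n]}\E M_i^r=\bigO{(\log n)^r}$, since $(\log n)^r=\smallO{n^\varrho}$ for every fixed $\varrho>0$, which then gives \eqref{eq:lindebergbound}.

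\emph{The case $\delta_n=\caliperlognn$.} Here $\delta_n$ is nonrandom, and I would condition on $(\Dvector,\pi(X_i))$: given $\Dvector$ the $N_{1-D_i}$ units of the opposite treatment group are independent of unit $i$, with propensity scores i.i.d.\ $F_{1-D_i}$, so $M_i=\sum_{k:D_k\neq D_i}\indic{\pi(X_k)\in[\pi(X_i)\mypm\delta_n]}$ is $\mathrm{Binomial}\big(N_{1-D_i},F_{1-D_i}[\pi(X_i)\mypm\delta_n]\big)$. Its mean is at most $n\cdot2\delta_n\ba f=2s\ba f\log n=:\lambda_n$, where $\ba f\ceq\sup_{d,p}f_d(p)<\infty$ by \Cref{ass:propscoredist}. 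Using the factorial moments $\E[M(M-1)\cdots(M-k+1)]=N(N-1)\cdots(N-k+1)q^k\leq(Nq)^k$ of a $\mathrm{Binomial}(N,q)$ together with the identity $M^r=\sum_{k=1}^r S(r,k)\,M(M-1)\cdots(M-k+1)$, with $S(r,k)$ the Stirling numbers of the second kind, one obtains $\E[M_i^r\mid\Dvector,\pi(X_i)]\leq B_r\,(\lambda_n^r\vee1)$ with $B_r\ceq\sum_k S(r,k)$ the $r$th Bell number; taking expectations gives $\max_{i\in[n]}\E M_i^r\leq B_r\big((2s\ba f\log n)^r\vee1\big)=\bigO{(\log n)^r}$, as required.

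\emph{The case $\delta_n=\caliperDelta$.} Now the conditional law of $M_i$ is no longer Binomial, so instead I would write $M_i=N_{1-D_i}F_{1-D_i}[\pi(X_i)\mypm\delta_n](1+R_{1-D_i,i})\leq 2\ba f\,(N_{1-D_i}\delta_n)(1+W_{1-D_i})$ in the notation of \Cref{lem:convergence_of_ratios} (with $W_d\ceq\sup_{p}|\gproc_{N_d}[p\mypm\delta_n]|/(\sqrt{N_d}F_d[p\mypm\delta_n])$), and bound $\E\big[(N_{1-D_i}\delta_n)^r(1+W_{1-D_i})^r\big]$ by Cauchy--Schwarz: the first factor has $r$th moment $\bigO{(\log n)^r}$ by \Cref{prop:maxmindistorder}, \citet{cribari-neto_note_2000}, and moments of uniform spacings / exponential order statistics (\Cref{lem:spacingsmoments,lem:exponential_orderstats_moments}), while $\sup_n\E(1+W_{1-D_i})^{2r}<\infty$ follows from the tail bound \eqref{eq:wbound} in the proof of \Cref{lem:convergence_of_ratios} integrated against Hoeffding's inequality for $N_{1-D_i}/n$. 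The only step that needs genuine bookkeeping is this data-dependent-caliper case; for $\caliperlognn$ the proof is the one-line reduction above followed by a textbook Binomial moment estimate, the one slightly non-obvious point being that a unit can lie in a match set only if it has at least one match, which collapses $\max_{j}(M_i/M_j)^r$ to $M_i^r$.
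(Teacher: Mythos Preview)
Your proof is correct and takes a genuinely simpler route than the paper. The key observation you make---that on $\{w_i>1\}$ symmetry of caliper matching forces $M_j\geq1$ for every $j\in\calJC(i)$, whence $M_i/M_j\leq M_i$ and the whole expression is dominated by $M_i^r$---collapses the problem to a raw moment bound on a single match count. For the nonrandom caliper $\delta_n=\caliperlognn$ your Binomial-moment argument via factorial moments and Stirling numbers is clean, elementary, and entirely avoids the empirical-process machinery.

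By contrast, the paper never makes the reduction $M_i/M_j\leq M_i$. It instead writes $M_i/M_j$ as a product $(N_{1-d}/N_d)\cdot(F_{1-d}[\pi(X_i)\mypm\delta_n]/F_d[\pi(X_j)\mypm\delta_n])\cdot(1+R_{1-d,i})/(1+R_{d,j})$, applies Cauchy--Schwarz twice, and separately bounds $\E(N_1\indic{N_0\geq1}/N_0)^{2r}$, $\E(1+R_{1i})^{4r}\indic{N_1\geq1}$ (via the Alexander/van der Vaart--Wellner tail bound \eqref{eq:wbound} integrated over $\zeta$), and $\max_{j\in\calJC(i)}(\indic{M_j\geq1}/(1+R_{0j}))^r\lesssim(\log n)^r$. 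This is considerably heavier. Your approach buys a short, transparent proof for the deterministic caliper using nothing beyond the conditional Binomial structure; the paper's decomposition pays off only in that it treats the random caliper $\delta_n=\caliperDelta$ with the same tools throughout, whereas your sketch for that case has to invoke essentially the same empirical-process moment bound $\sup_n\E(1+W_{1-D_i})^{2r}<\infty$ anyway. Both approaches ultimately rely on \Cref{lem:spacingsmoments} and the Cribari-Neto inverse-binomial moment to control $(n\delta_n)^{2r}$ in the data-dependent case, so there the gain is modest; but for $\caliperlognn$ your argument is strictly more economical.
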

\begin{proof} 
If $w_i>1$ for some $i\in[n]$, then $M_i\geq1$ and hence also $M_j\geq 1$ for all $j\in\calJC(i)$, as $j\in\calJC(i)$ if and only if $i\in\calJC(j)$. This in turn implies $N_0,N_1\geq 1$. Thus, $\indic{w_i>1}\leq \indic{M_j\geq 1}\indic{N_0\geq 1}\indic{N_1\geq 1}$ for all $j\in\calJC(i)$ for all $i\in[n]$. By definition, $M_i=N_{1-d}\mathbbm{F}_{N_{1-d}}[\pi(X_i)\mypm\delta_n]$ if $D_i=d$. Then, in the notation of \eqref{eq:def_measures} and \eqref{eq:def_r}, the left side of \eqref{eq:lindebergbound} is bounded by
\begin{align}
\sum_{d\in\dummy}\max_{i:D_i=d} \Eb{\left(\fr{N_{1-d}\indic{N_d\geq1}}{N_d}\right)^r \max_{j\in\calJC(i)}\left(\fr{F_{1-d}[\pi(X_i)\mypm\delta_n]}{F_d[\pi(X_j)\mypm\delta_n]}\fr{1+R_{1-d,i}}{1+R_{d,j}}\indic{N_{1-d}\geq1}\indic{M_j\geq 1}\right)^r} \nonumber \\
\lesssim \sum_{d\in\dummy}\max_{i:D_i=d} \Eb{\left(\fr{N_{1-d}\indic{N_d\geq1}}{N_d}\right)^r \max_{j\in\calJC(i)}\left(\fr{1+R_{1-d,i}}{1+R_{d,j}}\indic{N_{1-d}\geq1}\indic{M_j\geq 1}\right)^r}, \label{eq:lindebergbound_bound1}
\end{align}
where the second line follows from \Cref{ass:propscoredist}: because $f_0,f_1$ have the same support, are bounded away from zero and infinity, we have, for $c\neq0$, $0<\fr{2c\inf_{p\in[\underline p,\ba p]} f_{1-d}(p)}{2c\sup_{p\in[\underline p, \ba p]} f_d(p)}\leq F_{1-d}[a\mypm c]/F_d[b\mypm c]\leq \fr{2c\sup_{p\in[\underline p,\ba p]} f_{1-d}(p)}{2c\inf_{p\in[\underline p, \ba p]} f_d(p)}<\infty$. We address the case $d=0$ in \eqref{eq:lindebergbound_bound1}; $d=1$ follows by symmetry. For the expectation in \eqref{eq:lindebergbound_bound1}, two applications of the Cauchy--Schwarz inequality give
\begin{align}
 \Eb{\left(\fr{N_{1}\indic{N_0\geq1}}{N_0}\right)^r \max_{j\in\calJC(i)}\left(\fr{1+R_{1i}}{1+R_{0j}}\indic{N_{1}\geq1}\indic{M_j\geq 1}\right)^r} \nonumber \\
  \leq \sqrt{\E(N_1\indic{N_0\geq1}/N_0)^{2r}\sqrt{\E(1+R_{1i})^{4r}\indic{N_{1}\geq1}\E\left(\max_{j\in\calJC(i)}(\indic{M_j\geq 1}/(1+R_{0j}))^{r}\right)^4}}. \label{eq:lindebergbound_bound2}
\end{align}
Here $\E(N_1\indic{N_0\geq1}/N_0)^{2r}\leq n^{2r}\E(\indic{N_0\geq1}/N_0)^{2r}$ with
\begin{align*}
\E(\indic{N_0\geq1}/N_0)^{2r} &= \sum_{n_0=1}^n {n \choose n_0}(1-p_1)^{n_0}p_1^{n-n_0}\left(\fr{1}{n_0}\right)^{2r} \\
&= \sum_{n_0=0}^{n-1} \fr{n}{n_0+1}{n-1 \choose n_0}(1-p_1)^{n_0+1}p_1^{n-(n_0+1)}\left(\fr{1}{n_0+1}\right)^{2r} \\
&= n \fr{1-p_1}{p_1}\sum_{n_0=0}^{n-1}{n-1 \choose n_0}(1-p_1)^{n_0}p_1^{n-n_0}\left(\fr{1}{n_0+1}\right)^{2r+1}.
\end{align*}
The last line is $\bigO{n^{-2r}}$ by \cite{cribari-neto_note_2000}, thus $\E(N_1\indic{N_0\geq1}/N_0)^{2r}=\bigO{1}$. 

Next, we bound the second factor of \eqref{eq:lindebergbound_bound2}. We have $\E(1+R_{1i})^{4r}\indic{N_{1}\geq1}\leq 2^{4r}+c\E |R_{1i}|^{4r}\indic{N_{1}\geq1}$ for some constant $c>0$, where $\max_{i:D_i=0}|R_{1i}|\leq W_1$ for $W_1$ in \eqref{eq:rbound}. Because $W_1\geq0$, 
$$\Ebc{W_1^{4r}}{\Dvector}=\int_0^\infty \prob{W_1^{4r}>w\mid \Dvector}\deriv w=\int_0^\infty \prob{W_1>w^{\fr{1}{4r}}\mid \Dvector}\deriv w.$$
By \eqref{eq:wbound} of \Cref{lem:convergence_of_ratios},
\begin{align}
\Ebc{W_1^{4r}}{\Dvector} \leq & \fr{c_0}{N_1\gamma_{n}}\int_{0}^\infty \fr{1}{w^{\fr{2}{4r}}}e^{-w^{\fr{2}{4r}}N_1\gamma_{n}/1024}\deriv w+c_1\int_0^\infty e^{-w^{\fr{1}{4r}}N_1\gamma_{n}/256}\deriv w 
\label{eq:expectedwbound1} \\
&+ cN_1\int_0^\infty {w^{\fr{2}{4r}}} e^{-\fr{N_1w^{\fr{2}{4r}}}{2}}\deriv w.\label{eq:expectedwbound2}
\end{align}
The integral in the first term of \eqref{eq:expectedwbound1} is $\fr{c_0}{N_1\gamma_{n}}\fr{1}{\lambda_{n,N_1}}\int_0^\infty t^{-1}\lambda_{n,N_1}e^{-\lambda_{n,N_1}t}t^{2r-1}\deriv t$, where $\lambda_{n,N_1}\ceq \fr{N_1\gamma_n}{1024}$ is strictly positive for $N_1\geq1$. This integral is the $(2r-2)$th moment of an Exponential$(\lambda_{n,N_1})$ variable, which is well-defined for $r\geq 2$ and for finite integer $r$ is bounded by $\left(\fr{1}{\lambda_{n,N_1}}\right)^{2r-2}$ up to a constant factor. Hence, the first term of \eqref{eq:expectedwbound1} is bounded by $c_0\left(\fr{1}{N_1\gamma_{n}}\right)^{2r}\simeq c_0\left(\fr{n}{N_1\log n}\right)^{2r}$. Similar arguments show that the second integrals of \eqref{eq:expectedwbound1} and \eqref{eq:expectedwbound2} are bounded by $c_1\left(\fr{1}{N_1\gamma_{n}}\right)^{4r}$ and $c\left(\fr{1}{N_1}\right)^{2r}$, respectively. Conclude that
\begin{align*}
\E(1+ R_{1i})^{4r}\indic{N_{1}\geq1} \leq \bigO{1}+c_0 n^{2r}\E \left(\fr{\indic{N_{1}\geq1}}{N_1}\right)^{2r} + c_1 n^{4r}\E \left(\fr{\indic{N_{1}\geq1}}{N_1}\right)^{4r}.
\end{align*}
By arguments bounding $\E (N_1\indic{N_0\geq 1}/N_0)^{2r}$ of \eqref{eq:lindebergbound_bound2}, the right side is $\bigO{1}$.

Finally, the last factor of \eqref{eq:lindebergbound_bound2} satisfies 
\begin{align}
\max_{j\in\calJC(i)}(\indic{M_j\geq 1}/(1+R_{0j}))^{r} =\max_{j\in\calJC(i)} \left(\fr{N_0F_0[\pi(X_j)\mypm\delta_n]}{M_j}\indic{M_j\geq 1}\right)^r 
& \leq (2n\supnorm{f_0}\delta_n)^r \label{eq:lindebergbound_bound2_bound} \\
&\lesssim (\log n)^r. \nonumber
\end{align}
As $(\log n)^r=\smallO{n^\varrho}$ for any $\varrho>0$, \eqref{eq:lindebergbound} follows.

When the caliper is $\delta_n=\caliperDelta$, \eqref{eq:expectedwbound1} holds with $\gamma_n\ceq 2\inf_{p\in[\underline p,\ba p]}f_1(p)\fr{\log N_1}{N_1+1}$ in view of the proof of \Cref{lem:convergence_of_ratios}, showing $\E(1+ R_{1i})^{4r}\indic{N_{1}\geq1}=\bigO{1}$.
Consider \eqref{eq:lindebergbound_bound2_bound}, wherein $\delta_n^r\leq 2^r\left(\maxminDelta^r+(2\log n)^r\left(\fr{1}{N_0+1}\right)^r+\left(\fr{1}{N_1+1}\right)^r\right)$. Here, $\E\left(\fr{1}{N_d+1}\right)^r=\bigO{n^{-r}}$ by \cite{cribari-neto_note_2000}. Arguments in the proof of \Cref{prop:maxmindistorder} and \Cref{lem:spacingsmoments} imply $n^r\E\maxminDelta^r=\bigO{(\log n)^r}$. Conclude that $n^r\E \delta_n^r=\bigO{(\log n)^r}$ and hence \eqref{eq:lindebergbound} follows from \eqref{eq:lindebergbound_bound2_bound}.
\end{proof}

\begin{lemma}[Semiparametric Efficiency]
\label{lem:efficiency}
If \Cref{ass:propscoredist,ass:variance} hold, and the caliper $\delta_n$ satisfies \eqref{eq:caliper}, then $V_E\convprob V_{\sigma,\pi}$ as $n\to\infty$.
\end{lemma}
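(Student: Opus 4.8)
The plan is to expand the square in $V_E=\tfrac1n\sum_{i\in[n]}(\indic{M_i>0}+w_i)^2\sigma_{D_i}^2(\pi(X_i))$ and identify the limit of each resulting average. First I would use \Cref{prop:number_of_matches}: since $\prob{\min_{i\in[n]}M_i>0}\to1$, on an event of probability tending to one every $\indic{M_i>0}$ equals $1$, so it suffices to show $\tfrac1n\sum_{i\in[n]}(1+w_i)^2\sigma_{D_i}^2(\pi(X_i))\convprob V_{\sigma,\pi}$; expanding $(1+w_i)^2=1+2w_i+w_i^2$ reduces this to finding the probability limits of $\tfrac1n\sum_i\sigma_{D_i}^2(\pi(X_i))$, $\tfrac1n\sum_i w_i\sigma_{D_i}^2(\pi(X_i))$ and $\tfrac1n\sum_i w_i^2\sigma_{D_i}^2(\pi(X_i))$.

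The key tool is a uniform expansion of $w_i$, the known-$\pi$ analogue of \eqref{eq:wtilde_expansion}: for any $(Q_i)_{i\in[n]}$ and fixed $r\in\{1,2\}$,
\[\sum_{i\in[n]}w_i^rQ_i=(1+\smallOP{1})\Big\{\Big(\tfrac{N_1}{N_0}\Big)^r\!\!\sum_{i:D_i=0}\!\Big(\tfrac{f_1(\pi(X_i))}{f_0(\pi(X_i))}\Big)^r\!Q_i+\Big(\tfrac{N_0}{N_1}\Big)^r\!\!\sum_{i:D_i=1}\!\Big(\tfrac{f_0(\pi(X_i))}{f_1(\pi(X_i))}\Big)^r\!Q_i\Big\}.\]
I would derive it by writing, for $j\in\calJC(i)$ with $D_i=d$ (so $D_j=1-d$ and $M_j=N_d\,\mathbbm{F}_{N_d}[\pi(X_j)\mypm\delta_n]$), $M_j=N_d\,F_d[\pi(X_j)\mypm\delta_n](1+R_{dj})=2\delta_nN_df_d(\pi(X_i))(1+\smallOP{1})$ uniformly --- using $\mathbbm{F}_{N_d}/F_d=1+R_{dj}$, $F_d[\,\cdot\mypm\delta_n]=2\delta_nf_d(\cdot)(1+\tld R_{dj})$ and $f_d(\pi(X_j))=f_d(\pi(X_i))(1+\smallOP{1})$ from \Cref{lem:convergence_of_ratios}\ref{r_convergence}--\ref{rcheck_convergence} --- then summing $1/M_j$ over the $M_i=2\delta_nN_{1-d}f_{1-d}(\pi(X_i))(1+\smallOP{1})$ indices $j\in\calJC(i)$ and cancelling $\delta_n$. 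Here \Cref{ass:propscoredist} (densities continuous, bounded, and bounded away from $0$ on the common compact support $[\underline p,\ba p]$ with $0<\underline p<\ba p<1$) keeps all ratios $f_{1-d}/f_d$ bounded, so the single $(1+\smallOP{1})$ factor is uniform in $i$.

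Plugging $Q_i=\sigma_{D_i}^2(\pi(X_i))$ and $r=1,2$ into the expansion, dividing by $n$, and applying the strong law of large numbers --- $N_d/n\convas p_d$ with $p_0\ceq1-p_1$, and $\tfrac1{N_d}\sum_{i:D_i=d}g(\pi(X_i))\convas\Eb{g(\pi(X))\mid D=d}=\int gf_d$ for bounded $g$ (bounded by \Cref{ass:variance}) --- I would rewrite every limit using the propensity-score Bayes identities $p_1f_1(p)=p\,f_\pi(p)$ and $p_0f_0(p)=(1-p)f_\pi(p)$, which follow from $\Eb{D\mid\pi(X)}=\pi(X)$ and Bayes' rule, with $f_\pi\ceq p_1f_1+p_0f_0$ the marginal density of $\pi(X)$. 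This yields $\tfrac1n\sum_i\sigma_{D_i}^2(\pi(X_i))\convprob\Eb{(1-\pi(X))\sigma_0^2(\pi(X))+\pi(X)\sigma_1^2(\pi(X))}$, $\tfrac1n\sum_iw_i\sigma_{D_i}^2(\pi(X_i))\convprob\Eb{\pi(X)\sigma_0^2(\pi(X))+(1-\pi(X))\sigma_1^2(\pi(X))}$, and $\tfrac1n\sum_iw_i^2\sigma_{D_i}^2(\pi(X_i))\convprob\Eb{\tfrac{\pi(X)^2}{1-\pi(X)}\sigma_0^2(\pi(X))+\tfrac{(1-\pi(X))^2}{\pi(X)}\sigma_1^2(\pi(X))}$. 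Adding the three with weights $1,2,1$, the coefficient of $\sigma_0^2(\pi(X))$ collapses via $(1-\pi)^2+2\pi(1-\pi)+\pi^2=1$ to $1/(1-\pi(X))$ and that of $\sigma_1^2(\pi(X))$ to $1/\pi(X)$, giving $V_E\convprob\Eb{\tfrac{\sigma_0^2(\pi(X))}{1-\pi(X)}+\tfrac{\sigma_1^2(\pi(X))}{\pi(X)}}=V_{\sigma,\pi}$, finite and positive by \Cref{ass:propscoredist,ass:variance}.

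The main obstacle I anticipate is the uniform-in-$i$ bookkeeping in the $w_i$-expansion: merging the several $(1+\smallOP{1})$ correction factors supplied by \Cref{lem:convergence_of_ratios} into one, uniformly over both $i\in[n]$ and $j\in\calJC(i)$, and checking that the surviving factors $N_{1-d}/N_d$ and $f_{1-d}/f_d$ stay uniformly bounded so that averaging against the bounded $\sigma_{D_i}^2$ preserves $\smallOP{1}$. For the data-dependent caliper $\delta_n=\caliperDelta$ the argument is identical, now invoking \Cref{lem:convergence_of_ratios}\ref{r_convergence}--\ref{rcheck_convergence} together with \Cref{prop:maxmindistorder} and the law of large numbers to see $\delta_n=\smallOP{1}$, exactly as in the companion proofs.
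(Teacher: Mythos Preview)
Your proposal is correct and follows essentially the same approach as the paper's proof: reduce to $(1+w_i)^2$ via \Cref{prop:number_of_matches}, expand the square, derive the uniform $w_i$-expansion from \Cref{lem:convergence_of_ratios}\ref{r_convergence}--\ref{rcheck_convergence}, apply the law of large numbers together with the Bayes identities $p_1f_1(p)=p\,f_\pi(p)$ and $(1-p_1)f_0(p)=(1-p)f_\pi(p)$, and collect terms. The paper carries out the $w_i$-expansion in-line rather than invoking the formula \eqref{eq:wtilde_expansion} (which is in fact derived later by reference back to this very lemma), but the substance is identical.
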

\begin{proof}
By \Cref{prop:number_of_matches}, $\min_{i\in[n]}M_i>0$ with probability tending to one, so with probability tending to one,
\begin{align}
V_E&= \fr{1}{n}\sum_{i\in[n]}(1+ w_i)^2\sigma_{D_i}^2(\pi(X_i)) 
	=\fr{1}{n}\sum_{i\in[n]}(1+2 w_i+ w_i^2)\sigma_{D_i}^2(\pi(X_i)) \nonumber \\
	&=\E \sigma_D^2(\pi(X))+\smallOP{1}+\fr{2}{n}\sum_{i\in[n]} w_i\sigma_{D_i}^2(\pi(X_i))+\fr{1}{n}\sum_{i\in[n]} w_i^2\sigma_{D_i}^2(\pi(X_i)) \label{eq:variance_efficiency}
\end{align}
by the law of large of large numbers. In the notation of \eqref{eq:def_measures}, we have, by definition,
\begin{align}
\sum_{i\in[n]}w_i^r\sigma_{D_i}^2(\pi(X_i))=&\, \left(\fr{N_1}{N_0}\right)^r\sum_{i:D_i=0}\left(\fr{1}{N_1}\sum_{j\in\calJC(i)}\fr{1}{\mathbbm{F}_{N_0}[\pi(X_j)\mypm\delta_n]}\right)^r\sigma_{D_i}^2(\pi(X_i))\nonumber \\
	&+ \left(\fr{N_0}{N_1}\right)^r\sum_{i:D_i=1}\left(\fr{1}{N_0}\sum_{j\in\calJC(i)}\fr{1}{\mathbbm{F}_{N_1}[\pi(X_j)\mypm\delta_n]}\right)^r\sigma_{D_i}^2(\pi(X_i)) \label{eq:variance_efficiency2}.
\end{align}
Write $\mathbbm{F}_{N_0}[\pi(X_j)\mypm\delta_n]=(1+R_{0j})F_0[\pi(X_j)\mypm\delta_n]$ and $F_0[\pi(X_j)\mypm\delta_n]=2\delta_n f_0(\pi(X_j))(1+\tld R_{0j})$ for $R_{0j},\tld R_{0j}$ of \eqref{eq:def_r}, \eqref{eq:def_rtld}. By \Cref{lem:convergence_of_ratios},  
\begin{align*}
\max_{j:D_j=1}|R_{0j}|=\smallOP{1}\quad\text{ and }\quad \max_{j:D_j=1}|\tld R_{0j}|=\smallOP{1}.
\end{align*}
Then we can write
\begin{align*}
\fr{1}{N_1}\sum_{j\in\calJC(i)}\fr{1}{\mathbbm{F}_{N_0}[\pi(X_j)\mypm\delta_n]} &=  \fr{1}{N_1}\sum_{j\in\calJC(i)}\fr{1}{F_0[\pi(X_j)\mypm\delta_n]}\fr{1}{1+R_{0j}} \nonumber \\
&=\fr{1}{N_1}\sum_{j\in\calJC(i)}\fr{1}{2\delta_n f_0(\pi(X_j))}\fr{1}{1+\tld R_{0j}}\fr{1}{1+R_{0j}} \nonumber \\
& = \fr{1+\smallOP{1}}{N_1}\sum_{j\in\calJC(i)}\fr{1}{2\delta_n f_0(\pi(X_j))}
\end{align*}
where the $\smallOP{1}$ terms are uniform in $i\in[n]$. Write $f_0(\pi(X_j))=f_0(\pi(X_i))(1+\check R_{0ji})$ for $\check R_{0ji}$ of \eqref{eq:def_rcheck}. By \Cref{lem:convergence_of_ratios}, $\max_{i:D_i=0}\max_{j\in\calJC(i)}|\check R_{0ji}|=\smallOP{1}$. Then, by the continuous mapping theorem,
\begin{align*}
\fr{1}{N_1}\sum_{j\in\calJC(i)}\fr{1}{2\delta_n f_0(\pi(X_j))}&=\fr{1}{N_1}\sum_{j\in\calJC(i)}\fr{1}{2\delta_n f_0(\pi(X_i))}\fr{1}{1+\check R_{0ji}} \\
&=(1+\smallOP{1})\fr{f_1(\pi(X_i))}{f_0(\pi(X_i))}\fr{\mathbbm{F}_{N_1}[\pi(X_i)\mypm\delta_n]}{2\delta_n f_1(\pi(X_i))},
\end{align*}
because $\inf_{p\in[\underline p, \ba p]} f_1(p)>0$ by \Cref{ass:propscoredist}. Write 
$$\fr{\mathbbm{F}_{N_1}[\pi(X_i)\mypm\delta_n]}{2\delta_n f_1(\pi(X_i))}=\fr{\mathbbm{F}_{N_1}[\pi(X_i)\mypm\delta_n]}{F_1[\pi(X_i)\mypm\delta_n]}\fr{F_1[\pi(X_i)\mypm\delta_n]}{2\delta_n f_1(\pi(X_i))},$$
where $\max_{i:D_i=0}\abs{\fr{\mathbbm{F}_{N_1}[\pi(X_i)\mypm\delta_n]}{F_1[\pi(X_i)\mypm\delta_n]}-1}=\smallOP{1}$ and $\max_{i:D_i=0}\abs{\fr{F_1[\pi(X_i)\mypm\delta_n]}{2\delta_n f_1(\pi(X_i))}-1}=\smallOP{1}$ by \Cref{lem:convergence_of_ratios}. By symmetry, similar arguments apply to the second term of \eqref{eq:variance_efficiency2}. Then  \eqref{eq:variance_efficiency2}, divided by $n$, is equal to
\begin{align}
\fr{1+\smallOP{1}}{n}\left[\left(\fr{N_1}{N_0}\right)^r\sum_{i:D_i=0}\left(\fr{f_1(\pi(X_i))}{f_0(\pi(X_i))}\right)^r\sigma_{D_i}^2(\pi(X_i)) \right. \nonumber \\
	\left. + \left(\fr{N_0}{N_1}\right)^r\sum_{i:D_i=1}\left(\fr{f_0(\pi(X_i))}{f_1(\pi(X_i))}\right)^r\sigma_{D_i}^2(\pi(X_i))\right] \nonumber \\
	 \convprob \Eb{(1-D)\left(\fr{p_1}{1-p_1}\fr{f_1(\pi(X))}{f_0(\pi(X))}\right)^r\sigma_{D}^2(\pi(X))} \label{eq:problim_exp_1} \\
	 +\Eb{D\left(\fr{1-p_1}{p_1}\fr{f_0(\pi(X))}{f_1(\pi(X))}\right)^r\sigma_D^2(\pi(X))} \label{eq:problim_exp_2},
\end{align}
by the weak law of large numbers as $(N_d/N_{1-d})^r\convas \left(\fr{p_d}{1-p_d}\right)^r$ and the $f_d/f_{1-d}$ are uniformly bounded by \Cref{ass:propscoredist} and the $\sigma_d^2$ are bounded by \Cref{ass:variance}.
For $r=1$, \eqref{eq:problim_exp_1} is 
\begin{align*}
\Ebc{\fr{p_1}{1-p_1}\fr{f_1(\pi(X))}{f_0(\pi(X))}\sigma_0^2(\pi(X))}{D=0}(1-p_1)&=\int_{\underline p}^{\ba p}p_1\fr{f_1(p)}{f_0(p)}\sigma_0^2(p)f_0(p)\deriv p \\
	&= \E \pi(X)\sigma_0^2(\pi(X)),
\end{align*}
where we used that $f_1(p)=\fr{p}{p_1}f_{\pi(X)}(p)$, with $f_{\pi(X)}$ being the density of $\pi(X)$. Noting that $f_0(p)=\fr{1-p}{1-p_1}f_{\pi(X)}(p)$, \eqref{eq:problim_exp_2} is $\E(1-\pi(X))\sigma_1^2(\pi(X))$. For $r=2$, the same arguments yield $\E \fr{\pi(X)^2}{1-\pi(X)}\sigma_0^2(\pi(X))$ for \eqref{eq:problim_exp_1}, and $\E \fr{(1-\pi(X))^2}{\pi(X)}\sigma_1^2(\pi(X))$ for \eqref{eq:problim_exp_2}. Note that $\E \sigma_D^2(\pi(X))=\E(1-\pi(X))\sigma_0^2(\pi(X))+\E \pi(X)\sigma_1^2(\pi(X))$.
Collect the terms to get the assertion, which also holds for $\delta_n=\caliperDelta$ in view of \Cref{lem:convergence_of_ratios}.
\end{proof}

\end{appendix}




\newpage
\begin{appendix}
\section*{Supplement}\label{app:online} 

\begin{proof}[Proof of \Cref{prop:semipara_eff}]
The semiparametric lower bound is the variance of the efficient influence function. When the observed sample is $((Y_i,D_i,X_i))_{i\in[n]}$, the efficient influence function of ATE is
\begin{align*}
\chi_{\Xsupp}(Y,D,X)\ceq \fr{D(Y-\mu_\Xsupp^1(X))}{\pi(X)}-\fr{(1-D)(Y-\mu_\Xsupp^0(X))}{1-\pi(X)}+\mu_\Xsupp^1(X)-\mu_\Xsupp^0(X)-\tau,
\end{align*}
see \cite{hahn_role_1998}. Then $V=V_\mathrm{eff}$ follows under \eqref{cond:semipara_eff:c2} in \Cref{prop:semipara_eff}. One can verify that $V=\Vb{\chi(Y,D,\pi(X))}$, where
\begin{align*}
\chi(Y,D,\pi(X))\ceq&\, \fr{D(Y-\mu^1(\pi(X)))}{\pi(X)}-\fr{(1-D)(Y-\mu^0(\pi(X)))}{1-\pi(X)} \\
&+\mu^1(\pi(X))-\mu^0(\pi(X))-\tau.
\end{align*}
\Cref{ass:ucf} and $\Ebc{D}{X}=\pi(X)$ imply that 
$$\E \chi_\Xsupp(Y,D,X)(\chi(Y,D,\pi(X))-\chi_\Xsupp(Y,D,X))=0,$$
thus 
\begin{align*}
V=\Vb{\chi(Y,D,\pi(X))} &=\Vb{\chi(Y,D,\pi(X))-\chi_\Xsupp(Y,D,X)}+\Vb{\chi_\Xsupp(Y,D,X)} \\
	&\geq \Vb{\chi_\Xsupp(Y,D,X)}=V_\mathrm{eff}.
\end{align*}

For ATT, $V_{\mathrm{t},\mathrm{eff}}\leq V_{\mathrm{t}}$ follows similarly as the efficient influence function of ATT under unknown propensity score \citep{hahn_role_1998} is
\begin{align*}
\chi_{\mathrm{t},\Xsupp}(Y,D,X)\ceq &\, \fr{D}{p_1}(Y-\mu_\Xsupp^1(X))-\fr{1-D}{p_1}\fr{\pi(X)}{1-\pi(X)}(Y-\mu_\Xsupp^0(X)) \\ 
&+\fr{D}{p_1}(\mu_\Xsupp^1(X)-\mu_\Xsupp^0(X)-\taut).
\end{align*}
 Under \Cref{ass:ucf}, one verifies that $V_\mathrm{t}=\Vb{\chi_{\mathrm{t}}(Y,D,\pi(X))}$, where
 \begin{align*}
\chi_{\mathrm{t}}(Y,D,\pi(X))\ceq &\, \fr{D}{p_1}(Y-\mu^1(\pi(X)))-\fr{1-D}{p_1}\fr{\pi(X)}{1-\pi(X)}(Y-\mu^0(\pi(X))) \\ 
&+\fr{D}{p_1}(\mu^1(\pi(X))-\mu^0(\pi(X))-\taut),
\end{align*}
and that 
$$\E \chi_{\mathrm{t},\Xsupp}(Y,D,X)(\chi_{\mathrm{t}}(Y,D,\pi(X))-\chi_{\mathrm{t}, \Xsupp}(Y,D,X))=0,$$ thus proving the assertion.

\end{proof}

\begin{lemma}[Moment Bounds of Ordered Uniform Spacings]
\label{lem:spacingsmoments}
Let the order statistics of $(U_1,U_2,\ldots,U_n)\overset{\text{i.i.d.}}{\sim}\text{Uniform}(0,1)$ be $U_{(1)}\leq U_{(2)}\leq \ldots\leq U_{(n)}$. Let $\tilde U_1\ceq U_{(1)}$, $\tilde U_i\ceq U_{(i)}-U_{(i-1)}$ for $i=2,\ldots,n$ and $\tilde U_{n+1}\ceq 1-U_{(n)}$ be the spacings generated by $(U_i)_{i\in[n]}$. Let $\tilde U_{(1)}\leq \tilde U_{(2)}\leq\ldots\leq \tilde U_{(n+1)}$ be the ordered spacings. Then for any finite fixed integer $1\leq a < \fr{n+1}{2}$ and $n\geq2$, 
\begin{align*}
\E \tld U_{(r)}^a &= \begin{cases}
\bigO{\left(\fr{1}{n}\right)^{2a}} \quad&\text{ for } r=1 \\
\bigO{\left(\fr{\log r}{n}\right)^{a}} \quad&\text{ for all } r=2,3,\ldots,n+1.
\end{cases}
\end{align*}
In particular, $\E\tld U_{(r)}^a=\smallO{1}$ for all $r\in[n+1]$ and finite fixed integer $1\leq a< \fr{n+1}{2}$.
\end{lemma}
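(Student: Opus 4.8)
The plan is to transfer the problem to exponential random variables, where every quantity is explicit. Write $(\tld U_1,\ldots,\tld U_{n+1})\os{d}{=}(\xi_1/S,\ldots,\xi_{n+1}/S)$ for i.i.d.\ $\mathrm{Exp}(1)$ variables $\xi_1,\ldots,\xi_{n+1}$ with $S\ceq\sum_{i=1}^{n+1}\xi_i$; this is the classical representation of uniform spacings as a normalised vector of exponentials \cite{shorack_empirical_2009}, together with the fact that the normalised vector $(\xi_i/S)_i$ is independent of $S$ (the Gamma--Dirichlet factorisation). Sorting preserves this independence, so $\tld U_{(r)}\os{d}{=}\xi_{(r)}/S$ with $\xi_{(r)}/S\indep S$, and hence $\E\tld U_{(r)}^a=\E\xi_{(r)}^a/\E S^a$ for every $r\in[n+1]$ (all moments here are finite, since $\tld U_{(r)}\le1$ and $S$ is Gamma). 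Because $S\sim\mathrm{Gamma}(n+1,1)$, $\E S^a=\prod_{j=0}^{a-1}(n+1+j)\ge n^a$, which is all the lower bound I need; the restriction $a<\tfrac{n+1}{2}$ additionally gives $\E S^a\le(3n/2)^a$, so in fact $\E S^a\asymp n^a$.

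It remains to bound $\E\xi_{(r)}^a$. By Rényi's representation of exponential order statistics, $\xi_{(r)}\os{d}{=}\sum_{k=1}^{r}\xi_k'/(n+2-k)$ for fresh i.i.d.\ $\mathrm{Exp}(1)$ variables $\xi_k'$, with mean $\mu_r\ceq\sum_{k=1}^{r}1/(n+2-k)$. Expanding the $a$-th power, $\E\xi_{(r)}^a=\sum_{k_1,\ldots,k_a\in[r]}\big(\prod_{j}(n+2-k_j)^{-1}\big)\E[\xi_{k_1}'\cdots\xi_{k_a}']$, and using $\E[\xi_{k_1}'\cdots\xi_{k_a}']=\prod_v m_v!\le a!$, where $m_v$ is the multiplicity of the value $v$ among $k_1,\ldots,k_a$, yields the clean bound $\E\xi_{(r)}^a\le a!\,\mu_r^a$. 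Then I would bound $\mu_r$: $\mu_1=1/(n+1)$; for $2\le r\le n$ the integral comparison gives $\mu_r\le\log\tfrac{n+1}{n+1-r}$, and the elementary inequality $r^2(n+1-r)\ge n+1$ (which holds for $2\le r\le n$, $n\ge2$, since $r\mapsto r^2(n+1-r)$ is unimodal and both endpoints satisfy it) upgrades this to $\mu_r\le2\log r$; finally $\mu_{n+1}=H_{n+1}\le2\log(n+1)$ for $n\ge2$, where $H_m\ceq\sum_{l=1}^m 1/l$. Combining with Step one: $\E\tld U_{(1)}^a\le a!\,(n+1)^{-a}n^{-a}=\bigO{n^{-2a}}$, and $\E\tld U_{(r)}^a\le a!\,(2\log r)^a n^{-a}=\bigO{(\log r/n)^a}$ for $2\le r\le n+1$. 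The "in particular" statement is then immediate, as each bound tends to $0$ for fixed $a$.

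The argument is largely bookkeeping; the only genuinely delicate point is the uniform-in-$r$ control of $\mu_r$ (equivalently of $\E\tld U_{(r)}$), which interpolates between $\asymp r/n$ for small $r$ and $\asymp\log n/n$ for $r$ near $n+1$, so one must verify that $2\log r/n$ dominates throughout the whole range $2\le r\le n+1$, including the boundary cases $r=2$, $r=n+1$, and small $n$ — this is exactly where the inequality $r^2(n+1-r)\ge n+1$ does the work. The combinatorial moment bound $\E\xi_{(r)}^a\le a!\,\mu_r^a$ deliberately avoids Rosenthal-type inequalities and keeps all constants explicit and independent of $n$, which is precisely what the downstream applications (notably the proof of \Cref{lem:lindeberg_feller_bound}) need.
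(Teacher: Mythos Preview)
Your proof is correct, and it takes a genuinely different route from the paper's.

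The paper also starts from the exponential representation $\tld U_{(r)}\os{d}{=}Z_{(r)}/S$, but it does \emph{not} exploit the independence of the normalised vector and $S$. Instead it applies Cauchy--Schwarz,
\[
\E\tld U_{(r)}^a\le\sqrt{\E Z_{(r)}^{2a}\,\E S^{-2a}},
\]
which forces it to control the $2a$-th \emph{negative} moment of the Gamma variable $S$ (this is where the hypothesis $a<(n+1)/2$ is actually used), and to bound $\E Z_{(r)}^{2a}$ via a separate lemma that solves the Balakrishnan recursion for exponential order-statistic moments. Your use of the Gamma--Dirichlet factorisation gives the exact identity $\E\tld U_{(r)}^a=\E\xi_{(r)}^a/\E S^a$, so you only need the trivial lower bound $\E S^a\ge n^a$ and never touch negative moments; in particular your argument does not intrinsically require $a<(n+1)/2$ at all, and the separate recursion lemma is replaced by R\'enyi's representation plus the one-line combinatorial bound $\E\xi_{(r)}^a\le a!\mu_r^a$. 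The only place you do a little extra work is the uniform-in-$r$ inequality $\mu_r\le 2\log r$, which the paper sidesteps by absorbing constants into the big-$O$ once it has the nested-sum formula. Both arguments deliver the same bounds; yours is more self-contained and avoids the moment-doubling loss from Cauchy--Schwarz.
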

\begin{proof}
By \citet[Chapter 21]{shorack_empirical_2009}, $\tld U_{(r)}$ is distributed as $\fr{Z_{r:n+1}}{\sum_{i\in[n+1]}Z_i}$, for $(Z_1,Z_2,\ldots,Z_{n+1})\overset{\text{i.i.d.}}{\sim}\text{Exponential}(1)$ with order statistics $Z_{1:n+1}\leq Z_{2:n+1}\leq\ldots\leq Z_{n+1:n+1}$. The Cauchy--Schwarz inequality gives 
$$\E \tld U_{(r)}^a\leq\sqrt{\E Z_{r:n+1}^{2a}\E \left(\sum_{i\in[n+1]}Z_i\right)^{-2a}}.$$
Here, $\E Z_{1:n+1}^{2a}=\bigO{n^{-2a}}$ and $\E Z_{r:n+1}^{2a}=\bigO{\left(\log r\right)^{2a}}$ for $r\geq 2$ and for finite fixed integer $a\geq1$ by \Cref{lem:exponential_orderstats_moments}. The sum of $n+1$ i.i.d. $\text{Exponential}(1)$ variates follows a $\text{Gamma}(n+1,1)$ distribution. Thus, $(\sum_{i\in[n+1]}Z_i)^{-1}$ follows an $\text{Inverse-Gamma}(n+1,1)$ distribution, whose $2a$-th moment is equal to 
$\fr{(n-2a)!}{n!}\lesssim n^{-2a}$
for $2a<n+1$, where the last inequality follows from $\sqrt{2\pi}n^{n+1/2}e^{-n}\leq n!\leq e n^{n+1/2}e^{-n}$ \citep{robbins_remark_1955}, because $n-2a+1/2\geq 0$ for a positive integer $a$.
\end{proof}

\begin{lemma}[Moments of Exponential$(1)$ Order Statistics]
\label{lem:exponential_orderstats_moments}
Let $(Z_1,Z_2,\ldots,Z_{n})\overset{\text{i.i.d.}}{\sim}$ Exponential(1) with order statistics $Z_{1:n}\leq Z_{2:n+1}\leq\ldots\leq Z_{n:n}$. Then for finite fixed integer $k\geq 1$, we have for all $n\geq2$,
\begin{align}
\E Z_{r:n}^k=k!\sum_{t_1=1}^{r}\fr{1}{n+1-t_1}\sum_{t_2=1}^{t_1}\fr{1}{n+1-t_2}\cdots \sum_{t_{k-1}=1}^{t_{k-2}}\fr{1}{n+1-t_{k-1}}
 \sum_{t_k=1}^{t_{k-1}}\fr{1}{n+1-t_k} \label{eq:expordermoment}
\end{align}
for all $r=1,2,\ldots, n$. The right side in \eqref{eq:expordermoment} is $\bigO{(\log r)^k}$ for $r\geq 2$, and $\E Z_{1:n}^k=\bigO{n^{-k}}$.
\end{lemma}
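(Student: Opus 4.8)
The plan is to reduce both the exact formula and the two bounds to the Rényi representation of exponential order statistics followed by a one-step moment recursion. By \citet[Chapter 21]{shorack_empirical_2009} the normalized spacings $(n+1-i)(Z_{i:n}-Z_{i-1:n})$, $i\in[n]$, with $Z_{0:n}\ceq 0$, are i.i.d.\ $\text{Exponential}(1)$. Writing $a_i\ceq\fr{1}{n+1-i}$, this gives the distributional identity $Z_{r:n}\overset{d}{=}S_r\ceq\sum_{i=1}^r W_i$, where $W_1,\ldots,W_r$ are independent with $W_i\sim\text{Exponential}(n+1-i)$, so that $\E W_i^j=j!\,a_i^j$ for every integer $j\geq0$. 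Hence $\E Z_{r:n}^k=\E S_r^k$, and everything reduces to computing and bounding $\E S_r^k$.

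First I would establish, for $r\in[n]$ and $k\geq1$, the recursion
\[
\E S_r^k=\fr{k}{n+1-r}\,\E S_r^{k-1}+\E S_{r-1}^k,
\]
with the conventions $S_0\ceq0$ and $\E S_r^0\ceq1$. Since $S_r=S_{r-1}+W_r$ with $S_{r-1}$ independent of $W_r$, the binomial theorem gives $\E S_r^k-\E S_{r-1}^k=\sum_{j=1}^k\binom{k}{j}j!\,a_r^j\,\E S_{r-1}^{k-j}$, whereas $\fr{k}{n+1-r}\E S_r^{k-1}=k a_r\sum_{j=0}^{k-1}\binom{k-1}{j}j!\,a_r^j\,\E S_{r-1}^{k-1-j}$; re-indexing the latter sum by $j\mapsto j-1$ and invoking the elementary identity $k\binom{k-1}{j-1}(j-1)!=\binom{k}{j}j!=\fr{k!}{(k-j)!}$ shows the two right-hand sides agree. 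Telescoping the recursion in $r$ (the boundary term vanishes because $\E S_0^k=0$) yields $\E S_r^k=\sum_{t_1=1}^r\fr{k}{n+1-t_1}\,\E S_{t_1}^{k-1}$, and iterating this identity $k$ times, terminating with $\E S_{t_k}^0=1$, produces exactly $\E Z_{r:n}^k=k!\sum_{t_1=1}^r\fr{1}{n+1-t_1}\sum_{t_2=1}^{t_1}\fr{1}{n+1-t_2}\cdots\sum_{t_k=1}^{t_{k-1}}\fr{1}{n+1-t_k}$, the claimed formula, the successive prefactors $k,k-1,\ldots,1$ multiplying out to $k!$.

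For the asymptotics, denote by $P_{r,k}$ the nested sum above; its range is the set of $k$-tuples with $r\geq t_1\geq\cdots\geq t_k\geq1$ and its summand is $\prod_{i=1}^k a_{t_i}$ with $a_t\in(0,1]$. Every such weakly ordered tuple occurs at least once among the ordered tuples appearing in the expansion of $\bigl(\sum_{t=1}^r a_t\bigr)^k$, so $P_{r,k}\leq\bigl(\sum_{t=1}^r a_t\bigr)^k$. Now $\sum_{t=1}^r a_t=\sum_{j=n-r+1}^n\fr1j=H_n-H_{n-r}$ with $H_m\ceq\sum_{j=1}^m\fr1j$, and the standard estimates $\log(m+1)\leq H_m\leq1+\log m$, together with $(r-1)(n-r)\geq0$ (equivalently $\fr{n}{n-r+1}\leq r$), give $H_n-H_{n-r}\leq1+\log\fr{n}{n-r+1}\leq1+\log r$ for all $1\leq r\leq n$. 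Therefore $\E Z_{r:n}^k=k!\,P_{r,k}\leq k!\,(1+\log r)^k=\bigO{(\log r)^k}$ for $r\geq2$ (using $1+\log r\leq(1+1/\log2)\log r$ there). For $r=1$ the nested sum degenerates, the only admissible tuple being $t_1=\cdots=t_k=1$, so $\E Z_{1:n}^k=k!\,a_1^k=k!/n^k=\bigO{n^{-k}}$, consistent with $Z_{1:n}\sim\text{Exponential}(n)$.

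I do not anticipate a genuine obstacle; the only points demanding care are bookkeeping ones, namely verifying the binomial identity, carrying the telescoping through all $k$ levels in the correct index order, and the harmonic-number estimate $H_n-H_{n-r}\leq1+\log r$. If one preferred to avoid the recursion, one could instead expand $\E\bigl(\sum_i W_i\bigr)^k$ directly over $k$-tuples of indices and regroup by multiplicity, but the same multinomial combinatorics then resurfaces, so the recursion route seems cleanest.
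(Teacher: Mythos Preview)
Your proof is correct and follows essentially the same route as the paper. The paper obtains the nested-sum formula by citing the recursion in \citet{balakrishnan_2_1998}, whereas you derive it self-containedly from the R\'enyi representation $Z_{r:n}\overset{d}{=}\sum_{i=1}^r W_i$ and the one-step recursion $\E S_r^k=\fr{k}{n+1-r}\E S_r^{k-1}+\E S_{r-1}^k$; both lead to the same iterated telescoping. For the asymptotics the two arguments coincide: bound the nested ordered sum by the full product $\bigl(\sum_{t=1}^r\fr{1}{n+1-t}\bigr)^k$ and then control this partial harmonic sum by $\log r$. Your treatment is slightly more explicit, supplying the harmonic-number estimate $H_n-H_{n-r}\leq 1+\log r$ via $(r-1)(n-r)\geq0$, where the paper simply asserts the $\bigO{\log r}$ order.
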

\begin{proof}
The right side of \eqref{eq:expordermoment} and $\E Z_{1:n}^k=\bigO{n^{-k}}$ are obtained by solving the recursion in \citet[Theorems 1 and 2]{balakrishnan_2_1998}. The innermost sum in \eqref{eq:expordermoment} satisfies 
$\sum_{t_k=1}^{t_{k-1}}\fr{1}{n+1-t_k}\leq\sum_{j=1}^{r}\fr{1}{n+1-j}$ as $t_k\leq r$. Because every fraction in \eqref{eq:expordermoment} is positive, we can upper bound the right side of \eqref{eq:expordermoment} by 
$$k!\left(\sum_{j=1}^{r}\fr{1}{n+1-j}\right)\left(\sum_{t_1=1}^{r}\fr{1}{n+1-t_1}\sum_{t_2=1}^{t_1}\fr{1}{n+1-t_2}\cdots \sum_{t_{k-1}=1}^{t_{k-2}}\fr{1}{n+1-t_{k-1}}\right).$$ Apply the same bound for the remaining $k-1$ sums noting that $1\leq t_1\leq t_1\leq\ldots\leq t_{k-1}\leq r$, to obtain the bound $k!\left(\sum_{j=1}^{r}\fr{1}{n+1-j}\right)^k$ on  \eqref{eq:expordermoment}. The proof is complete as $\sum_{j=1}^{r}\fr{1}{n+1-j}$ is $\bigO{\log r}$ for $r\in[n]$ as $n\to\infty$.
\end{proof}

 \begin{lemma}[Conditional Martingale Central Limit Theorem]
\label{lem:conditional_mclt}
Let $(\Omega_n,\mathcal{F}_n, \bbP_n)$ be a sequence of probability spaces. Let $\xi_{n1},\xi_{n2},\ldots,\xi_{nn}:\Omega_n\to \real$ be martingale differences with respect to sub-$\sigma$-algebras $\filt_{n1}\subset \filt_{n2}\subset\ldots\subset \filt_{nn}\subset\filt_n$. Let $\filt_{n0}\subset\filt_{n1}$ be a sub-$\sigma$-algebra. For $k=1,2,\ldots,n$, let $\sigma_{nk}^2\ceq \Ebcs{n}{\xi_{nk}^2}{\filt_{n,k-1}}$. If there exists a finite constant $\sigma>0$ such that
\begin{align}
&\probcs{n}{\abs{\sum_{k=1}^n \sigma_{nk}^2-\sigma^2}>\epsilon}{\filt_{n0}}\os{\bbP_n}{\longrightarrow}0\quad\text{for all constants $\epsilon>0$ and}\label{eq:mclt_var} \\
&\sum_{k=1}^n \Ebcs{n}{\xi_{nk}^2\indic{|\xi_{nk}|\geq\eta}}{\filt_{n0}}\os{\bbP_n}{\longrightarrow}0 \quad\text{for all constants $\eta>0$,}\label{eq:mclt_lindeberg}
\end{align}
then $\probcs{n}{\sigma^{-1}\sum_{k=1}^n \xi_{nk}\leq x}{\filt_{n0}}\os{\bbP_n}{\longrightarrow}\Phi(x)$ as $n\to\infty$ for all $x\in\real$, where $\Phi$ is the standard normal distribution function.
\end{lemma}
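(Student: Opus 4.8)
The plan is to reduce everything to the convergence in $\bbP_n$-probability (written $\convprob$) of the \emph{conditional characteristic functions}
\[
\psi_n(t)\ceq\Ebcs{n}{e^{\mathrm{i}tT_n}}{\filt_{n0}}\convprob e^{-\sigma^2t^2/2}\qquad\text{for each fixed }t\in\real,\quad T_n\ceq\textstyle\sum_{k=1}^n\xi_{nk},
\]
and then to pass from this to the stated convergence of the conditional distribution functions via a conditional form of L\'evy's continuity theorem. For the latter, writing $\mu_n$ for the random conditional law of $T_n$ given $\filt_{n0}$, I would first strengthen $\widehat\mu_n(t)\convprob e^{-\sigma^2t^2/2}$ (pointwise in $t$) to $\sup_{|t|\le T}|\widehat\mu_n(t)-e^{-\sigma^2t^2/2}|\convprob0$ for every $T<\infty$ (uniform equicontinuity of the $\widehat\mu_n$, which the localisation below supplies), and then use that the set of probability laws whose characteristic function is uniformly within $\varepsilon$ of that of $\mathcal N(0,\sigma^2)$ on $[-1/\varepsilon,1/\varepsilon]$ is tight and contracts, in the L\'evy metric, to $\{\mathcal N(0,\sigma^2)\}$ as $\varepsilon\downarrow0$; since $\Phi$ is continuous everywhere this gives $\mu_n((-\infty,\sigma x])\convprob\Phi(x)$ for all $x$, which is the assertion. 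I would spell this step out by hand, because the $\filt_{n0}$ are allowed to grow with $n$, so off-the-shelf fixed-$\sigma$-field stable-convergence theorems do not apply verbatim.

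Two preliminary reductions for the characteristic function. From \eqref{eq:mclt_lindeberg} and Markov's inequality, $\probcs{n}{\max_k|\xi_{nk}|>\eta}{\filt_{n0}}\le\eta^{-2}\sum_k\Ebcs{n}{\xi_{nk}^2\indic{|\xi_{nk}|\ge\eta}}{\filt_{n0}}\convprob0$, so $\max_k|\xi_{nk}|\convprob0$; and, by the tower property and non-negativity of the summands, \eqref{eq:mclt_var} and \eqref{eq:mclt_lindeberg} remain valid with $\filt_{n,k-1}$ in place of $\filt_{n0}$. Next I would localise: replace $\xi_{nk}$ by the recentred, stopped truncation $\check\xi_{nk}\ceq(\xi_{nk}\indic{|\xi_{nk}|\le1}-\Ebcs{n}{\xi_{nk}\indic{|\xi_{nk}|\le1}}{\filt_{n,k-1}})\indic{k\le\tau_n}$, with the stopping time $\tau_n\ceq\min\{m:\sum_{k\le m}\Ebcs{n}{\xi_{nk}^2\indic{|\xi_{nk}|\le1}}{\filt_{n,k-1}}>\sigma^2+1\}$. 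Then the $\check\xi_{nk}$ are martingale differences, $|\check\xi_{nk}|\le2$, and $\sum_k\Ebcs{n}{\check\xi_{nk}^2}{\filt_{n,k-1}}\le\sigma^2+1$ identically, while \eqref{eq:mclt_var}--\eqref{eq:mclt_lindeberg} force the stopped--truncated sum to satisfy $\check T_n=T_n+\smallOP{1}$; hence it suffices to prove $\Ebcs{n}{e^{\mathrm{i}t\check T_n}}{\filt_{n0}}\convprob e^{-\sigma^2t^2/2}$, and I drop the checks below.

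Now McLeish's device: on $\{\max_k|t\xi_{nk}|\le c\}$, which now has conditional probability tending to one, $e^{\mathrm{i}t\xi_{nk}}=(1+\mathrm{i}t\xi_{nk})\exp(-\tfrac{t^2}{2}\xi_{nk}^2+r(t\xi_{nk}))$ with $|r(x)|\le C|x|^3$, hence $e^{\mathrm{i}tT_n}=P_n\exp(-\tfrac{t^2}{2}S_n+R_n)$ where $P_n\ceq\prod_{k\le n}(1+\mathrm{i}t\xi_{nk})$, $S_n\ceq\sum_k\xi_{nk}^2$, $R_n\ceq\sum_k r(t\xi_{nk})$. The structural point is that $\prod_{k\le m}(1+\mathrm{i}t\xi_{nk})$ is a complex martingale for $(\filt_{nk})$ (because $\Ebcs{n}{\xi_{nk}}{\filt_{n,k-1}}=0$ and $\filt_{n0}\subset\filt_{n,k-1}$), so iterating the tower property down to $\filt_{n0}$ yields $\Ebcs{n}{P_n}{\filt_{n0}}=1$; therefore
\[
\psi_n(t)=e^{-\sigma^2t^2/2}+\Ebcs{n}{P_n\bigl(e^{-\frac{t^2}{2}S_n+R_n}-e^{-\sigma^2t^2/2}\bigr)}{\filt_{n0}},
\]
and it remains to show the remainder is $\smallOP{1}$.

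Three ingredients handle the remainder, all coming from the hypotheses after the reductions above: (a) $R_n\convprob0$, since $|R_n|\le C|t|^3(\max_k|\xi_{nk}|)S_n$ with $\max_k|\xi_{nk}|\convprob0$ and $S_n$ tight; (b) $S_n\convprob\sigma^2$, since $\sum_k\sigma_{nk}^2\convprob\sigma^2$ by \eqref{eq:mclt_var} while $S_n-\sum_k\sigma_{nk}^2=\sum_k(\xi_{nk}^2-\sigma_{nk}^2)$ is a sum of martingale differences killed by a double truncation (at height $\eta$, then $\eta\downarrow0$) through conditional $L^2$-orthogonality and \eqref{eq:mclt_lindeberg}; and (c) conditional uniform integrability of $\{P_n\}$, which is the main obstacle. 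For (c) I would use that $|P_n|^2=\prod_k(1+t^2\xi_{nk}^2)$ and that $\prod_{k\le m}(1+t^2\xi_{nk}^2)\exp(-t^2\sum_{k\le m}\Ebcs{n}{\xi_{nk}^2}{\filt_{n,k-1}})$ is a non-negative supermartingale (because $(1+x)e^{-x}\le1$), giving $\Ebcs{n}{|P_n|^2}{\filt_{n0}}\le e^{t^2(\sigma^2+1)}$ precisely thanks to the bound $\sum_k\Ebcs{n}{\xi_{nk}^2}{\filt_{n,k-1}}\le\sigma^2+1$ secured by the localisation. With (a)--(b) the bracket $e^{-\frac{t^2}{2}S_n+R_n}-e^{-\sigma^2t^2/2}$ is bounded and $\smallOP{1}$, so on the event where $\max_k|t\xi_{nk}|\le c$ and $S_n\le\sigma^2+1$ (on which $|P_n|$ is bounded) its contribution to the remainder is $\smallOP{1}$, while on the complement, which has conditional probability $\smallOP{1}$, Cauchy--Schwarz together with the $L^2$-bound from (c) again gives $\smallOP{1}$. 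This closes the argument; the only places needing genuine care are the localisation feeding (c) and the conditional continuity theorem of the first paragraph.
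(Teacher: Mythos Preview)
Your argument is correct and essentially reconstructs a McLeish-type proof of the martingale central limit theorem, carefully carried out at the level of conditional characteristic functions given $\filt_{n0}$. The paper's own proof is a single sentence: it invokes Theorem~35.12 in Billingsley's \emph{Probability and Measure} and asserts that the argument there goes through verbatim once one conditions on $\filt_{n0}$ at every step. So the two proofs differ in spirit rather than substance: the paper outsources the analysis to a textbook and leaves the reader to check that conditioning survives each step, while you rebuild the proof from first principles (localisation by truncation and stopping, the product martingale $P_n=\prod_k(1+\mathrm{i}t\xi_{nk})$, the supermartingale bound for $|P_n|^2$, and the conditional L\'evy continuity argument). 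Your route buys transparency --- in particular, your explicit handling of the growing $\filt_{n0}$ via the random-measure formulation and the L\'evy-metric argument is a genuine point that the one-line citation glosses over --- at the cost of length. Billingsley's proof of Theorem~35.12 is itself a characteristic-function argument close to McLeish's, so what you have written is, in effect, the unpacked version of what the paper cites, adapted to the conditional setting.
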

\begin{proof}
Follows from \citet[Theorem 35.12]{billingsley_probability_1995} by conditioning on $\filt_{n0}$ throughout.
 \end{proof}

\begin{proof}[Proof of \Cref{prop:consistent_variance}]
\emph{Existence of $A_n, \hat A_n$}.\, We show that $A_n ,\hat A_n$ are well-defined with probability tending to one. For $A_n$, this happens if and only if 
$$\prob{\uphatth+a_n<\bphatth-a_n}=\prob{2a_n<\bphatth-\uphatth}\to 1.$$ 
If $\uphatth=\underline p +\smallOP{1}$ and $\bphatth=\ba p +\smallOP{1}$, the probability is $\prob{2a_n<\ba p - \underline p + \smallOP{1}}=\prob{2< a_n^{-1}(\ba p - \underline p)+\smallOP{a_n^{-1}}}$, which goes to one as $\ba p>\underline p$ and $a_n\downarrow 0$. To show that these conditions hold, define $T_n(x)\ceq \hatth^\intercal x$ and $T(x)\ceq \theta_0^\intercal x$, so by definition $\ba p = \sup_{x\in\mathcal{X}}g(T(x))=g(\sup_{x\in\mathcal{X}} T(x))$ and $\bphatth = \sup_{x\in\mathcal{X}}g(T_n(x))=g(\sup_{x\in\mathcal{X}} T_n(x))$ because $g$ is increasing by \Cref{ass:singleindex_propscore_smooth_outcome}. Since $g$ is continuous by \Cref{ass:singleindex_propscore_smooth_outcome}, it suffices by the continuous mapping theorem to show $\sup_{x\in\mathcal{X}} T_n(x)\convprob \sup_{x\in\mathcal{X}} T(x)$. Because $\mathcal{X}$ is bounded and $\hatth\convprob \theta_0$ by \Cref{ass:theta_estimator}, $\sup_{x\in\mathcal{X}}|T_n(x)-T(x)|\lesssim \normu{\hatth-\theta_0}=\smallOP{1}$, so that $\bphatth=\ba p+\smallOP{1}$. Similar arguments yield $\uphatth=\underline p +\smallOP{1}$. 

For $\hat A_n$, the desired result follows from that for $A_n$ above, and that $\min_{i\in[n]}g(\hatth^\intercal X_i)=\uphatth+\smallOP{1}$ and $\max_{i\in[n]}g(\hatth^\intercal X_i)=\bphatth+\smallOP{1}$. Because $F_{\hatth}^{-1}$, the inverse of $F_{\hatth}(p)=p_1F_{1,\hatth}(p)+(1-p_1)F_{0,\hatth}(p)$ which is the distribution of $(\pi(X,\hatth)\mid \hatth)$ under Assumption \ref{ass:theta_estimator}, is strictly increasing by Assumption \ref{ass:propscoredist_estimated}, $(\min_{i\in[n]} \pi(X_i,\hatth)\mid \hatth)$ is distributed as $F_{\hatth}^{-1}(U_{(1)})$, where $U_{(1)}$ is the sample minimum of $((U_1,U_2,\ldots, U_n)\mid \hatth)\overset{\text{i.i.d.}}{\sim}\text{Uniform}(0,1)$. Then $(\min_{i\in[n]} \pi(X_i,\hatth)- \uphatth\mid\hatth)$ is distributed as $F_{\hatth}^{-1}(U_{(1)})-F_{\hatth}^{-1}(F_{\hatth}(\uphatth))=F_{\hatth}^{-1}(U_{(1)})-F_{\hatth}^{-1}(0)$ given $\hatth.$ By Assumption \ref{ass:propscoredist_estimated}, $F_{\hatth}^{-1}$ is Lipschitz with constant $\supnorm{(F_{\hatth}^{-1})'}$ with $(F_{\hatth}^{-1})'(u)=\fr{1}{f_{\hatth}(F_{\hatth}^{-1}(u))}$ finite for $\inf_{p\in \uphatth,\bphatth}f_{\hatth}(p)>0$ by Assumption \ref{ass:propscoredist_estimated} for $\hatth\in\ntheta$. Thus, $F_{\hatth}^{-1}(U_{(1)})-F_{\hatth}^{-1}(0)\lesssim U_{(1)}$. Here, $\E U_{(1)}$ goes to zero by the proof of \Cref{prop:maxmindistorder}. Hence, by \Cref{ass:singleindex_propscore_smooth_outcome}, $\min_{i\in[n]}g(\hatth^\intercal X_i)=\uphatth+\smallOP{1}$ and $\max_{i\in[n]}g(\hatth^\intercal X_i)=\bphatth+\smallOP{1}$ similarly. In the following, we prove the consistency of the variance component estimators for $V_{\hat\pi}$; similar arguments give the result for $V_{\mathrm{t},\hat\pi}$. We show below that $\ntruncate/n\convprob1$ (since \eqref{eq:indicator_order} is $\smallOP{1}$). Therefore, in the following, we prove the consistency of the estimators $\hat V_\tau,\hat V_{\taut}, \hat V_{\sigma,\pi},\hat V_{\mathrm{t},\sigma,\pi}, \hat q_d$ and $\hat q_{\mathrm{t},d}$ normalised by $n$ rather than $\ntruncate$.

\emph{Consistency of  $\hat V_\tau$.} By \Cref{thm:asymnorm_estimated_propscore_ate} and the continuous mapping theorem, $(\thatpihat)^2\convprob \tau^2$.  For short, put $\pi_i\ceq g(\theta_0^\intercal X_i)$, $\hat\pi_i\ceq g(\hatth^\intercal X_i)$ and $\indichat{i}\ceq \indic{g(\hatth^\intercal X_i)\in\hat A_n}$. The first term in \eqref{eq:hatVtau} is
\begin{align}
\nsumn{i}[\mu^1(\hatth,\hat\pi_i)-\mu^0(\hatth,\hat\pi_i)]^2\indichat{i}  \label{eq:hatVtau3} \\
+\nsumn{i} [\hat\mu^1(\hatth,\hat\pi_i)-\hat\mu^0(\hatth,\hat\pi_i)-(\mu^1(\hatth,\hat\pi_i)-\mu^0(\hatth,\hat\pi_i))]^2\indichat{i} \label{eq:hatVtau1}\\
+ \fr{2}{n}\sum_{i\in[n]}[\hat\mu^1(\hatth,\hat\pi_i)-\hat\mu^0(\hatth,\hat\pi_i)-(\mu^1(\hatth,\hat\pi_i)-\mu^0(\hatth,\hat\pi_i))][\mu^1(\hatth,\hat\pi_i)-\mu^0(\hatth,\hat\pi_i)]\indichat{i}. \label{eq:hatVtau2}
\end{align}
Here, \eqref{eq:hatVtau3} converges to $\E(\mu^1(\theta_0,\pi_i)-\mu^0(\theta_0,\pi_i))^2$. To see this, first note that under \Cref{ass:theta_estimator},
\begin{align*}
\abs{\nsumn{i}[\mu^1(\hatth,\hat\pi_i)-\mu^0(\hatth,\hat\pi_i)]^2\indichat{i}-\nsumn{i}[\mu^1(\theta_0,\pi_i)-\mu^0(\theta_0,\pi_i)]^2\indichat{i}}\convprob 0,
\end{align*}
which follows from a mean-value expansion of $[\mu^1(\hatth,\hat\pi_i)-\mu^0(\hatth,\hat\pi_i)]^2$ in $\hatth$, similarly to the treatment of \eqref{eq:tauhat_thetahat_decomp1_2} in the proof of  \Cref{thm:asymnorm_estimated_propscore_ate}. Second, as the $\mu^d(\theta,\cdot)$, $\theta\in\ntheta$, are bounded by \Cref{ass:lip_regression}, 
\begin{align}
\abs{\nsumn{i}[\mu^1(\theta_0,\pi_i)-\mu^0(\theta_0,\pi_i)]^2\indichat{i}-\nsumn{i}[\mu^1(\theta_0,\pi_i)-\mu^0(\theta_0,\pi_i)]^2} \label{eq:mu_indichat}
\end{align}
is of the order
\begin{align}
\nsumn{i}(1-\indichat{i})=&\,\nsumn{i}\left(\indic{g(\hatth^\intercal X_i)\notin\hat A_n}-\Ebc{\indic{g(\hatth^\intercal X_i)\notin\hat A_n}}{\hatth}\right) \nonumber \\
&+\nsumn{i}\Ebc{\indic{g(\hatth^\intercal X_i)\notin\hat A_n}}{\hatth} \label{eq:indicator_order}.
\end{align}
with probability tending to one. Here the first term has mean zero and variance bounded by $1/n$, so it converges to zero in the first mean, and then so in probability. By \Cref{ass:theta_estimator}, the second term of \eqref{eq:indicator_order} is 
\begin{align}
\probc{g(\hatth^\intercal X_i)\notin \hat A_n}{\hatth}=&\probc{g(\hatth^\intercal X_i)< \min_{i\in[n]}g(\hatth^\intercal X_i)+a_n}{\hatth} \label{eq:indicator_order_1}\\
&+\probc{\max_{i\in[n]}g(\hatth^\intercal X_i))-a_n<g(\hatth^\intercal X_i)}{\hatth} \label{eq:indicator_order_2}
\end{align}
for some $i\in[n]$. Let $\underline G_{\hatth}\ceq \min_{i\in[n]}g(\hatth^\intercal X_i)$. To bound \eqref{eq:indicator_order_1}, we have, by \cite{shanmugam_characterizations_1988},
\begin{align}
\probc{g(\hatth^\intercal X_i)\leq \underline G_{\hatth}+a_n}{\hatth, \underline G_{\hatth}} = \fr{1}{n}+\fr{n-1}{n}\fr{F_{\hatth}(\underline G_{\hatth}+a_n)-F_{\hatth}(\underline G_{\hatth})}{1-F_{\hatth}(\underline G_{\hatth})} \label{eq:indicator_order_1_bound}
\end{align}
under \Cref{ass:theta_estimator,ass:singleindex_propscore_smooth_outcome}, where $F_{\hatth}\ceq p_1F_{\hatth,1}+(1-p_1)F_{\hatth,0}$ is the distribution function of $g(\hatth^\intercal X)$ given $\hatth$. By \Cref{ass:propscoredist_estimated}, $F_{\hatth}$ is continuous, and by arguments on $\hat A_n$ above $\underline G_{\hatth}=\uphatth+\smallOP{1}$. Then $a_n\downarrow 0$ implies that \eqref{eq:indicator_order_1_bound} is $\smallOP{1}$, which in turn implies that \eqref{eq:indicator_order_1}, being bounded by one, is also $\smallOP{1}$. Term \eqref{eq:indicator_order_2} is $\smallOP{1}$ by similar arguments, noting that $\set{\max_{i\in[n]}g(\hatth^\intercal X_i))-a_n<g(\hatth^\intercal X_i)}=\set{\min_{i\in[n]}-g(\hatth^\intercal X_i))+a_n> -g(\hatth^\intercal X_i)}$. Thus, \eqref{eq:indicator_order} is $\smallOP{1}$. Conclude that \eqref{eq:hatVtau3} converges in probability to $V_\tau+\tau^2$. Write \eqref{eq:hatVtau1} as
\begin{align*}
\nsumn{i} [\hat\mu^1(\hatth,\hat\pi_i)-\mu^1(\hatth,\hat\pi_i)]^2\indichat{i} 
+\fr{2}{n}\sum_{i\in[n]}(\hat\mu^1(\hatth,\hat\pi_i)-\mu^1(\hatth,\hat\pi_i))(\hat\mu^0(\hatth,\hat\pi_i)-\mu^0(\hatth,\hat\pi_i))\indichat{i} \\
+\nsumn{i} [\hat\mu^0(\hatth,\hat\pi_i)-\mu^0(\hatth,\hat\pi_i)]^2\indichat{i}.
\end{align*}
As \Cref{ass:covar_outcome_distribution}\ref{ass:covar_outcome_distribution:outcome} bounds both $\hat\mu^d$ and $\mu^d$, $\sup_{p\in A_n}|\hat\mu^d(\hatth,p)-\mu^d(\hatth,p)|\convprob0$ implies that all three terms in the last display are $\smallOP{1}$ (note that $\hat A_n\subset A_n$). This convergence can be established along the same lines as that of other estimators, which are detailed below. Similar arguments show that \eqref{eq:hatVtau2} is $\smallOP{1}$. Conclude that $\hat V_\tau\convprob V_\tau$.

\emph{Consistency of $\hat V_{\sigma,\pi}$.} First, $0<g(\hatth^\intercal x)<1$ for all $x$ in compact $\mathcal{X}$ and for all $\hatth\in\ntheta$. Then under \Cref{ass:singleindex_propscore_smooth_outcome}, a mean-value expansion and $\hatth\convprob\theta_0$, implied by \Cref{ass:theta_estimator}, gives that $\sup_{x\in\mathcal{X}}|1/g(\hatth^\intercal x)-1/g(\theta_0^\intercal x)|=\smallOP{1}$ and similarly for $1/(1-g(\hatth^\intercal x))$. Second, the Lipschitz condition in \Cref{ass:lip_conditional_var}\ref{ass:lip_conditional_var:lip} implies 
\begin{align*}
\abs{\nsumn{i}\sigma_d^2(\hatth,\hat\pi_i)\indichat{i}-\nsumn{i}\sigma_d^2(\theta_0,\pi_i)\indichat{i}}\convprob 0,
\end{align*}
and, as in \eqref{eq:mu_indichat} above, we also have
\begin{align*}
\abs{\nsumn{i}\sigma_d^2(\theta_0,\pi_i)\indichat{i}-\nsumn{i}\sigma_d^2(\theta_0,\pi_i)}\convprob 0.
\end{align*}
Thus, if $\sup_{p\in A_n}|\hat\sigma_d^2(\hatth,p)-\sigma_d^2(\hatth,p)|\convprob0$, then the law of large numbers gives that $\hat V_{\sigma,\pi}\convprob V_{\sigma,\pi}$. This holds if both  
$\sup_{p\in A_n}|\hat\mu^d(\hatth,p)-\mu^d(\hatth,p)|\convprob0$ and $\sup_{p\in A_n}|\hat\mu_2^d(\hatth,p)-\mu_2^d(\hatth,p)|\convprob0$ since the former implies $\sup_{p\in A_n}|(\hat\mu^d(\hatth,p))^2-(\mu^d(\hatth,p))^2|\convprob0$ under \Cref{ass:covar_outcome_distribution}\ref{ass:covar_outcome_distribution:outcome}. See below for a proof of such convergence (e.g. establishing \eqref{eq:muhat}).

\emph{Consistency of $\hat q_d$.} By definition of $q_d$, the conditions
\begin{align}
\normu{\nsumn{i} \Lambda^d(\hatth, X_i)- \nsumn{i} \Lambda^d(\hatth, X_i)\indichat{i}} \convprob 0 \label{eq:qhatindicator} \\
\sup_{p\in A_n}\abs{\widehat{\left(\fr{\partial \mu^d}{\partial p}\right)}(\hatth,p) - \fr{\partial \mu^d}{\partial p}(\hatth,p)
}\convprob 0 \label{eq:derivhat_p} \\
\sup_{p\in A_n}\abs{\widehat{\left(\fr{\partial \mu^d}{\partial \theta_k}\right)}(\hatth,p) - \fr{\partial \mu^d}{\partial \theta_k}(\hatth,p)
}\convprob 0\quad \text{for all }k=1,2,\ldots,K, \label{eq:derivhat_theta}
\end{align}
together with \Cref{ass:covar_outcome_distribution,ass:singleindex_propscore_smooth_outcome}, implying  the boundedness of $\mathcal{X}$ and $g'$, ensure $\hat q_d\convprob q_d$. \Cref{ass:differentiability_regression}, \ref{ass:singleindex_propscore_smooth_outcome}\ref{ass:singleindex_propscore_smooth_outcome:propscore}, and \Cref{ass:covar_outcome_distribution} imply that $\Lambda^d$ is bounded uniformly. Therefore, \eqref{eq:qhatindicator} is satisfied, because the arguments treating $\indichat{i}$ in the case of $\hat V_\tau$ above (e.g. \eqref{eq:mu_indichat}) apply.

In the following, we show that \eqref{eq:derivhat_p} and \eqref{eq:derivhat_theta} hold, wherein we also give a detailed proof of the uniform consistency of the nonparametric estimators assumed above. For simplicity of exposition, we only give the proof for the (derivatives of) $\mu(\theta,p)\ceq \Ebc{Y}{g(\theta^\intercal X)=p}=\Ebc{Y}{\theta^\intercal X=\ginv(p)}$ and $\mu_2(\theta,p)\ceq \Ebc{Y^2}{g(\theta^\intercal X)=p}$. The proof when we also condition on $D=d$ follows along the same lines. To this end, let $h(\theta,p)\ceq p_1 h_{1}(\theta, p)+(1-p_1)h_0(\theta,p)$ be the density of $\pi(X,\theta)$, $q_\mu(\theta,p)\ceq \mu(\theta,p)h(\theta,p)$, $q_{\mu_2}(\theta,p)\ceq \mu_2(\theta,p)h(\theta,p)$ and their corresponding estimators be obtained by setting $\indic{D_j=d}\ceq 1$ for all $j\in[n]$ in the formulae for $\hat h_d$, $\hat q_{\mu,d}$, $\hat q_{\mu_2,d}$, respectively. For short, we also let $h'(\theta,p)\ceq (\partial/\partial p)h(\theta,p)$, $\hat h'(\theta,p)\ceq (\partial/\partial p)\hat h(\theta,p)$ and likewise for $q_\mu', \hat q_\mu'$. Furthermore, we let $\widehat{\left(\fr{\partial h}{\partial \theta_k}\right)}$ and $\widehat{\left(\fr{\partial q_\mu}{\partial \theta_k}\right)}$ be obtained by setting $\indic{D_j=d}\ceq 1$ in the formula for $\widehat{\left(\fr{\partial h_d}{\partial \theta_k}\right)}$ and $\widehat{\left(\fr{\partial q_{\mu,d}}{\partial \theta_k}\right)}$, respectively. For short, we put  $\hdotk(\theta,p)\ceq  (\partial/\partial \theta_k)h(\theta,p)$, $\hdotkhat(\theta,p)\ceq \widehat{\left(\fr{\partial h}{\partial \theta_k}\right)}(\theta,p)$ and likewise for $\qdotk(\theta,p), \qdotkhat(\theta,p)$. For a function $r:\Theta\times [0,1]\to\real$, we let $\norm{A_n}{r}\ceq\sup_{p\in A_n}|r(\hatth,p)|$. 

\emph{Condition \eqref{eq:derivhat_p}.} First we show that the numerator of
\begin{align}
\widehat{\left(\fr{\partial \mu}{\partial p}\right)}(\hatth,p)\ceq \fr{\hat q_\mu'(\hatth,p) \hat h(\hatth,p)-\hat q_\mu(\hatth,p)\hat h'(\hatth,p)}{(\hat h(\hatth,p))^2} \label{eq:derivhat_nod_p}
\end{align}
converges to that of 
\begin{align}
\fr{\partial \mu}{\partial p}(\hatth,p)=\fr{q_\mu'(\hatth,p) h(\hatth,p)- q_\mu(\hatth,p)h'(\hatth,p)}{h(\hatth,p)^2}. \label{eq:deriv_nod_p}
\end{align}
Consider
\begin{align*}
\norm{A_n}{\hat q_\mu' \hat h - \hat q_\mu\hat h'-(q_\mu'h-q_\mu h')} \leq &\, \norm{A_n}{\hat q_\mu' \hat h - q_\mu' h} + \norm{A_n}{\hat q_\mu \hat h' - q_\mu h'} \\
\norm{A_n}{\hat q_\mu' \hat h - q_\mu' h}  = &\, \norm{A_n}{(\hat q_\mu' - q_\mu' + q_\mu')(\hat h - h + h) - q_\mu' h} \\
	\leq&\, \norm{A_n}{\hat q'_\mu - q_\mu'}\norm{A_n}{\hat h-h}+\norm{A_n}{\hat q_\mu' - q_\mu'}\norm{A_n}{h} \\
	&+ \norm{A_n}{q'_\mu}\norm{A_n}{\hat h - h} \\
\norm{A_n}{\hat q_\mu \hat h' - q_\mu h'} \leq &\, \norm{A_n}{\hat h' - h'}\norm{A_n}{\hat q_\mu - q_\mu}+\norm{A_n}{\hat h' - h'}\norm{A_n}{q_\mu} \\
	&+ \norm{A_n}{h'}\norm{A_n}{\hat q_\mu - q_\mu}.
\end{align*}
By \Cref{ass:propscoredist_estimated}\ref{ass:propscoredist_estimated:differentiability_p_and_theta}, $\norm{A_n}{h}$ is bounded with probability tending to one as $\hatth\convprob \theta_0$ and combining it with \Cref{ass:differentiability_regression,ass:singleindex_propscore_smooth_outcome}\ref{ass:singleindex_propscore_smooth_outcome:regression}, the same holds for  $\norm{A_n}{h'}$, $\norm{A_n}{q_\mu}=\norm{A_n}{\mu h}$ and $\norm{A_n}{q_\mu'}=\norm{A_n}{\mu' h + \mu h'}$. It follows that if all
\begin{align}
\norm{A_n}{\hat q'_\mu - q_\mu'}&\convprob 0, \label{eq:hatq_prime} \\
\norm{A_n}{\hat h' - h'}&\convprob 0,  \label{eq:hath_prime}\\ 
\norm{A_n}{\hat q_\mu - q_\mu}&\convprob 0, \label{eq:hatq} \\
\norm{A_n}{\hat h - h}&\convprob 0, \label{eq:hath} 
\end{align}
then the numerator of \eqref{eq:derivhat_nod_p} converges to that of \eqref{eq:deriv_nod_p} uniformly in $p\in A_n$. The denominator of \eqref{eq:derivhat_nod_p} satisfies
\begin{align*}
\norm{A_n}{(\hat h)^2-h^2} &= \norm{A_n}{(\hat h-h)(\hat h + h + h - h)} \leq \norm{A_n}{\hat h - h}\left\{\norm{A_n}{\hat h- h}+2\norm{A_n}{h}\right\}.
\end{align*}
By \Cref{ass:propscoredist_estimated}\ref{ass:propscoredist_estimated:differentiability_p_and_theta}, $\norm{A_n}{h}<\infty$ with probability tending to one as $\hatth\convprob\theta_0$, hence \eqref{eq:hath} implies that the denominator of \eqref{eq:derivhat_nod_p} converges to that of \eqref{eq:deriv_nod_p} uniformly in $p\in A_n$.
Thus, both the numerator and the denominator of \eqref{eq:derivhat_nod_p} converges to those of \eqref{eq:deriv_nod_p}. Because $\inf_{p\in[\uphatth,\bphatth]}h(\hatth,p)>0$ for all $\hatth\in\ntheta$, it follows that \eqref{eq:derivhat_nod_p} converges to \eqref{eq:deriv_nod_p} uniformly in $p\in A_n$. In the following, we show that \eqref{eq:hatq_prime}--\eqref{eq:hath} hold.

\emph{Condition \eqref{eq:derivhat_p}, part \eqref{eq:hatq_prime}.}  The proof consists in showing
\begin{align}
\E \Ebc{\sup_{p\in A_n}\abs{\hat q_\mu'(\hatth,p)-\Ebc{\hat q_\mu'(\hatth,p)}{\hatth}}}{\hatth} &\to 0 \quad\text{ and } \label{eq:hatq_prime_1} \\
\sup_{p\in A_n}\abs{\Ebc{\hat q_\mu'(\hatth,p)}{\hatth}-q_\mu'(\hatth,p)} &\convprob 0. \label{eq:hatq_prime_2}
\end{align}
We show \eqref{eq:hatq_prime_1} following \cite{bierens_topics_1994}. For the imaginary unit $i$, let $\psi(t)\ceq \int e^{itx}K(x)\deriv x$, $t\in\real$, be the characteristic function of $K$, which is $\psi(t)=e^{-t^2/2}$ for the Gaussian kernel $K$, so that $K(x)=(2\pi)^{-1}\int  e^{-itx}\psi(t)\deriv t$ by the inversion formula for characteristic functions. Then $K'(x)=(2\pi)^{-1}\int  (-it)e^{-itx}\psi(t)\deriv t$, hence
\begin{align*}
\hat q_\mu'(\hatth,p) &= -\fr{1}{n\gamma_n^2}\sum_{j\in[n]} Y_j (2\pi)^{-1} \int (-it) e^{-it(g(\hatth^\intercal X_j)-p)/\gamma_n}\psi(t)\deriv t \\
		&= (2\pi)^{-1}\int \left(\nsumn{j} Y_j e^{-itg(\hatth^\intercal X_j)} \right) e^{itp} it\psi(\gamma_n t)\deriv t,
\end{align*}
where we used a change of variables and Fubini's theorem (the integral is bounded as $|Y_j|<\ba y$ almost surely by \Cref{ass:covar_outcome_distribution}\ref{ass:covar_outcome_distribution:outcome}, $|it|\leq 1$, $\sup_{t\in\real}|\psi(t)|<\infty$ and the exponential is bounded too). As $|e^{itp}|\leq1$ and $|it|\leq |t|$, we have that
\begin{align*}
\Ebc{\sup_{p\in A_n}\abs{\hat q_\mu'(\hatth,p)-\Ebc{\hat q_\mu'(\hatth,p)}{\hatth}}}{\hatth}\\
 \leq (2\pi)^{-1} \int \Ebc{\abs{\left(\nsumn{j} Y_j e^{-itg(\hatth^\intercal X_j)} \right)-\Ebc{Y_j e^{-itg(\hatth^\intercal X_j)}}{\hatth}}}{\hatth} |t| |\psi(\gamma_n t)| \deriv t.
\end{align*}
As $e^{-ia}=\cos(a)-i\sin(a)$ for $a\in\real$, and $\E |W|\leq \sqrt{\E W^2}$ for any square-integrable random variable $W$, the expectation on the right of the last display is bounded by 
\begin{align*}
\Vbc{\nsumn{j} Y_j\cos(g(\hatth^\intercal X_j))}{\hatth}^{1/2}+\Vbc{\nsumn{j} Y_j\sin(g(\hatth^\intercal X_j))}{\hatth}^{1/2} \leq 2 \sqrt{\fr{\E Y_j^2}{n}},
\end{align*}
where we used that by \Cref{ass:theta_estimator}\ref{ass:theta_estimator:indep} the elements in the sum are i.i.d. given $\hatth$, so the covariances are zero, and that $\Vbc{Y_j\cos(g(\hatth^\intercal X_j))}{\hatth}\leq \Ebc{Y_j^2}{\hatth}=\E Y_j^2$. Thus,
\begin{align*}
\Ebc{\sup_{p\in A_n}\abs{\hat q_\mu'(\hatth,p)-\Ebc{\hat q_\mu'(\hatth,p)}{\hatth}}}{\hatth}
 &\leq \sqrt{\fr{\E Y_j^2}{\pi^2 n}}\int |t||\psi(\gamma_n t)|\deriv t \\
 &\leq \sqrt{\fr{\E Y_j^2}{\pi^2 n \gamma_n^4}}\int |t||\psi(t)|\deriv t. 
\end{align*}
As $\E Y_j^2<\infty$ by \Cref{ass:covar_outcome_distribution}\ref{ass:covar_outcome_distribution:outcome} and $\int |t||\psi(t)|\deriv t=\int |t|e^{-t^2/2}<\infty$ for the Gaussian $K$, the right side is of the order $1/(\gamma_n^2\sqrt{n})=(\kappa_0)^{-2}n^{2\beta-1/2}=\smallO{1}$ for $\beta<1/4$.

Next, we show \eqref{eq:hatq_prime_2}. As the summands are identically distributed given $\hatth$ by  \Cref{ass:theta_estimator}\ref{ass:theta_estimator:indep}, the tower property of expectations gives
\begin{align*}
\Ebc{\hat q_\mu'(\hatth,p)}{\hatth} & = -\fr{1}{\gamma_n^2}\Ebc{\Ebc{Y}{g(\hatth^\intercal X),\hatth}K'((g(\hatth X)-p)/\gamma_n)}{\hatth} \\
	&= -\fr{1}{\gamma_n^2}\Ebc{\mu(\hatth, g(\hatth^\intercal X))K'((g(\hatth X)-p)/\gamma_n)}{\hatth} \\
	&= -\fr{1}{\gamma_n^2} \int_{\uphatth}^{\bphatth} \mu(\hatth,\tld p)K'((\tld p-p)/\gamma_n)h(\hatth,\tld p)\deriv\tld p \\
	&= -\fr{1}{\gamma_n^2}\int_{\uphatth}^{\bphatth} q_\mu(\hatth,\tld p)K'((\tld p-p)/\gamma_n)\deriv\tld p \\
	&= -\fr{1}{\gamma_n} \int_{(\uphatth-p)/\gamma_n}^{(\bphatth-p)/\gamma_n} q_\mu(\hatth,\gamma_nv+p)K'(v)\deriv v 
\end{align*}
by definition of $h(\hatth,\cdot)$ as the density of $(g(\hatth^\intercal X)\mid \hatth)$ under Assumptions \ref{ass:theta_estimator}\ref{ass:theta_estimator:indep} and \ref{ass:singleindex_propscore_smooth_outcome}\ref{ass:singleindex_propscore_smooth_outcome:propscore}, and $q_\mu=\mu h$. Integration by parts gives
\begin{align}
\Ebc{\hat q_\mu'(\hatth,p)}{\hatth}  =&\, -\fr{1}{\gamma_n}\left\{q(\hatth, \bphatth)K((\bphatth-p)/\gamma_n)- q(\hatth, \uphatth)K((\uphatth-p)/\gamma_n)\right\} \label{eq:expected_qhat_1} \\
&+ \int_{(\uphatth-p)/\gamma_n}^{(\bphatth-p)/\gamma_n} q_\mu'(\hatth,\gamma_nv+p)K(v) \deriv v.  \label{eq:expected_qhat_2}
\end{align}
As $q_\mu(\hatth,\cdot)$ is bounded for $\hatth\in\ntheta$, with probability tending to one \eqref{eq:expected_qhat_1} is of the order 
$$\gamma_n^{-1}\left[\sup_{p\in A_n}K((\bphatth-p)/\gamma_n)+\sup_{p\in A_n}K((\uphatth-p)/\gamma_n)\right]=2\gamma_n^{-1}K(a_n/\gamma_n).$$
As $\gamma_n^{-1}K(a_n/\gamma_n)\to 0$, \eqref{eq:expected_qhat_1} is $\smallOP{1}$. Now we show that \eqref{eq:expected_qhat_2} converges uniformly to $q_\mu'(\hatth,p)$ adapting the proof of  \citet[Lemma 1]{schuster_contributions_1979}. The kernel $K$ being a density integrating to one implies
\begin{align*}
q_\mu'(\hatth,p) & =q_\mu'(\hatth,p)\gamma_n^{-1} \left\{ \int_{-\infty}^{p-\bphatth}  K(u/\gamma_n)\deriv u +  \int_{p-\bphatth}^{p-\uphatth}  K(u/\gamma_n)\deriv u + \int_{p-\uphatth}^\infty K(u/\gamma_n)\deriv u \right\}.
\end{align*}
Combine this with a change of variables in \eqref{eq:expected_qhat_2} (with $u\ceq -\gamma_n v$ noting that $K$ is symmetric about zero), to get 
\begin{align}
\sup_{p\in A_n} \abs{\int_{(\uphatth-p)/\gamma_n}^{(\bphatth-p)/\gamma_n} q_\mu'(\hatth,\gamma_nv+p)K(v) \deriv v - q_\mu'(\hatth,p)} \nonumber \\
\leq \sup_{p\in A_n} \abs{\int_{-\infty}^{p-\bphatth}q_\mu'(\hatth,p)\gamma_n^{-1}K(u/\gamma_n)\deriv u} 
+\sup_{p\in A_n} \abs{\int_{p-\uphatth}^\infty q_\mu'(\hatth,p)\gamma_n^{-1}K(u/\gamma_n)\deriv u} \label{eq:expected_qhat_1_1} \\
+\sup_{p\in A_n} \abs{\int_{p-\bphatth}^{p-\uphatth} [q_\mu'(\hatth,p-u)-q_\mu'(\hatth,p)]\gamma_n^{-1}K(u/\gamma_n)\deriv u} \label{eq:expected_qhat_1_2}.
\end{align}
The two terms in \eqref{eq:expected_qhat_1_1} are $\smallOP{1}$. The first one is bounded by 
$$\sup_{p\in A_n}|q_\mu'(\hatth,p)|\sup_{p\in A_n}\int_{-\infty}^{(p-\bphatth)/\gamma_n}K(v)\deriv v=\sup_{p\in A_n}|q_\mu'(\hatth,p)|\sup_{p\in A_n}\int_{-\infty}^{-a_n/\gamma_n}K(v)\deriv v,$$
which vanishes as, on one hand, $\sup_{p\in A_n}|q_\mu'(\hatth,p)|<\infty$ with probability tending to one as $\hatth\convprob \theta_0$ by Assumptions \ref{ass:propscoredist_estimated}\ref{ass:propscoredist_estimated:differentiability_p_and_theta} and \ref{ass:differentiability_regression}, and, on the other hand, $a_n/\gamma_n\to \infty$ implies that the integral of the Gaussian $K$ goes to zero. The second term in \eqref{eq:expected_qhat_1_1} is bounded by
$$\sup_{p\in A_n}|q_\mu'(\hatth,p)|\sup_{p\in A_n}\int_{(p-\uphatth)/\gamma_n}^\infty K(v)\deriv v=\sup_{p\in A_n}|q_\mu'(\hatth,p)|\sup_{p\in A_n}\int_{a_n/\gamma_n}^\infty K(v)\deriv v,$$
so it is also $\smallOP{1}$ by the same argument. To show that \eqref{eq:expected_qhat_1_2} also vanishes, let  $\rho_n>0$ be a sequence satisfying $\rho_n/\gamma_n\to\infty$ and $\rho_n< a_n$, i.e. $\gamma_n\ll \rho_n < a_n$ (as $\gamma_n=\kappa_0n^{-\beta}$ and $a_n=\kappa_1n^{-\alpha}$, $\beta>\alpha$, we can take $\rho_n=\kappa_2 n^{-(\beta+\alpha)/2}$, $0<\kappa_2<\kappa_1$). Then \eqref{eq:expected_qhat_1_2} is bounded by
\begin{align}
\sup_{p\in A_n}\abs{\int_{[p-\bphatth, p-\uphatth]\cap \set{|u|\leq\rho_n}} [q_\mu'(\hatth,p-u)-q_\mu'(\hatth,p)]\gamma_n^{-1}K(u/\gamma_n)\deriv u} \label{eq:expected_qhat_1_2_1} \\
+ \sup_{p\in A_n}\abs{\int_{[p-\bphatth, p-\uphatth]\cap \set{|u|>\rho_n}} [q_\mu'(\hatth,p-u)-q_\mu'(\hatth,p)]u^{-1} u\gamma_n^{-1}K(u/\gamma_n)\deriv u}.\label{eq:expected_qhat_1_2_2} 
\end{align}
Here, \eqref{eq:expected_qhat_1_2_1} is bounded by 
\begin{align*}
\sup_{p\in A_n}\sup_{|u|\leq \rho_n}|q_\mu'(\hatth,p-u)-q_\mu'(\hatth,p)|\sup_{p\in A_n}\int_{-\infty}^{\infty} \gamma_n^{-1}K(u/\gamma_n) \deriv u \\
\leq \sup_{p\in A_n}\sup_{|u|\leq \rho_n}|q_\mu'(\hatth,p-u)-q_\mu'(\hatth,p)|.
\end{align*}
By Assumptions \ref{ass:propscoredist_estimated}\ref{ass:propscoredist_estimated:differentiability_p_and_theta} and \ref{ass:differentiability_regression}, $q_\mu'(\hatth,\cdot)$ is continuous on the compact set $[\uphatth,\bphatth]\supset A_n$ and is therefore uniformly continuous. Thus, $|u|\leq\rho_n$ for a small enough $\rho_n<a_n$ implies that $p-u,p\in [\uphatth,\bphatth]$ for all $p\in A_n$. Hence, $\sup_{p\in A_n}\sup_{|u|\leq \rho_n}|q_\mu'(\hatth,p-u)-q_\mu'(\hatth,p)|=\smallOP{1}$. In the integral of \eqref{eq:expected_qhat_1_2_2}, $u\in[p-\bphatth, p-\uphatth]$, so $p-u\in [\uphatth,\bphatth]$, and thus \eqref{eq:expected_qhat_1_2_2} is bounded by $2\sup_{p\in[\uphatth,\bphatth]}|q_\mu'(\hatth, p)|$ times 
\begin{align*}
\sup_{p\in A_n}\abs{\int_{[p-\bphatth, p-\uphatth]\cap \set{|u|>\rho_n}}\fr{K(u/\gamma_n)}{\gamma_n}\deriv u}  &\leq \sup_{p\in A_n}\int_{\set{|u|>\rho_n}}\fr{K(u/\gamma_n)}{\gamma_n}\deriv u \\
& \leq \int_{\set{|v|>\rho_n/\gamma_n}} K(v) \deriv v,
\end{align*}
where we used that $K>0$. As $\gamma_n\ll \rho_n$, and $K(v)\downarrow 0$ as $|v|\to \infty$, the right integral tends to zero. As Assumptions \ref{ass:propscoredist_estimated}\ref{ass:propscoredist_estimated:differentiability_p_and_theta} and \ref{ass:differentiability_regression} control $\sup_{p\in[\uphatth,\bphatth]}|q_\mu'(\hatth, p)|$, \eqref{eq:expected_qhat_1_2_2} is $\smallOP{1}$. Thus, \eqref{eq:hatq_prime_2} holds. Conclude that \eqref{eq:hatq_prime} holds.

\emph{Condition \eqref{eq:derivhat_p}, part \eqref{eq:hath_prime}.} Follows directly along the lines of \eqref{eq:hatq_prime}, setting $Y_j\ceq 1$ for all $j\in[n]$, in the formulae of \eqref{eq:hatq_prime}.

\emph{Condition \eqref{eq:derivhat_p}, part \eqref{eq:hatq}.} Analogously to $\hat q_\mu'$ above, we can write 
\begin{align*}
\hat q_\mu(\hatth,p) &= \fr{1}{n\gamma_n}\sum_{j\in[n]} Y_j (2\pi)^{-1}\int e^{-it(g(\hatth^\intercal X_j)-p)/\gamma_n}\psi(t)\deriv t \\
& = (2\pi)^{-1}\int \left(\nsumn{j} Y_j e^{-itg(\hatth^\intercal X_j)} \right) e^{itp} \psi(\gamma_n t)\deriv t,
\end{align*}
and then 
\begin{align*}
\Ebc{\sup_{p\in A_n}\abs{\hat q_\mu(\hatth,p)-\Ebc{\hat q_\mu(\hatth,p)}{\hatth}}}{\hatth}
 \leq \sqrt{\fr{\E Y_j^2}{\pi^2 n}}\int |\psi(\gamma_n t)|\deriv t \leq \sqrt{\fr{\E Y_j^2}{\pi^2 n \gamma_n^2}}\int |\psi(t)|\deriv t. 
\end{align*}
\Cref{ass:covar_outcome_distribution}\ref{ass:covar_outcome_distribution:outcome} and $\int |\psi(t)|\deriv t=\int e^{-t^2/2}=2\pi$ for the Gaussian kernel $K$ mean that the right side is of the order $1/(\gamma_n\sqrt{n})=(\kappa_0)^{-1}n^{\beta-1/2}=\smallO{1}$ for $\beta<1/4$. Next, as for $\hat q_\mu'$, 
\begin{align*}
\Ebc{\hat q_\mu(\hatth,p)}{\hatth} & = \fr{1}{\gamma_n} \int_{\uphatth}^{\bphatth} \mu(\hatth,\tld p)K((\tld p-p)/\gamma_n)h(\hatth,\tld p)\deriv\tld p \\
	&= \fr{1}{\gamma_n}\int_{\uphatth}^{\bphatth} q_\mu(\hatth,\tld p)K((\tld p-p)/\gamma_n)\deriv\tld p 
	= \int_{(\uphatth-p)/\gamma_n}^{(\bphatth-p)/\gamma_n} q_\mu(\hatth,\gamma_nv+p)K(v)\deriv v.
\end{align*}
This converges uniformly to $q_\mu(\hatth,p)$ in $p\in A_n$ by the same arguments as \eqref{eq:expected_qhat_2} does to $q_\mu'(\hatth,p)$, given \Cref{ass:propscoredist_estimated,ass:differentiability_regression} ensuring the boundedness and continuity of $q_\mu(\hatth,\cdot)$ with probability tending to one as $\hatth\convprob \theta_0$.

\textit{Condition \eqref{eq:derivhat_p}, part \eqref{eq:hath}.} Follows from \eqref{eq:hatq} by setting $Y_j\ceq 1$ for all $j\in[n]$.

\emph{Condition \eqref{eq:derivhat_theta}.} To establish the uniform convergence of 
\begin{align}
\widehat{\left(\fr{\partial \mu}{\partial \theta_k}\right)}(\hatth,p)\ceq \fr{\qdotkhat(\hatth,p)\hat h(\hatth,p)-\hat q_{\mu}(\hatth,p)\hdotkhat(\hatth,p)}{(\hat h(\hat\theta,p))^2}
\end{align}
to 
\begin{align}
\fr{\partial \mu}{\partial \theta_k}(\hatth,p)\ceq \fr{\qdotk(\hatth,p)h(\hatth,p)-\hat q_{\mu}(\hatth,p)\hdotk(\hatth,p)}{(\hat h(\hat\theta,p))^2}
\end{align}
in $p\in A_n$, we can follow the same steps which led to \eqref{eq:hatq_prime}--\eqref{eq:hath} of Condition \eqref{eq:derivhat_p}, because Assumptions \ref{ass:propscoredist_estimated}\ref{ass:propscoredist_estimated:differentiability_p_and_theta} and \ref{ass:differentiability_regression} ensure that with probability tending to one, $\norm{A_n}{h}$, $\norm{A_n}{\hdotk}$, $\norm{A_n}{q_\mu}$, $\norm{A_n}{\qdotk}$ are all bounded. As \eqref{eq:hatq} and \eqref{eq:hath} were proved above, it is sufficient to show both
\begin{align}
\norm{A_n}{\qdotkhat-\qdotk} &\convprob 0 \label{eq:hatq_dot}\quad\text{ and }\\
\norm{A_n}{\hdotkhat-\hdotk} &\convprob 0 \label{eq:hath_dot}.
\end{align}

\emph{Condition \eqref{eq:derivhat_theta}, part \eqref{eq:hatq_dot}.} We proceed by showing
\begin{align}
\E \Ebc{\sup_{p\in A_n}\abs{\qdotkhat(\hatth,p)-\Ebc{\qdotkhat(\hatth,p)}{\hatth}}}{\hatth} &\to 0 \quad\text{ and } \label{eq:hatq_dot_1} \\
\sup_{p\in A_n}\abs{\Ebc{\qdotkhat(\hatth,p)}{\hatth}-\qdotk(\hatth,p)} &\convprob 0. \label{eq:hatq_dot_2}
\end{align}
As for $\hat q_\mu'$ above, 
\begin{align}
\qdotkhat(\hatth,p) &= \fr{(\ginv)'(p)}{n\gamma_n^2}\sum_{j\in[n]}Y_jX_{j,k}(2\pi)^{-1}\int (-it)e^{-it (\hatth^\intercal X_j-\ginv(p))/\gamma_n}\psi(t)\deriv t \\
	&=(\ginv)'(p)(2\pi)^{-1}\int \left(\nsumn{j} Y_jX_{j,k}e^{-it\hatth^\intercal X_j}\right)e^{it\ginv(p)}(-it)\psi(\gamma_n t)\deriv t.
\end{align}
Since $|-it|\leq|t|$ and $\mathcal{X}$ is bounded by \Cref{ass:covar_outcome_distribution}, Euler's formula and \Cref{ass:theta_estimator}\ref{ass:theta_estimator:indep} imply that the bound of \eqref{eq:hatq_prime_1} also apply here up to a constant, which bounds $\mathcal{X}$, times $\sup_{p\in A_n}|(\ginv)'(p)|$. As $\sup_{p\in A_n}|(\ginv)'(p)|\leq 1/\supnorm{g'}<\infty$ by \Cref{ass:singleindex_propscore_smooth_outcome}, \eqref{eq:hatq_dot_1} holds.

\emph{Condition \eqref{eq:derivhat_theta}, part \eqref{eq:hatq_dot}, \eqref{eq:hatq_dot_2}.} We begin by deriving $\qdotk(\hatth,p)=((\partial /\partial \theta_k)(\mu h))(\hatth,p)$. For simplicity, assume that we only have two covariates, both continuous, and we are interested in the derivative with respect to the first coordinate of $\theta$ ($k\ceq 1$). It is straightforward to generalise the arguments below for the general case. By the tower property of expectation,
$$\mu(\theta,p)=\Ebc{Y}{\theta^\intercal X=\ginv(p)}=\Ebc{m(X)}{\theta^\intercal X=\ginv(p)}.$$
In view of \eqref{eq:exph_givent} of \Cref{prop:admissible_models_estimated_propscore}, we can write
\begin{align}
\Ebc{m(X)}{\theta^\intercal X=t} &= \fr{1}{v(\theta,t)} \int_{\mathcal{X}_1} m\left(x_1,\fr{t-\theta_1x_1}{\theta_2}\right)\Xdensity \left(x_1,\fr{t-\theta_1x_1}{\theta_2}\right)\deriv x_1 \label{eq:expectedm_condt} \\
v(\theta, t) &\ceq \int_{\mathcal{X}_1}\Xdensity \left(x_1,\fr{t-\theta_1x_1}{\theta_2}\right)\deriv x_1, \label{eq:v_density}
\end{align}
where $v(\theta,\cdot)$ is the density of $\theta^\intercal X$ satisfying $v(\theta,\cdot)>0$ by \Cref{ass:covar_outcome_distribution}. Thus, $h(\theta,\cdot)$, being the density of $g(\theta^\intercal X)$, is equal to
\begin{align}
h(\theta,p) = (\ginv)'(p)\int_{\mathcal{X}_1}\Xdensity \left(x_1,\fr{\ginv(p)-\theta_1x_1}{\theta_2}\right)\deriv x_1 \label{eq:h_formula}.
\end{align}
By \Cref{ass:theta_estimator}\ref{ass:theta_estimator:indep}, \eqref{eq:expectedm_condt} and \eqref{eq:h_formula} remain valid once we replace $\theta$ with $\hatth$. It then follows for continuously differentiable $\Xdensity$ and $m$ (\Cref{ass:singleindex_propscore_smooth_outcome,ass:covar_outcome_distribution}), that for $k=1$,
\begin{align}
q_\mu(\theta,p) &= \mu(\theta,p)h(\theta,p)  \nonumber \\
&=(\ginv)'(p) \int_{\mathcal{X}_1} m\left(x_1,\fr{\ginv(p)-\theta_1x_1}{\theta_2}\right)\Xdensity \left(x_1,\fr{\ginv(p)-\theta_1x_1}{\theta_2}\right)\deriv x_1 \nonumber \\
\qdotk(\hatth,p) &= (\ginv)'(p) \int_{\mathcal{X}_1}  \fr{\deriv }{\deriv \theta_1}\left[ m\left(x_1,\fr{\ginv(p)-\hatth_1x_1}{\hatth_2}\right)\Xdensity \left(x_1,\fr{\ginv(p)-\hatth_1x_1}{\hatth_2}\right)\right]\deriv x_1\nonumber \\
	&= \fr{\deriv }{\deriv p}\int_{\mathcal{X}_1} -x_1 m\left(x_1,\fr{\ginv(p)-\hatth_1x_1}{\hatth_2}\right)\Xdensity \left(x_1,\fr{\ginv(p)-\hatth_1x_1}{\hatth_2}\right)\deriv x_1, \label{eq:qdot_formula}
\end{align}
where in the last step we used that $(\ginv)'>0$. We proceed by showing the desired convergence. Let $[\uthatth, \bthatth]\ceq [\ginv(\uphatth), \ginv(\bphatth)]$. We have for $k=1$ by the tower property
\begin{align*}
\Ebc{\qdotkhat(\hatth,p)}{\hatth} = \fr{\ginv(p)}{\gamma_n^2} \Ebc{Y_j X_{j,k}K'((\hatth^\intercal X_j-\ginv(p)/\gamma_n)}{\hatth} \\
	= \fr{(\ginv)'(p)}{\gamma_n^2} \Ebc{\Ebc{m(X_j) X_{j,1}}{\hatth^\intercal X_j, \hatth}K'((\hatth^\intercal X_j-\ginv(p))/\gamma_n)}{\hatth} \\
	= \fr{(\ginv)'(p)}{\gamma_n^2} \int_{\uthatth}^{\bthatth} K'\left(\fr{t-\ginv(p)}{\gamma_n}\right) \int_{\mathcal{X}_1} x_1 m\left(x_1,\fr{t-\hatth_1x_1}{\hatth_2}\right)\Xdensity \left(x_1,\fr{t-\hatth_1x_1}{\hatth_2}\right)\deriv x_1\, \deriv t \\
	=(\ginv)'(p)\\
	\times \int_{\mathcal{X}_1} x_1 \left[\fr{1}{\gamma_n^{2}}\int_{\uthatth}^{\bthatth} K'\left(\fr{t-\ginv(p)}{\gamma_n}\right) m\left(x_1,\fr{t-\hatth_1x_1}{\hatth_2}\right)\Xdensity \left(x_1,\fr{t-\hatth_1x_1}{\hatth_2}\right)\deriv t \right] \deriv x_1,
\end{align*}
where we used that \eqref{eq:expectedm_condt} holds not only for $m$, but for any generic function of $X$ by \Cref{prop:admissible_models_estimated_propscore}. Let $\lambda_n(x_1, t)\ceq m\left(x_1,\fr{t-\hatth_1x_1}{\hatth_2}\right)\Xdensity \left(x_1,\fr{t-\hatth_1x_1}{\hatth_2}\right)$. We show that the term in the square brackets converges to 
$$-\fr{\partial }{\partial t} \lambda_n(x_1,\ginv(p))=-\left(\fr{\deriv }{\deriv p} \lambda_n(x_1,\ginv(p))\right)/[(\ginv)'(p)]$$ uniformly in $x_1\in\mathcal{X}_1$ and $p\in A_n$, which completes the proof for \eqref{eq:hatq_dot_2} in light of \eqref{eq:qdot_formula}. Put $\lambda_n'(x_1,t)\ceq \fr{\partial }{\partial t} \lambda_n(x_1,t)$. Integration by parts gives
\begin{align}
\fr{1}{\gamma_n^{2}}\int_{\uthatth}^{\bthatth} K'\left(\fr{t-\ginv(p)}{\gamma_n}\right)\lambda_n(x_1,t)\deriv t = \fr{1}{\gamma_n}\int_{(\uthatth-\ginv(p))/\gamma_n}^{(\bthatth-\ginv(p))/\gamma_n} K'(v)\lambda_n(x_1,\gamma_n v+ \ginv(p))\deriv t \nonumber \\
= \fr{1}{\gamma_n}\left\{\lambda_n(x_1, \bthatth)K((\bthatth-\ginv(p))/\gamma_n)- \lambda_n(x_1, \uthatth)K((\uthatth-\ginv(p))/\gamma_n)\right\} \label{eq:lambda_1} \\
- \int_{(\uthatth-\ginv(p))/\gamma_n}^{(\bthatth-\ginv(p))/\gamma_n} \lambda_n'(x_1,\gamma_n v+\ginv(p))K(v) \deriv v.  \label{eq:lambda_2}
\end{align}
Now $\lambda_n$ is bounded with probability tending to one by \Cref{ass:covar_outcome_distribution}. Recall that $\uthatth=\ginv(\uphatth), \bthatth=\ginv(\bphatth)$. As $\ginv$ is increasing, and $K$ reaches its maximum at zero and satisfies $\lim_{|u|\to\infty}K(u)\to0$, \eqref{eq:lambda_1} is of the order
\begin{align}
\gamma_n^{-1}\left[\sup_{p\in A_n}K\left(\fr{\ginv(\bphatth)-\ginv(p)}{\gamma_n}\right)+ \sup_{p\in A_n}K\left(\fr{\ginv(\uphatth)-\ginv(p)}{\gamma_n}\right) \right] \nonumber \\
= \gamma_n^{-1}\left[K\left(\fr{\ginv(\bphatth)-\ginv(\bphatth-a_n)}{\gamma_n}\right)+ K\left(\fr{\ginv(\uphatth)-\ginv(\uphatth+a_n)}{\gamma_n}\right) \right]. \label{eq:lamdba_1_orderbound}
\end{align}
By a mean-value expansion, the first term in \eqref{eq:lamdba_1_orderbound} is of the order
$$\gamma_n^{-1}K\left(\left(\inf_{p\in[\uphatth,\bphatth]}(\ginv)'(p)\right)a_n/\gamma_n\right).$$ 
As $\inf_{p\in[\uphatth,\bphatth]}(\ginv)'(p)>0$, this is $\smallOP{1}$ for $a_n/\gamma_n\to\infty$ and Gaussian $K$. The same applies to the second term of \eqref{eq:lamdba_1_orderbound}, so \eqref{eq:lambda_1} is $\smallOP{1}$. Last, we show that \eqref{eq:lambda_2} converges to $-\lambda_n'(x_1,\ginv(p))$. Again, we can write
\begin{align*}
\lambda_n'(x_1,\ginv(p)) =  \fr{\lambda_n'(x_1,\ginv(p))}{\gamma_n} \\
\times  \left\{ \int_{-\infty}^{\ginv(p)-\bthatth}  K(u/\gamma_n)\deriv u +  \int_{\ginv(p)-\bthatth}^{p-\uthatth}  K(u/\gamma_n)\deriv u + \int_{\ginv(p)-\uthatth}^\infty K(u/\gamma_n)\deriv u \right\}.
\end{align*}
A change of variables in \eqref{eq:lambda_2} ($u\ceq-\gamma_n v$) and $K$ being symmetric about zero give
\begin{align}
\sup_{x_1\in\mathcal{X}_1, p\in A_n} \abs{\int_{(\uthatth-\ginv(p))/\gamma_n}^{(\bthatth-\ginv(p))/\gamma_n} \lambda_n'(x_1,\gamma_n v+\ginv(p))K(v) \deriv v - \lambda_n'(x_1,\ginv(p))} \nonumber \\
\leq \sup_{x_1\in\mathcal{X}_1, p\in A_n} \abs{\int_{-\infty}^{\ginv(p)-\bthatth}\lambda_n'(x_1,\ginv(p))\gamma_n^{-1}K(u/\gamma_n)\deriv u} \label{eq:lambda_2_1_1} \\
+\sup_{x_1\in\mathcal{X}_1, p\in A_n} \abs{\int_{\ginv(p)-\uthatth}^\infty \lambda_n'(x_1,\ginv(p))\gamma_n^{-1}K(u/\gamma_n)\deriv u} \label{eq:lambda_2_1_2} \\
+\sup_{x_1\in\mathcal{X}_1, p\in A_n} \abs{\int_{\ginv(p)-\bthatth}^{\ginv(p)-\uthatth} [\lambda_n'(x_1,\ginv(p)-u)-\lambda_n'(x_1,\ginv(p))]\gamma_n^{-1}K(u/\gamma_n)\deriv u} \label{eq:lambda_2_2}.
\end{align}
Since $\mathcal{X}_1$ is bounded by \Cref{ass:covar_outcome_distribution},  $\sup_{x_1\in\mathcal{X}_1, p\in A_n}|\lambda_n'(x_1,\ginv(p))|$ is bounded with probability tending to one. Hence, as $\ginv$ is increasing, $K\geq0$ and $\bthatth=\ginv(\bphatth)$,  \eqref{eq:lambda_2_1_1} is of the order $\int_{-\infty}^{(\ginv(\bphatth-a_n)-\ginv(p))/\gamma_n}K(v)\deriv v$. Here, 
$$(\ginv(\bphatth-a_n)-\ginv(p))/\gamma_n\leq (\inf_{p\in[0,1]}(\ginv)'(p))(-a_n/\gamma_n)\leq (1/\supnorm{g'})(-a_n/\gamma_n)$$
 by a mean-value expansion. As the infimum is positive, \eqref{eq:lambda_2_1_1} is $\smallOP{1}$ for $a_n/\gamma_n\to\infty$, and so is \eqref{eq:lambda_2_1_2} by similar arguments. The term \eqref{eq:lambda_2_2} can be treated analogously to \eqref{eq:expected_qhat_1_2}. First note that taking the supremum over $p\in A_n$ is the same as taking the supremum over $t\in[\ginv(\uphatth+a_n), \ginv(\bphatth-a_n)]\eqc T_n$ as $\ginv$ is increasing. Take a $\gamma_n\ll \rho_n<(1/\supnorm{g'})a_n$ (e.g. $\rho_n\ceq (1/\supnorm{g'})\kappa_2 n^{-(\alpha+\beta)/2}$ for some $0<\kappa_2<\kappa_1$). Then \eqref{eq:lambda_2_2} is bounded by
\begin{align}
\sup_{x_1\in\mathcal{X}_1, t\in T_n} \abs{\int_{[t-\bthatth, t-\uthatth]\cap\set{|u|\leq\rho_n}} [\lambda_n'(x_1,t-u)-\lambda_n'(x_1,t)]\gamma_n^{-1}K(u/\gamma_n)\deriv u} \label{eq:lambda_2_2_1} \\
\sup_{x_1\in\mathcal{X}_1, t\in T_n} \abs{\int_{[t-\bthatth, t-\uthatth]\cap\set{|u|>\rho_n}} [\lambda_n'(x_1,t-u)-\lambda_n'(x_1,t))]\gamma_n^{-1}K(u/\gamma_n)\deriv u}.  \label{eq:lambda_2_2_2}
\end{align}
Here, \eqref{eq:lambda_2_2_1} is bounded by 
\begin{align*}
\sup_{x_1\in\mathcal{X}_1,t\in T_n}\sup_{|u|\leq\rho_n}|\lambda_n'(x_1, t-u)-\lambda_n'(x_1,t)|\sup_{x_1\in\mathcal{X}_1,t\in T_n}\int_{-\infty}^\infty \gamma_n^{-1}K(u/\gamma_n)\deriv u\\
\leq \sup_{x_1\in\mathcal{X}_1,t\in T_n}\sup_{|u|\leq\rho_n}|\lambda_n'(x_1, t-u)-\lambda_n'(x_1,t)|.
\end{align*}
By \Cref{ass:covar_outcome_distribution,ass:singleindex_propscore_smooth_outcome}, $\lambda_n'(x_1,\cdot)$ is continuous uniformly in $x_1$ and is therefore uniformly continuous on the compact set $[\ginv(\uphatth), \ginv(\bphatth)]\supset [\ginv(\uphatth+a_n), \ginv(\bphatth-a_n)]$. Thus, $|u|\leq\rho_n$ for a small enough $\rho_n$ implies that $t-u,t\in [\ginv(\uphatth), \ginv(\bphatth)]$ for all $t\in T_n$. (Note that $\rho_n$ is small enough if and only if $|u|\leq\rho_n$ implies 
$$|u|\leq \min\set{\ginv(\uphatth+a_n)-\ginv(\uphatth), \ginv(\bphatth)-\ginv(\bphatth-a_n)}.$$ 
By a mean-value expansion, the right side is smaller than or equal to $\inf_{p\in[0,1]}(\ginv)'(p)a_n\leq (1/\supnorm{g'})a_n$. But then $\rho_n<(1/\supnorm{g'})a_n$ is small enough.) As a consequence, 
$$\sup_{x\in\mathcal{X}_1,t\in T_n}\sup_{|u|\leq\rho_n}|\lambda_n'(x_1, t-u)-\lambda_n'(x_1,t)|=\smallOP{1},$$ hence \eqref{eq:lambda_2_2_1} is $\smallOP{1}$. As $2\sup_{x_\in\mathcal{X}_1, t\in[\uthatth,\bthatth]}|\lambda_n(x_1,t)|$ is bounded by probability tending to one as $\hatth_2\convprob \theta_{0,2}\neq 0$ by \Cref{ass:covar_outcome_distribution,ass:singleindex_propscore_smooth_outcome}, \eqref{eq:lambda_2_2_2} can be shown to be $\smallOP{1}$ similarly to \eqref{eq:expected_qhat_1_2_2}. Thus, \eqref{eq:lambda_2_2} is $\smallOP{1}$ which shows the desired convergence of \eqref{eq:lambda_2}. Hence, \eqref{eq:hatq_dot_2} holds for $k=1$. The case for $k=2$ follows by writing 
\begin{align*}
\Ebc{m(X)}{\theta^\intercal X=t} &= (1/v(\theta,t)) \int_{\mathcal{X}_2} m\left(\fr{t-\theta_2x_2}{\theta_1},x_2\right)\Xdensity \left(\fr{t-\theta_2x_2}{\theta_1},x_2\right)\deriv x_2 \\
v(\theta, t) &\ceq \int_{\mathcal{X}_2}\Xdensity \left(\fr{t-\theta_2x_2}{\theta_1},x_2\right)\deriv x_2
\end{align*}
instead of \eqref{eq:expectedm_condt} and \eqref{eq:v_density}. By \Cref{ass:covar_outcome_distribution}, these behave equally well. Conclude that the desired convergence of $\qdotkhat$ (Condition \eqref{eq:derivhat_theta}, part \eqref{eq:hatq_dot}) holds.

\emph{Condition \eqref{eq:derivhat_theta}, part \eqref{eq:hath_dot}.}\, Follows along the same lines as \eqref{eq:hatq_dot} by setting $Y_j\ceq 1$ for all $j\in[n]$. Conclude that Condition \eqref{eq:derivhat_theta} holds.

\emph{Remaining Uniform Consistency Results.} Showing
\begin{align}
\sup_{p\in A_n}|\hat\mu(\hatth,p)-\mu(\hatth,p)|\convprob0, \label{eq:muhat} \\
\sup_{p\in A_n}|\hat\mu_2(\hatth,p)-\mu_2(\hatth,p)|\convprob0 \label{eq:mu2hat}
\end{align}
completes the proof of \Cref{prop:consistent_variance}. As $h(\hat\theta,\cdot)$ is bounded away from zero with probability tending to one by \Cref{ass:propscoredist_estimated}, \eqref{eq:muhat} is implied by \eqref{eq:hatq} and \eqref{eq:hath}. Likewise, \eqref{eq:mu2hat} is implied by $\norm{A_n}{\hat q_{\mu_2}-q_{\mu_2}}\convprob 0$, which can be shown as \eqref{eq:hatq}.
\end{proof}

\end{appendix}

\newpage
{\scriptsize%
\bibliography{../idea1.bib}
\bibliographystyle{plainnat}
}

\end{document}